\documentclass[12pt, leqno]{amsart}  

\usepackage{setspace}
\usepackage{graphicx}
\usepackage{amssymb,amsmath,amsthm,amscd,accents}
\usepackage{stackengine,scalerel}
\usepackage{mathrsfs}
\usepackage{lscape}
\usepackage{enumerate}
\usepackage{upgreek}
\usepackage{stmaryrd}
\usepackage[usenames,dvipsnames]{color}
\usepackage[colorlinks=true, pdfstartview=FitV,
 linkcolor=blue,citecolor=blue,urlcolor=blue]{hyperref}
\usepackage{tikz}
\tikzset{dynkdot/.style={circle,draw,scale=.38}}
\usetikzlibrary{arrows.meta,arrows}
\usepackage{bm}
\usepackage{marginnote}
\usepackage{verbatim}
\usepackage[normalem]{ulem}  
\usepackage{xspace}

\usepackage[all]{xy}

\usepackage{kotex}
\allowdisplaybreaks[3]

\newcommand{\nc}{\newcommand}
\numberwithin{equation}{section}
\usepackage[colorinlistoftodos]{todonotes}

\newenvironment{red}{\relax\color{red}}{\relax}
\newenvironment{blue}{\relax\color{blue}}{\hspace*{.5ex}\relax}
\newenvironment{jaune}{\relax\marginnote{\ber \scalebox{.6}{\sc{To be deleted}} \er}  \color{Orchid}}{\hspace*{.5ex}\relax}

\newcommand{\ber}{\begin{red}}
\newcommand{\er}{\end{red}}
\nc{\bej}{\begin{jaune}}
\nc{\ej}{\end{jaune}}
\newcommand{\beb}{\begin{blue}}
\newcommand{\eb}{\end{blue}}

\newcommand{\berE}[1]{\begin{red}{}\marginnote{\fbox{\scshape\lowercase{E}}} #1} 
\newcommand{\berMH}[1]{\begin{red}{}\marginnote{\fbox{\scshape\lowercase{MH}}} #1}  
\newcommand{\berJ}[1]{\begin{red}{}\marginnote{\fbox{\scshape\lowercase{J}}} #1}  

\nc{\hs}{\hspace*}
\nc{\ms}{\mspace}
\nc{\st}[1]{\{{#1}\}}

\nc{\qR}[1]{\ttq_{\mspace{-2mu}\raisebox{-.8ex}{${\scriptstyle{#1}}$}}}

\theoremstyle{plain}
\newtheorem{lemma}{Lemma}[section]
\newtheorem{proposition}[lemma]{Proposition}
\newtheorem{theorem}[lemma]{Theorem}

\newtheorem{corollary}[lemma]{Corollary}

\theoremstyle{definition}
\newtheorem{remark}[lemma]{Remark}
\newtheorem{example}[lemma]{Example}

\newtheorem{definition}[lemma]{Definition}

\nc{\ledot}{\mathrel{\le\ms{-11mu}\raisebox{.2ex}{$\cdot$}}}
\nc{\predot}{\mathrel{\preceq\ms{-9mu}\raisebox{.35ex}{$\centerdot$}}}

\newcommand{\Zq}{{\Z[q^{\pm 1/2}]}}

\renewcommand{\le}{\leqslant}
\renewcommand{\ge}{\geqslant}
\renewcommand{\preceq}{\preccurlyeq}

\newcommand{\head}{{\operatorname{hd}}}

\newcommand{\seteq}{\mathbin{:=}}

\newcommand{\tens}{\mathop\otimes}

\newcommand{\g}{\mathfrak{g}}

\newcommand{\R}{\mathbb{R}}
\newcommand{\C}{\mathbb{C}}
\newcommand{\Q}{\mathbb{Q}}
\newcommand{\Z}{\mathbb{Z}\ms{1mu}}

\newcommand{\al}{{\ms{1mu}\alpha}}
\newcommand{\ep}{\epsilon}
\newcommand{\la}{\lambda}

\newcommand{\ch}{{\rm ch}}

\newcommand{\fin}{{{\rm fin}}}

\newcommand{\ta}{\widetilde{a}}
\newcommand{\tb}{\widetilde{b}}

\newcommand{\hd}{\mathrm{hd}}

\newcommand{\frakC}{\mathfrak{C}}

\newcommand{\frakF}{\mathfrak{F}}

\newcommand{\frakH}{\mathfrak{H}}

\newcommand{\frakc}{\mathfrak{c}}

\newcommand{\tfrakc}{\widetilde{\frakc}}

\newcommand{\sfB}{\mathsf{B}}
\newcommand{\sfC}{\mathsf{C}}
\newcommand{\sfD}{\mathsf{D}}

\newcommand{\sfG}{\mathsf{G}}

\newcommand{\sfK}{\mathsf{K}}

\newcommand{\sfP}{\mathsf{P}}

\newcommand{\sfT}{\mathsf{T}}

\newcommand{\sfW}{\mathsf{W}}

\newcommand{\sfg}{\mathsf{g}}

\newcommand{\bbA}{\mathbb{A}}

\newcommand{\bbD}{\mathbb{D}}

\newcommand{\obbA}{\bbA}
\newcommand{\tobbA}{\widetilde{\obbA}}

\newcommand{\bPhi}{{\boldsymbol{\Phi}}}

\newcommand{\bfA}{{ \Z[q^{\pm1/2}]  }}

\newcommand{\bfL}{\mathbf{L}}

\newcommand{\bfR}{\mathbf{R}}

\newcommand{\bfX}{\mathbf{X}}

\newcommand{\bfh}{\mathbf{h}}

\newcommand{\bfj}{\mathbf{j}}

\newcommand{\bfs}{\mathbf{s}}

\newcommand{\calA}{\mathcal{A}}
\newcommand{\calB}{\mathcal{B}}

\newcommand{\calE}{\mathcal{E}}

\newcommand{\calH}{\mathcal{H}}

\newcommand{\calL}{\mathcal{L}}

\newcommand{\calQ}{\mathcal{Q}}
\newcommand{\calR}{\mathcal{R}}

\newcommand{\calU}{\mathcal{U}}

\newcommand{\hcalA}{\widehat{\calA}}

\newcommand{\scrA}{\mathscr{A}}

\newcommand{\scrC}{\mathscr{C}}
\newcommand{\scrD}{\mathscr{D}}

\newcommand{\ttB}{\mathtt{B}}

\newcommand{\ttb}{\mathtt{b}}

\newcommand{\ttq}{\mathtt{q}}

\nc{\bg}{\sigma}

\newcommand{\rmF}{\mathrm{F}}

\newcommand{\rmL}{\mathrm{L}}

\newcommand{\rmR}{\mathrm{R}}

\newlength{\mylength}
\newcommand*{\para}{%
  \rlap{\rotatebox{-30}{\rule[.05ex]{.4pt}{.77em}}}%
  \kern.04em%
  \rlap{\kern.36em\raisebox{0.649519052835em}{\rule{.6em}{.4pt}}}%
  \rule{.6em}{.4pt}\kern-.04em%
  \rotatebox{-30}{\rule[.05ex]{.4pt}{.77em}}}

\newcommand{\wl}{\sfP}

\newcommand{\weyl}{\sfW}
\newcommand{\lan}{\langle}
\newcommand{\ran}{\rangle}

\newcommand{\qtq}[1][{and}]{\quad\text{{#1}}\quad}

\newcommand{\ee}{\end{enumerate}}
\newcommand{\bitem}{\begin{itemize}}
\newcommand{\eitem}{\end{itemize}}
\newcommand{\ben}{\begin{enumerate}[{\rm (1)}]}
\newcommand{\bnum}{\begin{enumerate}[{\rm (i)}]}
\newcommand{\bnump}{\begin{enumerate}[{\rm (i)$'$}]}
\newcommand{\bna}{\begin{enumerate}[{\rm (a)}]}
\newcommand{\bnA}{\begin{enumerate}[{\rm (A)}]}
\newcommand{\bc}{\begin{cases}}
\newcommand{\ec}{\end{cases}}
\newcommand{\ba}{\begin{array}}
\newcommand{\ea}{\end{array}}

\nc{\eqs}[1]{\underset{\raisebox{.4ex}[.7ex][0ex]{$\scriptstyle{#1}$}}{=}}

\newcommand{\Dcan}{{\bbD_\can}}

\newcommand{\rdual}{\scrD}

\newcommand{\ang}[1]{\langle#1\rangle}

\newcommand{\bone}{\mathbf{1}}

\newcommand{\Cg}{{\scrC_\g}}

\newcommand{\Cgz}{{\scrC^0_\g}}

\newcommand{\longepito}[1][]{\xymatrix@C=4ex{{}\ar@{->>}[r]^{#1}&{}}}

\newcommand{\bl}{\bigl(}
\newcommand{\br}{\bigr)}

\newcommand{\uii}{{\boldsymbol{i}}}

\nc{\ake}[1][2ex]{\rule[-.5ex]{0ex}{#1}}

\newcommand{\ujj}{{\boldsymbol{j}}}
\newcommand{\TT}{\mathrm{T}}
\newcommand{\FF}{\mathsf{p}}
\newcommand{\bFF}{ \overline{\FF}}

\newcommand{\Seq}{\mathrm{Seq}}

\newcommand{\kk}{{\Q(q^{1/2})}}

\nc{\col}{\colon}
\nc{\ord}{\mathrm{ord}}
\nc{\catQ}{\scrC_\calQ}
\nc{\catD}{\scrC_\bbD}
\nc{\vs}{\vspace*}
\nc{\D}{\mathscr{D}}

\nc{\tLa}{\widetilde{\Lambda}}
\nc{\ro}{{\rm(}}
\nc{\rf}{{\rm)}\xspace}
\nc{\Aut}{\mathrm{Aut}}
\nc{\can}{\mathrm{can}}
\nc{\Dc}{{\bbD_{\can}}}
\nc{\Cgo}{\scrC_{\mathfrak{g}}^{0}}
\nc{\bb}{\mathtt{b}}
\nc{\catC}{\scrC}
\nc{\braid}{\ttB}
\nc{\Ass}[1][\bbD]{\calE_{#1}}
\nc{\Ci}{C^\uii}
\nc{\Cj}{C^\ujj}
\nc{\condi}[1][K]{with $#1\cap\st{0,1}\not=\emptyset$\xspace}
\nc{\Di}[1][{\uii}]{{\bbD,{#1}}}
\nc{\Vi}[1][\uii]{V^{#1}}
\nc{\Pii}[1][\uii]{P^{#1}}
\nc{\Vdi}[1][{\Di}]{V^{#1}}
\nc{\Pdi}[1][{\Di}]{P^{#1}} 
\nc{\PBix}[1][{[l,r]}]{\Z_{\ge0}^{\oplus{#1}}}
\nc{\mcf}[1][{$[a,b]$}]{maximal commuting family of $i$-boxes in {#1}\xspace}
\nc{\nn}{\nonumber}
\nc{\bwr}{\mbox{\large$\wr$}}
\newcommand{\tch}{\widetilde{\ch}}
\nc{\tchDcan}{\ms{2mu}\tch_{\mspace{.1mu}\raisebox{-.4ex}{${\scriptstyle{\Dcan}}$}}}
\nc{\monoto}[1][]{\xymatrix@C=2ex{\ar@{>->}[r]^-{{#1}}&}\ms{-8mu}}

\DeclareMathOperator{\Br}{Br}
\DeclareMathOperator{\SL}{SL}

\newcommand{\fW}{\mathfrak{W}}

\DeclareMathOperator{\Conf}{Conf}

\newcommand{\rind}[1]{\overrightarrow{\mathfrak{w}}(#1)}

\newcommand{\uh}{\underline{\mathsf{h}}}

\newcommand{\bcol}{\mathrm{color}}

\newcommand{\mD}{\mathsf{D}}
\newcommand{\pP}{\mathcal{P}}
\newcommand{\tpP}{\widetilde{\pP}}

\newcommand{\fl}{\mathfrak{Fl}}
\newcommand{\rxw}{{\underline{\Delta}}}
\newcommand{\PM}{\triangle}
\newcommand{\cmC}{\mathsf{C}}
\newcommand{\KR}{Kirillov–Reshetikhin\ }
\newcommand{\cA}{\scrA}
\newcommand{\ccA}{{\cA_\bullet}}
\newcommand{\seed}{\Sigma}
\newcommand{\gf}{{\g_{\rm fin}}}

\title[\(i\)-boxes and Demazure weaves]{A Comparison of cluster algebra structures arising from \(i\)-boxes and Demazure weaves}

\author[J. Huh]{JiSun Huh}
\thanks{The research of J.\ Huh was supported by the National Research Foundation of Korea(NRF) grant funded by the Korea government (MSIT) (RS-2023-00273425).}
\address[J. Huh]{Department of Mathematics, University of Seoul, Seoul 02504, Republic of Korea}
\email{hyunyjia@yonsei.ac.kr}

\author[W.-S. Jung]{Woo-Seok Jung}
\thanks{The research of W.-S. Jung was supported by the National Research Foundation of Korea (NRF) grants funded by the Korea government (Ministry of Education and MSIT) (RS-2024-00451844 and RS-2023-00273425).}
\address[W.-S. Jung]{Department of Mathematics, University of Seoul, Seoul 02504, Republic of Korea}
\email{jungws@uos.ac.kr}

\author[M. Kim]{Myungho Kim}
 \thanks{The research of M.\ Kim was supported by the National Research Foundation of Korea (NRF) Grant funded by the Korea Government(MSIT)(NRF-2020R1A5A1016126).} 
\address[M. Kim]{Department of Mathematics, Kyung Hee University, Seoul 02447, Republic of Korea}
\email{mkim@khu.ac.kr}

\author[E. Park]{Euiyong Park}
\thanks{The research of E.\ Park was supported by the National Research Foundation of Korea (NRF) Grant funded by the Korea Government(MSIT)(RS-2023-00273425 and NRF-2020R1A5A1016126).}
\address[E.\ Park]{Department of Mathematics, University of Seoul, Seoul 02504, Republic of Korea; 
\&
Department of Mathematics, University of Connecticut, Storrs, CT 06269, USA
}
\email{epark@uos.ac.kr}

\keywords{}
\date{June 22, 2026}

\begin{document}

\maketitle

\begin{abstract}
We compare two cluster algebras related to a positive element \(\ttb\) in the braid group of finite \(ADE\) type. One is the localized bosonic extension \(\tobbA_\C(\ttb)\) equipped with an initial seed arising from an admissible chain \(\frakC\) of \(i\)-boxes, which is deeply connected to monoidal categorification. The other is the coordinate ring \(\C[X(\rxw\uii)]\) of the braid variety \(X(\rxw\uii)\) equipped with an initial seed arising from a Demazure weave $\fW$, where $\uii$ and $\rxw$  are expression sequences of $\ttb$ and  the half twist $\Delta$,  respectively. 
We explicitly construct a Demazure weave \(\fW_{\rxw}(\frakC)\) for each admissible chain \(\frakC\) associated with $\uii$, and prove that there exists an algebra isomorphism \(\varphi_{\uii}\colon \tobbA_\C(\ttb)\to\C[X(\rxw\uii)]\) which is compatible with the two seeds arising from $\frakC$ and \(\fW_{\rxw}(\frakC)\). Moreover, the isomorphism $\varphi_{\uii}$ sends the PBW vectors \(\bFF_{\uii,k} \in \tobbA_\C(\ttb)\) to the coordinates \(z_k \in \C[X(\rxw\uii)]\) indexed by the letters of \(\uii\).
As applications, we investigate a connection between Demazure weaves and signed words via the $i$-boxes and interpret the isomorphism $\varphi_{\uii}$ from the viewpoint of monoidal categorification using Hernandez--Leclerc categories.  
\end{abstract}

\setcounter{tocdepth}{1}
\tableofcontents

\section{Introduction}

Cluster algebras were introduced by Fomin and Zelevinsky in studying total positivity and dual canonical bases of quantum groups (\cite{FZ02}). 
When an algebra has a cluster algebra structure, this structure provides a way to organize distinguished generators into groups called \emph{seeds}, together with procedures called \emph{mutations} to connect seeds.
Since their introduction, cluster algebras have played an important role in several areas of mathematics including representation theory, combinatorics, geometry, and categorification.

Let \(\cmC\) be a Cartan matrix of finite \(ADE\) type with index set \(I\), and let \(\Br^+\) be the positive monoid of the (generalized) braid group $\Br$ associated with \(\cmC\). 
There exist two interesting cluster algebras arising from  $\Br^+,$ 
namely the \emph{bosonic extensions} and the coordinate rings of 
  \emph{braid varieties}.

The bosonic extension \(\hcalA\) is the associative algebra over \(\Q(q^{1/2})\) generated by \(f_{i,p}\) (\(i\in I\) and \(p\in\mathbb Z\)) with  quantum Serre relations and \(q\)-boson relations determined by $\cmC$. This algebra \(\hcalA\) is decomposed into an infinite tensor product of the negative half \(\calU_q^-(\g)\) of the quantum group \(\calU_q(\g)\)  associated with \(\cmC\) as a \(\Q(q^{1/2})\)-vector space, i.e., $\hcalA \simeq \otimes_{\Z} \calU_q^-(\g) $. In this sense, \(\hcalA\) can be understood as an \emph{affinization} of $\calU_q^-(\g)$. Bosonic extensions are motivated by the ring presentations of quantum Grothendieck rings of \emph{Hernandez--Leclerc categories} over quantum affine algebras. 
The Hernandez--Leclerc category is a distinguished monoidal subcategory of the category of finite-dimensional integrable modules over a quantum affine algebra.  
Hernandez and Leclerc investigated such monoidal subcategories from the perspective of cluster algebras, and  studied the quantum Grothendieck rings of the categories and their ring presentations, which motivate the definition of bosonic extensions (\cite{HL10,HL15, HL16}).

Braid symmetries on \(\hcalA\), studied in
\cite{KKOP21B, JLO23, KKOP24E}, define the subalgebra \(\hcalA(\ttb)\) of $\hcalA$ 
generated by the \emph{PBW vectors} \(\FF_{\uii,k}\) associated with an expression sequence \(\uii\)
of a positive braid \(\ttb \in \Br^+\) (\cite{OP25,KKOP24E}).  
 The algebras \(\hcalA(\ttb)\) generalize the \emph{quantum unipotent coordinate rings} \(A_q(\mathfrak n(w))\), a quantum analogue of a coordinate ring of \emph{unipotent subgroups}, where positive braid elements $\ttb$ play the role of Weyl group elements $w$.
The algebra \(\hcalA(\ttb)\) has a (quantum) cluster algebra structure whose initial seeds are described in terms of \(i\)-boxes (\cite{KKOP25}). 
For the purposes of this paper, we consider the specialization  \(\obbA(\ttb)\) of \(\hcalA(\ttb)\) at \(q^{1/2}=1\).  
We choose an expression sequence $\uii=(i_1,i_2, \ldots,i_r)$  of $\ttb$. 
An \emph{\(i\)-box} associated with $\uii$ is an interval \([a,b]\subset[1,r]\) with \(i_a=i_b\), which is a combinatorial object used to describe a seed of \(\obbA(\ttb)\). For each admissible chain \(\frakC=(\frakc_t=[a_t,b_t])_{1\le t\le r}\) of \(i\)-boxes, one can define distinguished elements \(\mD_{\uii}[a,b]\in\obbA(\ttb)\) using the \emph{$T$-systems} and PBW vectors, which gives a seed
\[
    \seed(\frakC)=(\mD(\frakC),Q(\frakC))
\]
for \(\obbA(\ttb)\), where \(\mD(\frakC)=\{\mD_{\uii}[a_t,b_t]\mid 1\le t\le r\}\) and \(Q(\frakC)\) is the quiver determined by \(\frakC\) (see \cite[Theorem~9.4]{KKOP25} and see also Section \ref{Sec: cs for be}).

Braid varieties are algebraic varieties defined from expression sequences of $\ttb$ for the simple algebraic group \(\sfG\) associated with the Cartan matrix $\cmC$ (\cite{CGGLSS25, GLSBS22,GLSB26}). They generalize important families such as open Richardson varieties and double Bott-Samelson varieties. Let us fix an expression sequence \(\uii = (i_1,i_2,\ldots,i_r)\) of $\ttb$, and let \(\delta(\ttb)\) be the element in the Weyl group $\weyl$ obtained from \(\ttb\) by taking the \emph{Demazure product} (see \eqref{Eq: Dp}).
We take a Borel subgroup \(\sfB\) of $\sfG$.
The braid variety \(X(\uii)\) consists of flag configurations \((\sfB_0,\sfB_1, \ldots,\sfB_r)\) in the flag variety \(\fl = \sfG/\sfB\) such that \(\sfB_0=\sfB\), \(\sfB_r=\delta(\ttb)\sfB\)  and the pair \((\sfB_{t-1},\sfB_t)\) is in relative position \(s_{i_t}\) for \(1\le t\le r\), where $s_i$ is the simple reflection of the Weyl group $\weyl$.
 Although a different choice of expression $\uii$ yields an isomorphic variety, the choice of expression is important: it determines ambient affine coordinates for the braid variety, which in turn give a set of defining equations for $X(\uii)$. 

Let \(w_\circ\) be the longest element of $\weyl$, and let \(\Delta\in\Br^+\) denote the \emph{half twist}, i.e., the minimal lift of \(w_\circ\) under the projection $\Br^+ \twoheadrightarrow \weyl$.
Then the braid variety \(X(\rxw\uii)\) is isomorphic to the \emph{half decorated double Bott-Samelson variety} for an expression sequence \(\rxw\) of $\Delta$, where $\rxw \uii$ is the concatenation of $\rxw$ and $\uii$ (see \cite[Lemma~3.16]{CGGLSS25} and see also Section \ref{Sec: cs for bv}).
In this paper, we mainly focus on these braid varieties $ X(\rxw \uii) $.
 Note that the cluster algebra structure on the coordinate ring $\C[X(\rxw(\uii))]$  of the double Bott-Samelson variety is extensively studied in \cite{SW24}, and it provides a starting point for constructing cluster structures on more general braid varieties in \cite{CGGLSS25}.

The coordinate ring \(\C[X(\uii)]\) of a braid variety carries a cluster algebra structure from \emph{Demazure weaves} (\cite{CGGLSS25}). A Demazure weave for \(\uii\) is a planar graph, written \(\fW:\uii\to\delta(\uii)\), whose edges are labeled by the index set $I$ and whose horizontal slices record \emph{weave moves} from \(\uii\) to the Demazure product \(\delta(\uii)=\delta(\ttb)\). 
The weave \(\fW\) provides a seed of \(\C[X(\uii)]\) 
\[
    \Sigma(\fW)=\bigl(\calA(\fW),Q(\fW)\bigr),
\]
where 
$\calA(\fW)$ is the set of the regular functions in \(\C[X(\uii)]\) defined by \(\fW\), and
$Q(\fW)$ is the quiver whose vertices are indexed by trivalent vertices of \(\fW\) and whose arrows are determined by \emph{Lusztig's cycles} (\cite[Sections 4 and 5]{CGGLSS25}).
We remark that cluster structures on braid varieties were also constructed using \(3\)-dimensional plabic graphs and Deodhar geometry (\cite{GLSBS22,GLSB26}). The construction using Deodhar geometry was compared with the construction from Demazure weaves in \cite{CGGLSBS25}.

In this paper, we compare two cluster algebras $\obbA(\ttb)$ and \(\C[X(\rxw \uii)]\) by investigating their seeds arising from $i$-boxes and the associated Demazure weaves, where $\uii$ is an expression sequence of $\ttb$. 
Define \(\obbA_\C(\ttb) :=\C\otimes_\Z\obbA(\ttb)\), and let \(\tobbA_\C(\ttb)\) be the localization of \(\obbA_\C(\ttb)\) by inverting the frozen variables of \(\obbA(\ttb)\) (see 
Section \ref{Sec: cs for be}). 
For each admissible chain \(\frakC\) of \(i\)-boxes associated with $\uii$, we construct the corresponding Demazure weave \(\fW_{\rxw}(\frakC)\) using the combinatorics of $i$-boxes. 
Theorem~\ref{thm: main sec3} says that 
there exists an algebra isomorphism
\[
\varphi_{\uii}: \tobbA_\C(\ttb)\xrightarrow{\sim}\C[X(\rxw\uii)],
\]
which sends the seed $\seed(\frakC)$ to the seed $\Sigma(\fW_{\rxw}(\frakC))$ in the sense that 
$$ 
\calA(\fW_{\rxw}(\frakC)) = \varphi_{\uii}(\mD(\frakC))   \qtq  Q(\fW_{\rxw}(\frakC)) = Q(\frakC)^{\mathrm{op}},
$$
where $Q^{\mathrm{op}}$ is the opposite quiver of $Q$.
Moreover, this morphism $\varphi_{\uii}$ sends the PBW vectors \(\bFF_{\uii,k}\) to the coordinate functions \(z_k\) of $ \C[X(\rxw\uii)]$ corresponding to the letters of \(\uii\)
(see \eqref{Eq: 3 iso} for the coordinates $z_k$). 
We remark that the existence of 
an algebra isomorphism between \(\tobbA_\C(\ttb)\) and \(\C[X(\rxw\uii)]\) compatible with cluster algebra structures
was previously shown by Qin in the context of the cluster algebras associated with signed words (see \cite[Section 7.2]{Qin24A} and \cite[Section 6.3]{Qin24B}).
The main contribution of this paper is to show that the isomorphism \(\varphi_{\uii}\) is compatible with our weave construction $\fW_{\rxw}(\frakC)$ arising from admissible chains $\frakC$ of $i$-boxes, which provides a natural explanation why the PBW vectors of $\tobbA_\C(\ttb)$ correspond to the coordinates of $\C[X(\rxw\uii)]$.

The two cluster algebra structures compared here arise from different constructions: one using bosonic extensions and $i$-boxes, which is deeply connected to monoidal categorification, and the other using braid varieties and Demazure weaves arising in geometry.
The morphism $\varphi_{\uii}$ in Theorem~\ref{thm: main sec3} provides a bridge between them and allows us to compute explicitly the cluster variables of $\fW_{\rxw}(\frakC)$ and the quiver $Q(\fW_{\rxw}(\frakC))$ using the combinatorics of $i$-boxes. Although the weaves $\fW_{\rxw}(\frakC)$ form a special family of weaves, 
both \(\calA(\fW_{\rxw}(\frakC))\) and \(Q(\fW_{\rxw}(\frakC))\) can be computed easily using \(i\)-boxes, as illustrated through examples in the paper.

The proof of Theorem~\ref{thm: main sec3} begins by constructing the weave \(\fW_{\rxw}(\frakC)\) from an admissible chain \(\frakC\).  
 First, we associate a double string \(\bfs_{\rxw}(\frakC)\) with \(\frakC\),  and obtain the corresponding  double inductive weave \(\fW^B_{\rxw}(\frakC)\)  following \cite[Section 6.4]{CGGLSS25}.  
The weave \(\fW_{\rxw}(\frakC)\) is then obtained by vertically concatenating the  weave \(\fW^B_{\rxw}(\frakC)\) with another weave \(\fW^{\rm T}_{\rxw}(\frakC)\) consisting of braid moves chosen so as to make the top equal to  \( \rxw \uii\) (see Section \ref{Sec: diw from iBox} for precise construction).
The argument first treats the distinguished admissible chain \(\frakC_{\calR}\) in \eqref{Eq: C calR}; for this chain, the weave \(\fW^B_{\rxw}(\frakC_{\calR})\) coincides with the right inductive weave \(\rind{\rxw\uii}\). We thus identify \(Q(\rind{\rxw\uii})\) with \(Q(\frakC_{\calR})^{\mathrm{op}}\) (see Lemma~\ref{lem: RRR and riw}), which yields  
the isomorphism \(\varphi_{\uii}\) defined by sending the seed \(\seed(\frakC_{\calR})\) of $\tobbA_\C(\ttb)$ to the seed \(\Sigma(\fW_{\rxw}(\frakC_{\calR}))\) of $\C[X(\rxw\uii)]$.  
Every admissible chain can be reached from \(\frakC_{\calR}\) by \emph{box moves} (see Section \ref{Sec: box moves}). The box moves can be described in terms of mutations and permutations (Proposition~\ref{prop:boxmove_seed}), and Theorem~\ref{thm: modified weave boxmove} gives the same mutations or permutations for the seeds constructed from the weaves \(\fW_{\rxw}(\frakC)\).
A key ingredient is the description in \cite[Proof of Theorem 6.8]{CGGLSS25} of mutations and permutations of inductive weaves arising from the corresponding moves of double strings. 
This explains why the isomorphism \(\varphi_{\uii}\) induces the correspondence between \(\frakC\) and  \(\fW_{\rxw}(\frakC)\) at the level of seeds, and between the PBW vectors \(\bFF_{\uii,k}\) and the coordinates  \(z_k\) at the level of rings.

We further investigate the isomorphism $\varphi_{\uii}$ from the perspective of the monoidal categorification through the representation-theoretic approach using \emph{affine determinantial modules} in \cite{KKOP25}. 
We remark that the monoidal categorification was also studied and shown by Qin via a different method based on a cluster-algebraic approach (see \cite[Theorem 8.19]{Qin24A} and \cite[Theorem 1.6]{Qin24B}). 
Let $\Cgo$ be the \emph{Hernandez--Leclerc} category over a quantum affine algebra and let $\Cg^\bbD(\ttb)$ be the distinguished subcategory of $\Cgo$ categorifying $\obbA(\ttb)$, i.e., 
$$
K( \Cg^\bbD(\ttb)) \simeq  \obbA(\ttb)
$$ 
(see Section \ref{sec: connection to qaa} for precise definitions).  
Under this categorification, for each expression sequence $\uii$ of $\ttb$, 
the PBW vectors \(\bFF_{\uii,t}\) and \(\mD_{\uii}[a,b]\)
correspond to the \emph{affine cuspidal modules} $C_t^\uii$ and the \emph{affine determinantial modules} $M^{ \uii}[a,b]$, respectively. 
Proposition \ref{cor: K simeq oA} tells us that the modules $C_t^\uii$ correspond to the coordinates $z_t$ and the modules $M^{ \uii}[a,b]$ correspond to cluster variables arising from  \(\fW_{\rxw}(\frakC)\) via the isomorphism $\varphi_{\uii}$. In this sense, we categorify the coordinates $z_t$ of $X(\rxw\uii)$ as the
affine cuspidal modules $C_t^\uii$ in the category $\Cg^\bbD(\ttb)$.

As another direction of investigation, we study connections with signed words.
Using the connection between signed words and admissible chains of \(i\)-boxes 
studied in \cite{CQW26}, to each admissible chain \(\frakC\), one assigns a signed word \(\uh(\frakC)\) with \(Q(\uh(\frakC))=Q(\frakC)\).  
Combining this with Theorem~\ref{thm: main sec3}, we obtain
\[
Q(\uh(\frakC))=Q(\fW_{\rxw}(\frakC))^{\rm op}
\]
(see Corollary~\ref{cor:signed-word-weave-quiver}).

The paper is organized as follows. 
Section~\ref{sec:prilim} recalls necessary background on quantum groups and cluster algebras.  Section~\ref{Sec: i-boxes} and Section~\ref{Sec: BE} review the combinatorics of \(i\)-boxes,  bosonic extensions and their cluster algebra structure.  
Section~\ref{Sec: weave} and Section~\ref{sec: braid varieties} discuss Demazure weaves,
braid varieties and their cluster algebra structures.  
Section~\ref{sec: comparison of two cluster algebra structures} constructs the weave \(\fW_{\rxw}(\frakC)\) for an admissible chain $\frakC$ and proves the existence of the isomorphism $\varphi_{\uii}$ with the desired properties. 
Section~\ref{sec: connection to signed words} compares the quivers obtained from \(i\)-boxes, signed words, and Demazure weaves.  
Section~\ref{sec: connection to qaa} investigates the isomorphism $\varphi_{\uii}$ from the perspective of monoidal categorification using Hernandez--Leclerc categories.


\vskip 1em

{\bf Acknowledgments}.\ 
The authors would like to thank Fan Qin for pointing out relevant references concerning the isomorphism \(\varphi_{\uii}\) and for fruitful discussions.
E.P.\ would like to thank the Department of Mathematics at the University of Connecticut for its hospitality and excellent research environment during his visit.

\vskip 1em 

\textbf{Convention}. Throughout this paper, we use the following convention.
\bna
\item For $a,b \in \mathbb{Z} \sqcup \{\pm\infty\}$ with $a \le b$, we define the \emph{interval} $[a,b]$ by
\[
[a,b] := \{ t \in \mathbb{Z} \mid a \le t \le b \}.
\]
If $a>b$, we set $[a,b]=\emptyset$. We sometimes write $[a]$ for $[a,a]$ if no confusion arises.
\item For a set $S$, $|S|$ denotes the cardinality of $S$.
\item Let $\uii=(i_1, i_2, \ldots, i_r)$ and $\ujj=( j_1, j_2, \ldots,j_s)$ 
be sequences with entries in an index set $I$. 
The \emph{concatenation} $\uii * \ujj$ of $\uii$ and $\ujj$ is defined as
\[
\uii * \ujj := (i_1,\ldots,i_r,j_1,\ldots,j_s).
\]
For simplicity, we write $\uii \ujj$ for $\uii * \ujj$ if no confusion arises.
\ee

\vskip 2em 

\section{Preliminaries}\label{sec:prilim}

\subsection{Quantum groups} \label{Sec: Quantum group}
In this subsection, we recall the notion of quantum groups (\cite{LusztigBook}).

Let $(\cmC, \wl, \Pi, \wl^\vee, \Pi^\vee)$ be a Cartan datum consisting of 
a symmetrizable generalized Cartan matrix $\cmC = (c_{i,j})_{i,j\in I}$, 
a weight lattice $\wl$, a coweight lattice $\wl^\vee$, the set of simple roots $\Pi=\{\al_i\}_{i \in I}$, and the set of simple coroots $\Pi^\vee=\{h_i \}_{i \in I}$. They satisfy the pairing condition $\langle h_i, \al_j \rangle = c_{i,j}$ for $i,j\in I$, and there exists a $\mathbb{Q}$-valued symmetric bilinear form $(\cdot,\cdot)$ on $\wl$ such that
\[
\langle h_i, \la \rangle = \frac{2(\al_i, \la)}{(\al_i, \al_i)}
\]
for all $i \in I$ and $\la \in \wl$. 
In this paper, we choose a bilinear form $(\cdot,\cdot)$ such that  $ (\al_i, \al_i)=2$ whenever $\cmC$ is symmetric. 

Let $ q$ be an indeterminate, and let  $\calU_q(\sfg)$ be the quantum group  associated with $\cmC$, which is the associative algebra over $\Q(q)$ generated by $e_i, f_i$ ($i\in I$) and $q^h$ ($h\in \wl^\vee$) satisfying certain condition (see \cite{LusztigBook} for details). We denote by $\calU_q^-(\sfg) = \langle f_i \mid i\in I \rangle$ the negative half of $\calU_q(\sfg)$.

Let $\weyl$ be the Weyl group associated with the Cartan matrix $\sfC$, which is generated by  $s_i$ $(i \in I)$ given as
$$ s_i\la = \la -\lan h_i,\la \ran\al_i \quad \text{for $\la \in \wl$}.  
$$
We denote by $\Br$ the \emph{$($generalized$)$ braid group} associated with $\cmC$. The braid group $\Br$ has a group presentation: the generating set $\{ \sigma_i^{\pm1} \mid i \in I\}$ and its defining relations
\begin{align*} 
\sigma_i \sigma_i^{-1} = \sigma_i^{-1}\sigma_i = {\rm id} 
\qtq
\underbrace{\sigma_i\sigma_j\sigma_i\sigma_j \cdots}_{m(i,j) \text{ times}} = \underbrace{\sigma_j\sigma_i\sigma_j\sigma_i \cdots}_{m(i,j) \text{ times}} \quad \text{ for $i \ne j \in I$},
\end{align*}
where 
\begin{align*} 
m(i,j) \seteq
\begin{cases}
c_{i,j} c_{j, i}+2 & \text{ if } c_{i,j} c_{j, i} \le 2,\\
6 & \text{ if } c_{i,j} c_{j, i} = 3,  \\
\infty & \text{ otherwise.}
\end{cases}
\end{align*}
We set $\Br^+$ to be the submonoid of $\Br$ generated by $\sigma_i$ for all $i\in I$. 
There is a natural group epimorphism $\Br \twoheadrightarrow \weyl$ 
sending $ \sigma_{i}$ to $s_i$ for $i\in I$.
For an element $\ttb \in \Br^+ $ (resp.\ $w \in \weyl$), we denote by $\ell(\ttb)$ (resp.\ $\ell(w)$) the length of $\ttb$ (resp.\ $w$). 
We denote the canonical lift of $w \in \weyl$ to the positive braid monoid $\Br^+$ by $\ttb(w)$. 
When $\weyl$ is of finite type, we define 
\begin{align} \label{Eq: D w0}
\Delta := \ttb(w_\circ) \in \Br^+,
\end{align}
where $w_\circ$ is the longest element in $\weyl$. We call $\Delta$ the \emph{half twist}. For $i\in I$, we define $i^*$ to be the index defined as follows:
\begin{align} \label{Eq:istar}
\al_{i^*} = - w_\circ (\al_i).
\end{align}
Note that the correspondence $i \mapsto i^*$ gives an involution on the index set $I$.

We now define expression sequences of $\ttb \in \Br^+$ and $w \in \weyl$ as follows.
\bnum
\item A sequence $\uii=(i_1,i_2,\ldots,i_r) \in I^{r}$ is said to be an \emph{expression} (or \emph{sequence}) of $\ttb$ if $\ttb = \sigma_{i_1}\sigma_{i_2}\cdots \sigma_{i_r}$. 
\item A sequence $\uii=(i_1,\ldots,i_{r}) \in I^{r}$ is said to be a \emph{reduced expression} (or \emph{reduced sequence}) of $w$
if $w = s_{i_1}\cdots s_{i_{r}}$ and $ \ell(w) = r $. 
\ee
We denote by $\Seq(\ttb)$ (resp.\ $\Seq(w)$) the set of all expressions of $\ttb$ (resp.\ $w$). We often write $\rxw$ for an expression of $\Delta$ in $\Seq(\Delta)$.

\subsection{Cluster algebras}\label{subsec: cluster algebra}
In this subsection, we recall the definition of a cluster algebra (\cite{FZ02}). 

Let $K$ be a finite index set with the decomposition $K = K^{\mathrm{ex}} \sqcup K^{\mathrm{fr}}$. We call $K^{\mathrm{ex}}$ the set of \emph{exchangeable indices} and $K^{\mathrm{fr}}$  the set of \emph{frozen indices}. Let $\frakF$ be a field of characteristic 0.

A \textit{seed} is a pair $\seed = (\mathbf{x}, B)$ consisting of 
\begin{itemize}
\item a set $\mathbf{x} = \{ x_k \mid k \in K \}$   
    algebraically independent in $\frakF$,
\item an integer matrix 
    \[
    B = (b_{i,j})_{(i,j) \in K \times K^{\mathrm{ex}}}
    \]
    whose principal part, namely the submatrix indexed by
    $K^{\mathrm{ex}} \times K^{\mathrm{ex}}$, is \emph{skew-symmetric}.
\end{itemize}

The exchange matrix $B$ determines a quiver $Q$ with no loops or 2-cycles.
The vertex set of $Q$ is $K$, and the arrows of $Q$ are determined by $B$ as follows:
\begin{itemize}
\item if $b_{i,j} > 0$, then there are $b_{i,j}$ arrows from $i$ to $j$,
\item if $b_{i,j} < 0$, then there are $-b_{i,j}$ arrows from $j$ to $i$,
\item if $b_{i,j} = 0$, then there are no arrows between $i$ and $j$.
\end{itemize}
In this setting, vertices in $K^{\mathrm{fr}}$ are called \emph{frozen vertices},
and vertices in $K^{\mathrm{ex}}$ are called \emph{mutable vertices}. We depict
$K^{\mathrm{fr}}$ by rectangular nodes and $K^{\mathrm{ex}}$ by circular nodes. 
We write $B(\Sigma)$ and $Q(\Sigma)$ for the above $B$ and $Q$ respectively, and 
we sometimes write such a seed by $\Sigma = (\mathbf{x}, Q)$ instead of $\Sigma = (\mathbf{x}, B)$. 

A \emph{labeled quiver} $Q$ is a quiver together with an assignment $l : Q_0 \to \calL$,
where $Q_0$ is the set of vertices of $Q$ and $\calL$ is the set of labels. Unless otherwise specified for $\calL$, we understand $\calL = \{ 1,2, \ldots, m \}$, where $m = |Q_0|$.
For a (labeled) quiver $Q$,  we denote by $Q^{\rm op}$ the \emph{opposite quiver} of $Q$, 
which is the quiver defined by keeping the same set of vertices (and the same labeling) of $Q$ and reversing all arrows of $Q$. Note that, if $B$ is the exchange matrix corresponding to $Q$, then $-B$ is the exchange matrix corresponding to $Q^{\rm op}$.

For any exchangeable index $k \in K^{\mathrm{ex}}$, the \textit{mutation} in direction $k$, denoted by $\mu_k$, transforms a seed $\seed = (\mathbf{x}, B)$ into a new seed $\Sigma' = \mu_k(\Sigma) := (\mathbf{x}', B')$. 
The entries of the mutated exchange matrix $B' = (b'_{i,j})$ are given by
\begin{equation*}
    b'_{i,j} =
    \begin{cases}
        - b_{i,j} & \text{if } i = k \text{ or } j = k, \\[2pt]
        b_{i,j} + \dfrac{|b_{i,k}| b_{k,j} + b_{i,k} |b_{k,j}|}{2}
        & \text{otherwise},
    \end{cases}
\end{equation*}
and the cluster variables $\mathbf{x}' = \mu_k(\mathbf{x}) = \{ x'_j \mid j \in K \}$
are updated according to the \emph{exchange relation}:
\begin{equation*}
    x'_j = 
    \begin{cases}
        \displaystyle{x_k^{-1} \left( \prod_{\substack{i \in K \\ b_{i,k}>0}} x_i^{b_{i,k}} 
        + \prod_{\substack{i \in K \\ b_{i,k}<0}} x_i^{-b_{i,k}} \right)} & \text{if } j=k, \\
        x_j & \text{if } j \neq k.
    \end{cases}
\end{equation*}
Two seeds are mutation equivalent if they are connected by a finite sequence of mutations.

We fix an initial seed $\seed = (\mathbf{x}, B)$ (equivalently, $\Sigma = (\mathbf{x}, Q)$).
Let $\mathbb{P} = \C  [x_j^{\pm 1} \mid j \in K^{\mathrm{fr}}]$ be the ring of Laurent polynomials in the frozen variables, and let $\overline{\mathbb{P}} =  \C [x_j \mid j \in K^{\mathrm{fr}}]$ be the ring of polynomials in the frozen variables. 

\begin{definition} \label{Def: cluster alg} \

\bnum
\item  The \emph{cluster algebra} $\cA(\seed)$ is the $\mathbb{P}$-subalgebra of the ambient field $\frakF$ generated by all cluster variables in all seeds which are mutation equivalent to $\seed$.
\item \label{Def: cluster alg (ii)}
We denote by $\ccA(\seed)$  the $\overline{\mathbb{P}}$-subalgebra of $\frakF$ generated by all cluster variables in all seeds which are mutation equivalent to $\seed$. This algebra is called the \emph{partially compactified cluster algebra} (\cite{GHKK18}). We understand $ \ccA(\seed) $ as a subalgebra of $ \cA(\Sigma)$. Note that $\cA(\seed)$ can be obtained from $\ccA(\seed)$ by localizing at the set of frozen variables.
\ee
\end{definition}

For an algebra $A$ in $\frakF$, we say that $A$ has a \emph{cluster algebra structure} if there exists a family $F$ of seeds $\seed = (\mathbf{x}, B)$ of $A$, i.e., $\mathbf{x} \subset A$, such that 
\bna
\item for any seed $\seed \in F$, the cluster algebra $\cA(\seed)$ (resp.\ $\ccA(\seed)$) coincides with $A$,
\item any mutation of a seed in $F$ is in $F$, 
\item any pair of seeds $\seed$ and $\seed'$ in $F$ are connected by a finite sequence of mutations.
\ee

\vskip 2em 

\section{Combinatorics of $i$-boxes} \label{Sec: i-boxes}

In this section, we review the notion of $i$-boxes following \cite{KK24, KKOP25,KKOP24A}.
An $i$-box is a combinatorial object that yields both of the seeds for the bosonic extensions  and the monoidal seeds for the Hernandez--Leclerc categories. 

Let $\cmC$ be a Cartan matrix of \emph{finite $ADE$ type} with an index set $I$, and let $\Br$ be the braid group associated with $\cmC$.
Throughout this section, we choose and fix an expression sequence 
\begin{align}
\uii = (i_1, i_2, \ldots, i_r) \in \Seq(\ttb)
\end{align}
for $\ttb \in \Br^+$.

\subsection{Chains of $i$-boxes} \label{Sec: chains of iboxes}
For an index $1 \le k \le r$ and $j \in I$, we define the nearest indices
at which $j$ occurs to the right and to the left of $k$ by
\begin{align*}
k_\uii(j)^+ &:= \min \bl \{ t \in [1,r] \mid t\ge k,\; i_t=j \} \sqcup \{ +\infty \} \br, \\
k_\uii(j)^- &:= \max \bl \{t \in [1,r] \mid t\le k,\; i_t=j \} \sqcup \{-\infty\} \br.
\end{align*}
In addition, we define the strictly next and strictly previous indices
at which $i_k$ occurs by
\begin{align*}
k_\uii^+ &:= \min \bl \{ t \in [1,r]\mid t> k,\; i_t= i_k \} \sqcup \{+\infty\} \br, \\
k_\uii^- &:= \max \bl \{t \in [1,r] \mid t< k,\; i_t=i_k \} \sqcup \{-\infty\} \br. 
\end{align*}
We sometimes omit the subscript $\uii$ when the expression is clear from the context.

\begin{definition}\label{def:ibox}\ 
 \bnum
  \item 
  An interval $\frakc=[a,b] \subseteq [1,r]$ is called an \emph{$i$-box of $\uii$} if $a\le b$ and $i_a=i_b$,
in which case we define its \emph{color} by $\bcol(\frakc):=i_a$.

      \item For a finite interval $[a,b] \subseteq [1,r]$, we define two associated $i$-boxes of $\uii$ by
      \[
      [a,b \} := [a,b(i_a)^-] \quad \text{and} \quad \{a,b] := [a(i_b)^+,b].
      \]
      \item A chain $\frakC = (\frakc_t=[a_t,b_t])_{1 \le t \le r}$ of $i$-boxes of $\uii$
	is called \emph{admissible} if, for each $t=1,\ldots,r$, the union
	\[
	\tfrakc_t = [\ta_t,\tb_t] := \bigcup_{1 \le j \le t} [a_j,b_j]
	\]
	is a finite interval with cardinality $t$, and the $t$-th $i$-box satisfies
	\[
	[a_t,b_t] = [\ta_t,\tb_t\}
	\quad\text{or}\quad
	[a_t,b_t] = \{\ta_t,\tb_t].
	\]
	We call $\tfrakc_t$ the envelope of $\frakc_t$, and $\tfrakc_r$ the range of $\frakC$.
  \ee
\end{definition}

An admissible chain $\frakC=(\frakc_t)_{1\le t\le r}$ is uniquely determined by
its initial envelope $\tfrakc_1=[c,c]$ and its sequence of \emph{horizontal moves}
$\frakH=(\calH_1,\ldots,\calH_{r-1})\in\{\calL,\calR\}^{r-1}$, 
where $c$ equals the number of occurrences of $\calL$ in $\frakH$ plus one. 
We often shortly write $\frakH$ for the pair $(c, \frakH)$ when no confusion arises. 
For $t\ge2$, the envelope $\tfrakc_t$ is obtained from
$\tfrakc_{t-1}=[\ta_{t-1},\tb_{t-1}]$ by
\[
\tfrakc_t=
\begin{cases}
[\ta_{t-1}-1,\tb_{t-1}] & \mbox{if $\calH_{t-1}=\calL$ (\emph{left move})},\\
[\ta_{t-1},\tb_{t-1}+1] & \mbox{if $\calH_{t-1}=\calR$ (\emph{right move})}.
\end{cases}
\]
We call $\frakH$ the \emph{LR sequence} of $\frakC$.
The unique element $z_t\in\tfrakc_t\setminus\tfrakc_{t-1}$ is called the
\emph{effective end} of $\frakc_t$; equivalently,
$z_t=\ta_t$ if $\calH_{t-1}=\calL$ and $z_t=\tb_t$ if $\calH_{t-1}=\calR$. 
We often denote $z_t$ by $z_{\frakc_t}$ when we wish to emphasize the
dependence on the $i$-box $\frakc_t$ rather than on the position $t$ in the chain.
Note that the $i$-box $\frakc_t$ is given by
\[
\frakc_t =
\begin{cases}
[\ta_t,\tb_t\} & \text{if } \calH_{t-1}=\calL,\\
\{\ta_t,\tb_t] & \text{if } \calH_{t-1}=\calR.
\end{cases}
\]

\subsection{Box moves} \label{Sec: box moves}

Let $\frakC=(\frakc_t)_{1\le t\le r}$ be the admissible chain of $i$-boxes associated with $\uii$ determined by a pair $(c,\frakH)$ of $c\in [1,r]$ and an LR sequence $\frakH$. 

For $1\leq k < r$, we say that $\frakc_k$ is \emph{movable} if $k=1$, or if $k\ge2$ and $\calH_{k-1}\neq\calH_k$. For a movable $\frakc_k$ in $\frakC$, the \emph{box move}
of $\frakC$ at $k$ is the admissible chain $\calB_k(\frakC)$ which is determined by the pair $(c',\frakH')$
such that
\[
{\rm (a)}~~ c'=\begin{cases}
c-1 & \mbox{if $k=1$ and $\calH_1=\calL$},\\
c+1 & \mbox{if $k=1$ and $\calH_1=\calR$},\\
c & \mbox{if $k>1$},
\end{cases}
\quad
{\rm (b)}~~ \calH'_s=\begin{cases}
\calR & \mbox{if $s\in \{ k-1,k \}$ and $\calH_s=\calL$},\\
\calL & \mbox{if $s\in \{ k-1,k \}$ and $\calH_s=\calR$},\\
\calH_s & \mbox{if $s\not\in \{ k-1,k \}$}.
\end{cases}
\]
That is, $\calB_k(\frakC)$ is obtained from $\frakC$ by moving $\tfrakc_k$ by $1$ to the right 
or to the left inside $\tfrakc_{k+1}$.

We now construct an exchange matrix associated with the admissible chain $\frakC=(\frakc_t)_{1\le t\le r}$. 
We define the index sets as follows:
\begin{align} \label{eq: frakC index}
\sfK(\frakC) &:= \{1,2,\ldots,r\}, \nonumber\\
\sfK(\frakC)^{\mathrm{fr}}
&:= \{\, t \in  \sfK (\frakC) \mid
\frakc_t = [1(i)^+,\, r(i)^-] \text{ for some } i \in I \,\},\\
\sfK(\frakC)^{\mathrm{ex}}
&:= \sfK(\frakC) \setminus  \sfK (\frakC)^{\mathrm{fr}}. \nonumber
\end{align}
Following \cite{KK24}, we associate to an admissible chain $\frakC$ an exchange
matrix $B(\frakC)$ and the corresponding quiver $Q(\frakC)$.

\begin{definition}[{\cite[\S 3.2]{KK24}}] \label{def: mon_seed_ibox}
Let $\frakC = (\frakc_t)_{1 \le t \le r}$ be an admissible chain of $i$-boxes
associated with $\uii \in \Seq(\ttb)$.
\bnum
\item The matrix  
$\bar{B}(\frakC) = (b_{\frakc_p,\frakc_q})_{(p,q) \in \sfK(\frakC) \times \sfK(\frakC)}$
is defined as follows:
\bna
\item $\bar{B}(\frakC)$ is skew-symmetric, 
\item the entries of $\bar{B}(\frakC)$ are contained in $\{-1,0,1\}$, 
\item for $[x,y],[x',y']\in\frakC$, 
$b_{[x,y],[x',y']}=1$ if and only if one of the following holds:
\begin{itemize}
\item $x=x'$ and $y'=y^{-}$, or $y=y'$ and $x'=x^{-}$;
\item $(\alpha_{i_x},\alpha_{i_{x'}})=-1$ and one of the following conditions holds:
\begin{enumerate}
\item $[x,y^{+}]\in\frakC$, $z_{[x,y]}=x$,
      $x'^{-}<x<x'$, and $y'<y^{+}<y'^{+}$;
\item $[x,y^{+}]\in\frakC$, $z_{[x',y']}=y'$,
      $x'^{-}<x$, and $y<y'<y^{+}<y'^{+}$;
\item $[x'^{-},y']\in\frakC$, $z_{[x',y']}=y'$,
      $x^{-}<x'^{-}<x$, and $y<y'<y^{+}$;
\item $[x'^{-},y']\in\frakC$, $z_{[x,y]}=x$,
      $x^{-}<x'^{-}<x<x'$, and $y'<y^{+}$.
\end{enumerate}
\end{itemize}
\ee

\item 
We define
\[
B(\frakC) := \bar{B}(\frakC)\big|_{\sfK(\frakC)\times \sfK(\frakC)^{\mathrm{ex}}}.
\]
\item We denote by $Q(\frakC)$ the labeled quiver associated with this exchange matrix $B(\frakC)$ whose vertices are labeled by $\{1,2,\dots,r\}$.
The vertex $t$ corresponds to the $i$-box $\frakc_t$.
\ee
\end{definition}

As a special case, we consider the admissible chain of $i$-boxes associated with $\uii$
\begin{align} \label{Eq: C calR}
\frakC_{\calR}
:= (\frakc_t)_{1\le t\le r}
\end{align}
determined by the LR sequence $(\calR,\calR,\ldots,\calR)$.
Then $\frakc_t = \{ 1, t ]$ for $t\in [1,r]$ and the corresponding sequence of colors coincides with $\uii$, i.e., 
\[
(\bcol(\frakc_t))_{1\le t\le r} = \uii.
\]
In this case, the exchange matrix is given by
\begin{align*}
B(\frakC_{\calR})
=(b_{s,t})_{(s,t)\in \sfK(\frakC_{\calR})\times \sfK(\frakC_{\calR})^{\mathrm{ex}} }
\end{align*}
where the entries $b_{s,t}$ are given as follows:
\begin{align} \label{eq: exchange matrix for RRR}
b_{s,t} := 
\begin{cases}
1 & \text{if either }  s=t^+ \text{ or } \bigl( s < t < s^+ < t^+ \text{ and } (\al_{i_s},\al_{i_t})=-1 \bigr),\\[2pt]
-1 & \text{if either }  t=s^+ \text{ or } \bigl( t < s < t^+ < s^+ \text{ and } (\al_{i_s},\al_{i_t})=-1 \bigr),\\[2pt]
0 & \text{otherwise.}
\end{cases}
\end{align}
Note that the quiver $Q(\frakC_{\calR})$ is the GLS quiver associated with $\uii$ (see \cite[(7.5)]{KKOP24A}).

\begin{example}\label{ex: ibox exchange matrix}
Let $\g$ be the simple Lie algebra of type $A_3$ with index set $I=\{1,2,3\}$, and set 
$$
\uii= (2,3,1,2,2,1).
$$
\bnum
\item \label{ex: ibox exchange matrix (i)}
Let $\frakC_{\calR} = (\frakc_t')_{1\le t\le 6}$ be the admissible chain determined by 
$\frakH' = (\calR,\calR,\calR,\calR,\calR)$. Then the $i$-boxes are given as
\begin{align*}
\frakc'_1=[1,1],\quad
\frakc'_2=[2,2],\quad
\frakc'_3=[3,3],\quad
\frakc'_4=[1,4],\quad
\frakc'_5=[1,5],\quad
\frakc'_6=[3,6], 
\end{align*}
and the index sets are given as 
$$
\sfK( \frakC_{\calR} ) = \{ 1,2,3,4,5,6 \}, \quad
\sfK( \frakC_{\calR} )^{\mathrm{ex}} = \{ 1,3,4 \}, \quad  
\sfK( \frakC_{\calR} )^{\mathrm{fr}} = \{ 2,5,6 \}. 
$$
Then the exchange matrix $B(\frakC_{\calR})$ and its quiver $Q(\frakC_{\calR})$ are given as follows:
$$
B(\frakC_{\calR})
=
\begin{pmatrix}
0& 1& -1\\
-1& 0& 0\\
-1& 0& 0\\
1&0& 0\\
0& -1&1\\
0&1&0
\end{pmatrix},
\qquad 
Q(\frakC_{\calR}) =
\raisebox{-13mm}{
\begin{tikzpicture}[scale=0.6]
    \coordinate (1) at (0,0);
    \coordinate (2) at (0,-4);
    \coordinate (3) at (-2,-2);
    \coordinate (4) at (2,-2);
    \coordinate (5) at (2,-4);
    \coordinate (6) at (-2,-4);

    \node[circle, draw=black, minimum size=0.5cm] at (1) {1};
    \node[rectangle, draw=black, minimum size=0.5cm] at (2) {2};
    \node[circle, draw=black, minimum size=0.5cm] at (3) {3};
    \node[circle, draw=black, minimum size=0.5cm] at (4) {4};
    \node[rectangle, draw=black,  minimum size=0.5cm] at (5) {5};
    \node[rectangle, draw=black, minimum size=0.5cm] at (6) {6};

    \draw[->,  thick, shorten >=10pt, shorten <=10pt] (1.south) -- (2.north);            
    \draw[->,  thick, shorten >=10pt, shorten <=10pt] (1.south west) -- (3.north east);  
    \draw[->,  thick, shorten >=10pt, shorten <=10pt] (4.north west) -- (1.south east);  
    \draw[->,  thick, shorten >=10pt, shorten <=10pt] (5.south west) -- (4.north east);  
    \draw[->,  thick, shorten >=10pt, shorten <=10pt] (3.north east) -- (5.west);        
    \draw[->,  thick, shorten >=10pt, shorten <=10pt] (6.south west) -- (3.north);       
\end{tikzpicture}
}.
$$

\item \label{ex: ibox exchange matrix (ii)}
Let  $\frakC=(\frakc_t)_{1\le t\le 6}$ be the admissible chain determined by $\frakH = (\calL,\calR,\calL,\calR,\calR)$. 
Then the $i$-boxes are given as
\begin{align*}
\frakc_1=[3,3],\quad
\frakc_2=[2,2],\quad
\frakc_3=[4,4],\quad
\frakc_4=[1,4],\quad
\frakc_5=[1,5],\quad
\frakc_6=[3,6], 
\end{align*}
and the index sets are given as 
$$
\sfK(\frakC) = \{ 1,2,3,4,5,6 \}, \quad
\sfK(\frakC)^{\mathrm{ex}} = \{ 1,3,4 \}, \quad  
\sfK(\frakC)^{\mathrm{fr}} = \{ 2,5,6 \}. 
$$
Note that $\frakC$ can be obtained from $\frakC_\calR$ by applying box moves, i.e.,  
\[
\frakC=\calB_1( \calB_3 (\calB_2(\calB_1(\frakC_{\calR})))).
\]
Then the exchange matrix $B(\frakC)$ and its quiver $Q(\frakC )$ are given as follows:
$$
B(\frakC)
=\begin{pmatrix}
0&1& -1\\
0& 1& -1\\
-1& 0& 1\\
1&-1& 0\\
-1& 0&1\\
1&0&0
\end{pmatrix},
\qquad 
Q(\frakC ) =
\raisebox{-13mm}{
\begin{tikzpicture}[scale=0.6]
    \coordinate (3) at (0,0);
    \coordinate (2) at (0,-4);
    \coordinate (1) at (-2,-2);
    \coordinate (4) at (2,-2);
    \coordinate (5) at (2,-4);
    \coordinate (6) at (-2,-4);

    \node[circle, draw=black, minimum size=0.5cm] at (3) {3};
    \node[rectangle, draw=black, minimum size=0.5cm] at (2) {2};
    \node[circle, draw=black, minimum size=0.5cm] at (1) {1};
    \node[circle, draw=black, minimum size=0.5cm] at (4) {4};
    \node[rectangle, draw=black,  minimum size=0.5cm] at (5) {5};
    \node[rectangle, draw=black, minimum size=0.5cm] at (6) {6};

    \draw[->,  thick, shorten >=10pt, shorten <=10pt] (1) -- (3);            
    \draw[->,  thick, shorten >=10pt, shorten <=10pt] (1) -- (5);  
    \draw[->,  thick, shorten >=10pt, shorten <=10pt] (2) -- (3);
    \draw[->,  thick, shorten >=10pt, shorten <=10pt] (3) -- (4);
    \draw[->,  thick, shorten >=10pt, shorten <=10pt] (4) -- (1);
    \draw[->,  thick, shorten >=10pt, shorten <=10pt] (4) -- (2);    
    \draw[->,  thick, shorten >=10pt, shorten <=10pt] (5) -- (4);
    \draw[->,  thick, shorten >=10pt, shorten <=10pt] (6) -- (1);    
\end{tikzpicture}
}.
$$
\ee
\end{example}

\vskip 2em 

\section{Bosonic extensions for quantum groups} \label{Sec: BE}

In this section, we recall the bosonic extension for quantum groups (\cite{HL15, OP25, JLO24, KKOP25A, KKOP24E}) and its cluster algebra structure arising from $i$-boxes.

\subsection{Bosonic extensions}
Let $\cmC = (c_{i,j})_{i,j\in I}$ be a Cartan matrix of \emph{finite $ADE$ type} and let $\calU_q(\g)$ be the corresponding quantum group. We denote by $\Br = \ang{ \bg_i \mid i \in I }$ the braid group of $\g$.

For integers $m \ge n \ge 0$, we define the $q$-integers, factorials, and binomial coefficients by
\[
[n]_q := \frac{q^n-q^{-n}}{q-q^{-1}}, \qquad [n]_q! := \prod_{t=1}^n [t]_q, \qquad \begin{bmatrix} m\\ n\end{bmatrix}_q := \frac{[m]_q!}{[n]_q!\,[m-n]_q!}.
\]
The \emph{bosonic extension} $\hcalA$ of $\calU_q^-(\g)$ is the $\kk$-algebra generated by $\{ f_{i,p} \}_{(i,p) \in I \times \Z}$ subject to the following relations:
for any $i,j \in I$ and $m,p \in \Z$,
\bna
\item \label{it: relation1}
$ \displaystyle
\sum^{1-c_{i,j} }_{t=0} (-1)^t  \begin{bmatrix} 1-c_{i,j} \\ t \end{bmatrix}_{q}  {f_{i,p}}^{1-c_{i,j} -t} f_{j,p} f_{i,p}^{t} = 0 
$ \qquad  if $i \ne j$, 
\item \label{it: relation2} 
$
f_{i,m} f_{j,p} =  q^{(-1)^{p-m+1}(\al_i,\al_j)}  f_{j,p} f_{i,m} + \delta_{i,j}\delta_{p,m+1}(1- q^{2})
$ \qquad  if $m < p$.
\ee
We define the subalgebras $\hcalA_{\le k}$ (resp.\ $\hcalA_{\ge k}$) as the $\mathbb{Q}(q^{1/2})$-subalgebras of $\hcalA$ generated by $\{f_{i, m} \mid i \in I, m \le k\}$ (resp.\ $\{f_{i,m} \mid i \in I, m \ge k\}$), and set $ \hcalA_{< k} := \hcalA_{\le k-1}$ and $ \hcalA_{> k} := \hcalA_{\ge k+1}$.

For each $i \in I$, we denote by $\TT_i\in\Aut_\kk(\hcalA)$ the $\kk$-algebra automorphism defined by
\begin{align*}
\TT_i(f_{j,m}) = 
\begin{cases}
    f_{j, m+\delta_{i,j}} & \text{if } (\al_i,\al_j) \ne -1, \\
    \dfrac{ q^{1/2} f_{j,m}f_{i,m} - q^{-1/2}f_{i,m}f_{j,m}  }{q - q^{-1}}  & \text{if } (\al_i,\al_j) = -1.
\end{cases}
\end{align*} 
As the family $\{\TT_i\}_{i\in I}$ satisfies the braid relations (\cite{JLO23, KKOP21B, KKOP24E}),
one can write 
\[
\TT_\ttb:=\TT_{i_1}^{\ep_1}\cdots \TT_{i_r}^{\ep_r}
\]
for any $\ttb\in\Br$ written as $\ttb=\bg_{i_1}^{\ep_1}\cdots \bg_{i_r}^{\ep_r}$ with $\ep_t\in\{\pm1\}$.

Let  $\ttb\in\Br^+$ and define 
\[
\hcalA(\ttb):=\hcalA_{\ge 0}\cap \TT_\ttb(\hcalA_{<0}).
\]
Let us take $\uii=(i_1,i_2,\dots,i_r)\in\Seq(\ttb)$, and define
\[
\FF_{\uii, t} := q^{1/2} \TT_{i_1}\cdots \TT_{i_{t-1}}(f_{i_t,0}) \quad \text{ for } 1\le t \le r.
\]
It was shown in \cite{KKOP24E,OP25} that $\hcalA(\ttb)$ is generated by $\FF_{\uii, t}$ for $t\in [1,r]$ as an algebra. The generators $\FF_{\uii, t}$ are called \emph{PBW vectors} of $\hcalA(\ttb)$ with respect to $\uii$ (see \cite{OP25, KKOP24E} for details). We define $\hcalA_\bfA(\ttb)$ to be the $\bfA$-vector space generated by the ordered products
$\FF_{\uii, r}^{a_r} \FF_{\uii, r-1}^{a_{r-1}} \cdots \FF_{\uii, 1}^{a_1}$
for all $(a_1, \ldots, a_r) \in \Z_{\ge0}^{r}$. Since $\hcalA_\bfA(\ttb)$ forms a ring (see \cite[Proposition 7.2]{KKOP25A}), we define the specialization of $\hcalA(\ttb)$ at $q^{1/2}=1$ by
$$
\obbA(\ttb) := \hcalA_\bfA(\ttb) \big/ (q^{1/2}-1)\hcalA_\bfA(\ttb),
$$
and set $\obbA_\C(\ttb) := \C \otimes_{\Z} \obbA(\ttb)$.
We set 
\begin{align} \label{Eq: PBW vector}
\bFF_{\uii, t} := \FF_{\uii, t}|_{q^{1/2}=1} \in \obbA(\ttb) \qquad \text{for $t \in [1,r]$}.
\end{align}
It was shown in \cite{KKOP24E,OP25} that $\obbA(\ttb)$ is the polynomial ring with the generators 
$\bFF_{\uii, t}$ for $t \in [1,r]$.

\subsection{Cluster structure of $\obbA(\ttb)$} \label{Sec: cs for be}
Recall the $i$-boxes and related notations in Section \ref{Sec: i-boxes}.

Let $\ttb \in \Br^+$ and take $\uii=(i_1,i_2,\ldots,i_r) \in \Seq(\ttb)$. 
For any $i$-box $[a,b]$ of $\uii$, we define the element $\mD_{\uii}[a,b] \in \obbA(\ttb)$ inductively as follows:
\begin{equation} \label{Def: minors}
\begin{aligned}
& \mD_{\uii}[k] := \bFF_{\uii, k} \qquad \text{ for any $k\in [1, r]$}, \\
& \mD_{\uii}[a,b]  \mD_{ \uii}[a^+,b^-] = \mD_{\uii}[a^+,b]  \mD_{ \uii}[a,b^-]
- \prod_{ \substack{ j \in I \\ (\al_{i_a},\al_j)=-1}}
\mD_{\uii}[a(j)^+,b(j)^-].
\end{aligned}
\end{equation}
We use the convention that $\mD_{\uii}[\emptyset]=1$. 
Note that the second relation in \eqref{Def: minors} comes from the $T$-systems among affine determinantial modules in the Hernandez--Leclerc category (see Theorem~\ref{thm:Tsystem} and Proposition \ref{cor: K simeq oA}), which 
is a generalization of $T$-systems among \KR modules. For simplicity, we write $ \mD[a,b]$ and $\bFF_{ k}$ for $\mD_{\uii}[a,b]$ and $\bFF_{\uii, k}$, respectively, if no confusion arises.

Let $\frakC=(\frakc_t)_{1\le t\le r}$ be an admissible chain of $i$-boxes associated with $\uii$. 
We now consider the index sets $\sfK(\frakC)$, 
$\sfK(\frakC)^{\mathrm{fr}}$ and $\sfK(\frakC)^{\mathrm{ex}}$ given in \eqref{eq: frakC index}, and define 
\begin{align} \label{eq: frakC cluster}
\sfD(\frakC) &:= \{\, \mD(\frakc_t) \mid t \in \sfK(\frakC) \,\}
\subset  \obbA(\ttb). 
\end{align}

\begin{theorem}[{\cite[Theorem 9.4]{KKOP25}}] \label{thm: mCat of cluster}
The algebra $\obbA(\ttb)$ has a cluster algebra structure with an initial seed 
\[
\seed(\frakC) := \bigl( \mD(\frakC), B(\frakC)\bigr) 
\]
for any admissible chain $\frakC$ of $i$-boxes associated with $\uii \in \Seq(\ttb)$. Here $B(\frakC)$ is the exchange matrix arising from $\frakC$ in Definition \ref{def: mon_seed_ibox}.
\end{theorem}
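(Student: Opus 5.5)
This statement is quoted from \cite[Theorem 9.4]{KKOP25}; the plan is to deduce it from the quantum result there by specializing at $q^{1/2}=1$.

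\textbf{Step 1: the distinguished chain.} First I treat the chain $\frakC_{\calR}$, for which $\frakc_t=\{1,t]$ and $Q(\frakC_{\calR})$ is the GLS quiver of $\uii$. At this chain the elements $\mD_{\uii}[\{1,t]]$ are the bosonic-extension analogues of the unipotent quantum minors. For $\hcalA(\ttb)$ one has a quantum cluster structure with initial seed given by these minors and the matrix \eqref{eq: exchange matrix for RRR}; this is the Gei\ss--Leclerc--Schr\"oer/Goodearl--Yakimov type theorem, transported via the braid symmetries $\TT_i$. The ingredients are:
\begin{itemize}
\item the PBW generation of $\hcalA(\ttb)$ by the vectors $\FF_{\uii,t}$;
\item the triangular, unitriangular relation between the minors $\mD[\{1,t]]$ and the ordered PBW monomials;
\item the $T$-system \eqref{Def: minors}, which supplies the exchange relations at every mutable vertex.
\end{itemize}
Together these show that the initial cluster generates the fraction field and that each one-step mutation lands back in the algebra.

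\textbf{Step 2: equality of the cluster algebra with $\obbA(\ttb)$.} The inclusion of the upper cluster algebra into the algebra follows from a starfish-type argument. Since $\obbA(\ttb)$ is a polynomial ring in the $\bFF_{\uii,t}$, it is factorial. The initial cluster variables and their one-step mutations are pairwise coprime irreducible elements, so the upper cluster algebra is contained in $\obbA(\ttb)$. For the reverse inclusion, I write each PBW vector $\bFF_{\uii,k}=\mD[k]$ as a cluster variable reached by mutations from $\Sigma(\frakC_{\calR})$. This uses the $T$-system recursively, shrinking boxes $[a,b]$ to $[a^+,b]$ and $[a,b^-]$. The specialization is legitimate because $\hcalA_\bfA(\ttb)$ is a free $\bfA$-form spanned by PBW monomials and the exchange relations are integral.

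\textbf{Step 3: arbitrary admissible chains.} Every admissible chain is reached from $\frakC_{\calR}$ by box moves. I show that a box move $\calB_k$ acts on seeds either as a mutation at vertex $k$ or as a relabeling permutation. In the mutation case:
\begin{itemize}
\item the new variable $\mD(\calB_k(\frakC)_k)$ is obtained from the $T$-system \eqref{Def: minors} applied to the envelope $\tfrakc_{k+1}$;
\item the new matrix equals $\mu_k B(\frakC)$, which I verify by comparing the four cases in Definition~\ref{def: mon_seed_ibox}.
\end{itemize}
It then follows that $\Sigma(\frakC)$ is mutation equivalent to $\Sigma(\frakC_{\calR})$, so it generates the same cluster algebra.

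\textbf{Main obstacle.} The hardest step is the combinatorial verification that box moves transform $B(\frakC)$ exactly by matrix mutation. The four conditions in Definition~\ref{def: mon_seed_ibox} must be tracked through each move. I would organize this by the relative position of $\frakc_k$ inside $\tfrakc_{k+1}$ and check only the arrows incident to neighbors of color adjacent to $\bcol(\frakc_k)$.
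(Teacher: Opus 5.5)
The paper does not prove this statement; it quotes it from Kashiwara--Kim--Oh--Park, where it comes out of monoidal categorification (cf.\ Proposition~\ref{cor: K simeq oA}). Your Step~3 is fine, and so is the inclusion $\obbA(\ttb)\subseteq\ccA(\seed(\frakC))$, since each $\bFF_{\uii,k}=\mD_{\uii}[k]$ is reached by box moves (Proposition~\ref{prop:boxmove_seed}). The gap is the opposite inclusion: that \emph{every} cluster variable, obtained by an arbitrary mutation sequence, lies in $\obbA(\ttb)$. This is the heart of the theorem, and box moves only reach the finitely many seeds $\seed(\frakC)$.

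Step~1 does not supply this inclusion. The braid symmetries $\TT_i$ cannot transport a Gei\ss--Leclerc--Schr\"oer/Goodearl--Yakimov theorem to $\hcalA(\ttb)$, because for a general positive braid $\hcalA(\ttb)$ is not a quantum unipotent coordinate ring. Already for $\uii=(i,i)$ one has $\FF_{\uii,2}=q^{1/2}f_{i,1}$ with the inhomogeneous relation $f_{i,0}f_{i,1}=q^2f_{i,1}f_{i,0}+(1-q^2)$. A quantum cluster structure on $\hcalA(\ttb)$ with initial seed at $\frakC_{\calR}$ is just the quantum form of what you want to prove. Your specialization step is also unjustified: integrality of the exchange relations only gives Laurent expansions over $\bfA$, not membership of all cluster variables in $\hcalA_\bfA(\ttb)$.

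The starfish argument in Step~2 rests on two unproved hypotheses.
\begin{enumerate}[(1)]
\item All one-step mutations of a single seed must lie in $\obbA(\ttb)$. But \eqref{Def: minors} only yields exchange relations realized by box moves, and in $\frakC_{\calR}$ only $\frakc_1$ is movable. For example, take type $A_1$ with $\uii=(1,1,1)$, so $\frakC_\calR=([1,1],[1,2],[1,3])$. Then $\mu_2$ replaces $\mD[1,2]=\bFF_1\bFF_2-1$ by $\bFF_3$ via $\mD[1,2]\,\bFF_3=\mD[1,3]+\bFF_1$, where $\mD[1,3]=\bFF_1\bFF_2\bFF_3-\bFF_1-\bFF_3$. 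This relation is not an instance of \eqref{Def: minors}, and $\{[1,1],[3,3],[1,3]\}$ is not an admissible chain. In general you would have to identify each mutated variable and prove it is a polynomial in the $\bFF_{\uii,t}$.
\item The coprimality conditions are simply asserted: the $\mD(\frakc_t)$ must be pairwise coprime, and each $x_k$ coprime to $x_k'$, in $\C[\bFF_{\uii,1},\dots,\bFF_{\uii,r}]$. This is the substantive content of any starfish argument.
\end{enumerate}

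The cited proof avoids both problems. The $T$-system holds as a short exact sequence of affine determinantial modules (Theorem~\ref{thm:Tsystem}), which makes $(\{M^{\uii}(\frakc_t)\}_t,B(\frakC))$ an admissible monoidal seed in $\scrC_\g^{\bbD}(\ttb)$. The general monoidal categorification machinery then shows that every cluster variable is the class of a real simple module in $\scrC_\g^{\bbD}(\ttb)$. Hence every cluster variable lies in $K(\scrC_\g^{\bbD}(\ttb))\simeq\obbA(\ttb)$.
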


The box moves $\calB_k$ of $\frakC$ can be understood as mutations (see \cite[Section 5.2]{KKOP24A} and
also \cite[Section 2.1]{CQW26}).

\begin{proposition}[{\cite[Section 5.2]{KKOP24A}}] \label{prop:boxmove_seed}
Let $\frakC$ be an admissible chain of $i$-boxes and let $\frakc_k$ be a movable
$i$-box in $\frakC$. 
Then the seeds $\seed(\frakC)$ and $\seed(\calB_k(\frakC))$ are related by 
$$
\seed(\calB_k(\frakC))=
\begin{cases}
\mu_k(\seed(\frakC)) & \mbox{if $\bcol(\frakc_k)=\bcol(\frakc_{k+1})$}, \\
\sigma_{k,k+1}(\seed(\frakC)) & \mbox{if $\bcol(\frakc_k)\neq\bcol(\frakc_{k+1})$},
\end{cases}
$$
where $\mu_k$ denotes the mutation at vertex $k$ and $\sigma_{k,k+1}$ denotes the permutation of the vertices labeled $k$ and $k+1$. 
In particular, we have
\[
   B(\calB_k(\frakC)) =
        \begin{cases}
            \mu_k(B(\frakC)) & \mbox{if $\bcol(\frakc_k)=\bcol(\frakc_{k+1})$}, \\
            \sigma_{k,k+1}(B(\frakC)) & \mbox{if $\bcol(\frakc_k)\neq\bcol(\frakc_{k+1})$}.
        \end{cases}
\]

\end{proposition}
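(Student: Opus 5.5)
The plan is to compare the two seeds entry by entry: first the cluster variables, then the exchange matrices. The cluster part reduces to the $T$-system \eqref{Def: minors}, and the matrix part reduces to a case analysis of Definition~\ref{def: mon_seed_ibox}. Throughout, fix a movable $\frakc_k$ and write $\frakC'=\calB_k(\frakC)$ with envelopes $\tfrakc'_t$.

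\textbf{Unchanged boxes.} Box moves only alter $\calH_{k-1},\calH_k$ (and the starting point $c$ when $k=1$). Hence $\tfrakc'_t=\tfrakc_t$ for $t\ne k$, since $\tfrakc_{k+1}$ is recovered from $\tfrakc_{k-1}$ by adding both neighbours. It follows that $\frakc'_t=\frakc_t$ for $t\notin\{k,k+1\}$. Say $\tfrakc_{k+1}=[\ta,\tb]$ and $\tfrakc_{k-1}=[\ta+1,\tb-1]$ (the case $k=1$ is analogous). Assume $\calH_{k-1}=\calL$ and $\calH_k=\calR$; the opposite case is symmetric. Then
\[
\frakc_k=[\ta,\tb-1\},\quad \frakc_{k+1}=\{\ta,\tb],\quad \frakc'_k=\{\ta+1,\tb],\quad \frakc'_{k+1}=[\ta,\tb\}.
\]

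\textbf{Cluster variables.} If $i_\ta\ne i_\tb$, then $\{\ta,\tb]=\{\ta+1,\tb]$ and $[\ta,\tb\}=[\ta,\tb-1\}$. So the two boxes are simply swapped, which gives the permutation $\sigma_{k,k+1}$. If $i_\ta=i_\tb$, then $\frakc_{k+1}=\frakc'_{k+1}=[\ta,\tb]$, while $\frakc_k=[\ta,\tb^-]$ and $\frakc'_k=[\ta^+,\tb]$. The $T$-system \eqref{Def: minors} then reads
\[
\mD[\ta,\tb^-]\,\mD[\ta^+,\tb]=\mD[\ta,\tb]\,\mD[\ta^+,\tb^-]+\prod_{j\,:\,(\al_{i_\ta},\al_j)=-1}\mD[\ta(j)^+,\tb(j)^-].
\]
I must check that this is the exchange relation of $\mu_k$ in $\seed(\frakC)$. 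Concretely, each factor on the right must be a variable of $\frakC$ (or $1$), adjacent to $k$ with the correct sign and multiplicity. The boxes $[\ta,\tb]$ and $[\ta^+,\tb^-]$ appear in $\frakC$ as $\frakc_{k+1}$ and as the last box of color $i_\ta$ inside $\tfrakc_{k-1}$. The boxes $[\ta(j)^+,\tb(j)^-]$ are the last $j$-colored boxes inside $\tfrakc_{k-1}$, because every envelope is an interval and the last box of each color in an envelope spans that color's extreme occurrences. Reading off Definition~\ref{def: mon_seed_ibox}(c), the horizontal arrows join $k$ with $\frakc_{k+1}$ and with $[\ta^+,\tb^-]$ in opposite directions, and the conditions (1)--(4) produce exactly the arrows to the $j$-boxes, all with the other sign. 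This identifies the relation with the mutation at $k$.

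\textbf{Exchange matrices.} It remains to show $B(\frakC')=\mu_k(B(\frakC))$, respectively $\sigma_{k,k+1}(B(\frakC))$. In the permutation case the boxes and the effective-end data are merely relabelled. The conditions (1)--(4) involve only boxes, their $\pm$ neighbours and effective ends, so the matrix is permuted; I must verify that $z$ changes consistently. In the mutation case I compare $\bar B(\frakC')$ with the Fomin--Zelevinsky formula entrywise. Entries not involving $k$ change only for pairs $(p,q)$ with $p\to k\to q$, and here one uses the adjacency list of $k$ obtained in the previous step. Frozen indices are the same for both chains, since $\frakc_k$ is mutable in this case. Alternatively, once the variables match, the matrix is determined: cluster algebras of this type have a unique exchange matrix compatible with the cluster, since the matrix is recovered from the exchange relations. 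This uses Theorem~\ref{thm: mCat of cluster} for both chains, and it shortcuts the combinatorics.

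The main obstacle is this last step: the combinatorial verification of arrows between non-$k$ vertices under mutation, especially conditions (1)--(4) near the boundary and for $k=1$. I would try the uniqueness shortcut first, and fall back on a case analysis only if needed.
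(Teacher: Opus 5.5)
\textbf{The paper gives no proof of this statement}; it imports it from \cite[Section 5.2]{KKOP24A}, with the explicit matrices of \cite{KK24}. So your argument has to stand on its own.

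\textbf{What is correct.} Your bookkeeping is right:
\begin{itemize}
\item only $\frakc_k$ and $\frakc_{k+1}$ change;
\item when $i_{\ta}\neq i_{\tb}$ the two boxes are swapped, and each box keeps its effective end, so $\bar B$ is merely relabelled;
\item when $i_{\ta}=i_{\tb}$ the $T$-system is the right candidate exchange relation. Here $[\ta^+,\tb^-]$ and the boxes $[\ta(j)^+,\tb(j)^-]$ are the full boxes of their colours in $\tfrakc_{k-1}$, hence lie in $\frakC$.
\end{itemize}

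\textbf{One sign correction.} In your setup the horizontal rule gives both $b_{[\ta,\tb],\frakc_k}=1$ and $b_{[\ta^+,\tb^-],\frakc_k}=1$. So both arrows point into $k$, not in opposite directions. Condition (1), with $[x,y]=\frakc_k$ and $[x,y^+]=[\ta,\tb]$, gives $b_{\frakc_k,[\ta(j)^+,\tb(j)^-]}=1$. Only with this sign pattern does the exchange binomial reproduce the $T$-system. You must also check that no other box is joined to $k$.

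\textbf{The gap.} The genuine gap is the identity $B(\calB_k(\frakC))=\mu_k(B(\frakC))$ in the mutation case, which is the real content of the proposition. Your preferred shortcut does not work, for two reasons.
\begin{itemize}
\item Exchange relations do not recover the matrix. The seeds $(\mathbf{x},B)$ and $(\mathbf{x},-B)$ have identical exchange binomials in every direction, and $\mu_k(-B)=-\mu_k(B)$. So they generate the same cluster algebra with the same clusters. Even the full list of exchange binomials determines each column of $B$ only up to sign.
\item The correct uniqueness theorem (a seed is determined by its cluster; Gekhtman--Shapiro--Vainshtein, Cao--Li) holds only inside a single mutation class. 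Knowing that $\seed(\calB_k(\frakC))$ lies in the mutation class of $\seed(\frakC)$ is exactly what is to be proved. Theorem~\ref{thm: mCat of cluster}, applied to $\frakC$ and $\calB_k(\frakC)$ separately, cannot supply this: its conclusion for $\calB_k(\frakC)$ is unchanged if $B(\calB_k(\frakC))$ is replaced by $-B(\calB_k(\frakC))$.
\end{itemize}

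\textbf{What remains to be done.} The entrywise comparison is therefore unavoidable, and it is not routine. Passing to $\calB_k(\frakC)$ does two things:
\begin{itemize}
\item it replaces $[\ta,\tb^-]$ by $[\ta^+,\tb]$;
\item it moves the effective end of the unchanged box $[\ta,\tb]$ from $\tb$ to $\ta$.
\end{itemize}
Both changes alter the membership tests $[x,y^+]\in\frakC$ and $[x'^-,y']\in\frakC$, and the effective-end tests in (1)--(4), including for pairs not containing $k$. You must show that the resulting changes of $\bar B$ are exactly the Fomin--Zelevinsky corrections, and nothing else. These corrections are $b_{p,q}\mapsto b_{p,q}+1$ for the $2$-paths $p\to k\to q$, with $p\in\{[\ta,\tb],[\ta^+,\tb^-]\}$ and $q$ a box $[\ta(j)^+,\tb(j)^-]$, including cancellation of any existing arrow $q\to p$. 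They come together with the sign reversal of the row and column at $k$.

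As it stands, your proposal proves the permutation case, and the cluster-variable part once column $k$ is checked. It does not prove the matrix identity in the mutation case.
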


\begin{remark}
In the case for $\bcol(\frakc_k)\neq\bcol(\frakc_{k+1})$ in Proposition \ref{prop:boxmove_seed}, the clusters $\mD(\frakC)$ and $\mD(\calB_k(\frakC))$ differ by $\sigma_{k,k+1}$ as ordered sets, but they coincide as usual sets. 
\end{remark}

Thanks to Theorem \ref{thm: mCat of cluster}, we have an isomorphism of $\C$-algebras
\[
 \ccA(\seed(\frakC)) \simeq \obbA_\C(\ttb),
\]
where $\ccA(\seed(\frakC))$ is the partially compactified cluster algebra over $\C$ in Definition \ref{Def: cluster alg}.

For any $\uii = (i_1, \ldots, i_r) \in \Seq(\ttb)$, we set  
\begin{align*} 
\pP(\uii) := \C[x_{1}, x_{2}, \ldots ,x_{r}]
\end{align*}
to be the polynomial ring in $r$ variables $x_{k}$ ($k\in [1,r]$). Then the PBW vectors of $\obbA(\ttb)$ associated with $\uii$ give an isomorphism 
\begin{align*}
\psi_{\uii} : \pP(\uii) \buildrel \sim \over \longrightarrow \obbA_\C(\ttb), 
\qquad \text{ $x_{k} \mapsto \bFF_{\uii, k}$ for any $ k\in [1,r]$.} 
\end{align*}
We denote by $\tobbA(\ttb)$ (resp.\ $\tobbA_\C(\ttb)$) the localization of $\obbA(\ttb)$ (resp.\ $\obbA_\C(\ttb)$) at the frozen variables 
\begin{align*} 
\rmF_{t} := \mD(\frakc_t) \qquad \text{for all $t\in \sfK(\frakC)^{\mathrm{fr}}$},
\end{align*} 
and set 
$\tpP(\uii)$ to be the localized ring of $\pP(\uii)$ by the elements $\psi_\uii^{-1}(\rmF_{t})$ for all $t\in \sfK(\frakC)^{\mathrm{fr}}$. 
Using the same notation, we have 
\begin{align} \label{Eq: iso PA}
\psi_{\uii} : \tpP( \uii ) \buildrel \sim \over \longrightarrow \tobbA_\C(\ttb), 
\qquad \text{ $x_{ k } \mapsto \bFF_{\uii, k}$ for any $ k\in [1, r]$.} 
\end{align}
This induces an isomorphism of algebras
\[
\cA(\seed(\frakC)) \simeq \tobbA_\C(\ttb) \simeq \tpP( \uii),
\]
where  $\cA(\seed(\frakC))$ denotes the cluster algebra over $\C$ in Definition \ref{Def: cluster alg}.

\begin{example}\label{ex: ibox seed}
We keep all notations appearing in Example \ref{ex: ibox exchange matrix}.
\bnum
\item \label{ex: ibox seed (i)}
The exchange matrix $B(\frakC_\calR)$ is given in Example \ref{ex: ibox exchange matrix} \eqref{ex: ibox exchange matrix (i)} and the cluster $\mD(\frakC_\calR)  = (\mD(\frakc_t'))_{t\in [1,6]} $ is computed by using the T-systems \eqref{Def: minors}  as follows:
\bna
\item $\mD(\frakc_1') = \mD[1] = \bFF_{1}$, 
\item $\mD(\frakc_2') = \mD[2] = \bFF_{2}$,
\item $\mD(\frakc_3') = \mD[3] = \bFF_{3}$,
\item $\mD(\frakc_4') = \mD[1,4] = \mD[1]\mD[4] - \mD[2]\mD[3] = \bFF_{1}\bFF_{4} - \bFF_{2}\bFF_{3}$,
\item $\mD(\frakc_5') = \mD[1,5] = (\mD[1,4]\mD[4,5] - \mD[2]\mD[3])\mD[4]^{-1} = 
\bFF_{1}\bFF_{4} \bFF_{5} - \bFF_{2}\bFF_{3}\bFF_{5} - \bFF_{1}$,
\item $\mD(\frakc_6') = \mD[3,6] = \mD[3]\mD[6] - \mD[4,5] = \bFF_{3}\bFF_{6} - \bFF_{4}\bFF_{5}+1$,
\ee
where $\mD[4,5] = \bFF_4 \bFF_5 - 1$.

\item \label{ex: ibox seed (ii)}
The exchange matrix $B(\frakC)$ is given in Example \ref{ex: ibox exchange matrix} \eqref{ex: ibox exchange matrix (ii)} and the cluster $\mD(\frakC) = (\mD(\frakc_t))_{t\in [1,6]}$ is computed  by using the T-systems \eqref{Def: minors} and (i) as follows:
\bna
\item $\mD(\frakc_1) = \mD[3] = \bFF_{3}$, 
\item $\mD(\frakc_2) = \mD[2] = \bFF_{2}$,
\item $\mD(\frakc_3) = \mD[4] = \bFF_{4}$,
\item $\mD(\frakc_4) = \mD[1,4] = \bFF_{1}\bFF_{4} - \bFF_{2}\bFF_{3}$,
\item $\mD(\frakc_5) = \mD[1,5]  = 
\bFF_{1}\bFF_{4} \bFF_{5} - \bFF_{2}\bFF_{3}\bFF_{5} - \bFF_{1}$,
\item $\mD(\frakc_6) = \mD[3,6] = \bFF_{3}\bFF_{6} - \bFF_{4}\bFF_{5}+1$.
\ee
Since we have 
$$
\frakC=\calB_1( \calB_3 (\calB_2(\calB_1(\frakC_{\calR})))),
$$
 the exchange matrix $B(\frakC)$ can be obtained from $ B(\frakC_\calR)$ by
\[
B(\frakC) = \sigma_{1,2} \circ \mu_3 \circ \sigma_{2,3} \circ \sigma_{1,2}
\bigl(B(\frakC_{\calR})\bigr).
\]
The cluster variables $\mD(\frakC)$ can also be obtained by applying the same operations 
$\sigma_{1,2}, \sigma_{2,3}, \mu_3, \sigma_{1,2}$ to $\mD(\frakC_\calR)$ successively.
We have 
\begin{align*}
\mD(\frakc_1) = \mD(\frakc_3') \qtq \mD(\frakc_t) = \mD(\frakc_t')\quad \text{ for $t\in \{2,4,5,6 \},$}   
\end{align*}
$\mD(\frakc_3)$ can be obtained by mutating at the vertex corresponding to $\mD(\frakc_1')$.

\ee
\end{example}

\vskip 2em

\section{Double inductive weaves} \label{Sec: weave}

The Demazure weaves are combinatorial objects which provide initial seeds for the coordinate ring of a braid variety. In this section, we recall the notation for double inductive weaves following \cite{CGGLSS25} for our purposes.

Recall the half twist $\Delta = \ttb(w_\circ)$ given in \eqref{Eq: D w0}.
The \emph{Demazure product} $\delta: \Br^+ \to \weyl$ is the map defined inductively by $\delta(\sigma_i) = s_i$ $(i\in I)$ and, for any $\ttb\in \Br^+$,  
\begin{align} \label{Eq: Dp}
\delta(\ttb \sigma_i) = 
\begin{cases}
\delta(\ttb) s_i & \text{if } \ell(\delta(\ttb)s_i) > \ell(\delta(\ttb)), \\
\delta(\ttb) & \text{if } \ell(\delta(\ttb)s_i) < \ell(\delta(\ttb)).
\end{cases}
\end{align}

\subsection{Demazure weaves} \label{Sec: DW}

A Demazure weave $\fW \subset \R^2$ is a planar graph where each edge is labeled by a simple root index $i \in I$. The vertices of the graph fall into specific types as depicted in Figure \ref{fig:vertex_type}: 6-valent vertices, 4-valent vertices, and 3-valent vertices. We also call the three diagrams in Figure \ref{fig:vertex_type} \emph{weave moves}.
By convention, we assume that all weaves are oriented downwards, and use the notation $\fW: \uii \to \ujj$ to specify that a weave $\fW$ starts from the top with $\uii$ and ends at the bottom with $\ujj$.

\begin{figure}[htbp]
  \begin{tikzpicture}[scale=1]
	\draw [ultra thick] 
         (0,0) -- (0,-1) (0,0) -- (-1,1) (0,0) -- (1,1);
         \draw [ultra thick, color=purple] 
         (0,0) -- (0,1) (0,0) -- (-1,-1) (0,0) -- (1,-1);
         \node[above] at (-1,1) {$i$};
         \node[above] at (0,1) {\color{purple}$j$};
         \node[above] at (1,1) {$i$};
  \end{tikzpicture}
   \qquad \qquad
  \begin{tikzpicture}[scale=1]
	\draw [ultra thick] 
         (-1,1) -- (1,-1);
         \draw [ultra thick, color=teal] 
         (1,1) -- (-1,-1);
         \node[above] at (-1,1) {$i$};
         \node[above] at (1,1) {\color{teal}$k$};
  \end{tikzpicture}  
\qquad \qquad
  \begin{tikzpicture}[scale=1]
	\draw [ultra thick] 
         (0,0) -- (0,-1) (0,0) -- (-1,1) (0,0) -- (1,1);         
         \node[above] at (-1,1) {$i$};
         \node[above] at (1,1) {$i$};
  \end{tikzpicture}  
  \caption{Three types of vertices in weaves, where $j$ is adjacent to $i$ in the Dynkin diagram of $\g$ while $k$ is not.}
  \label{fig:vertex_type}
\end{figure}

Let $\uii \in \Seq(\ttb) $ for $ \ttb \in \Br^+$ and let $\ujj \in \Seq (\delta(\ttb))$.
As horizontal slices of a weave $\fW: \uii \to \ujj$ can be viewed as expression sequences of elements in $\Br^+$, one can understand that a weave $\fW$ represents a history of the weave moves applied to $\uii$ at the top. Note that the weave moves are compatible with the Demazure product (see \eqref{Eq: Dp}). When we would not emphasize the explicit description of $\ujj$,  
by abuse of notation, we may replace $\ujj$ with $\delta(\uii) := \delta(\ttb)$ in the weave, i.e., $\fW: \uii \to \delta(\uii)$. Throughout this paper, a weave $\fW: \uii \to \delta(\uii)$ will be referred to as a Demazure weave on $\uii$.

Given a Demazure weave $\fW$, we associate to it a quiver $Q(\fW)$. The vertex set of $Q(\fW)$ is identified with the set $\fW_3$ of trivalent vertices in $\fW$, and 
the arrows, as well as the set $\mathcal{F}\subset \fW_3$ of frozen vertices, are determined by the local intersections of \emph{Lusztig cycles} at $\fW_3$ (see \cite[Definition~4.26]{CGGLSS25} for details).

An \emph{indexed} weave is a weave with a bijection from the set $\fW_3$ of trivalent vertices to $\{ 1,2,\ldots,m \}$, where $m:= |\fW_3|$ is the total number of trivalent vertices. If $\fW$ is indexed, then the quiver $Q(\fW)$ can be understood as a labeled quiver whose vertices are in bijection with $\{ 1,2,\ldots,m \}$.

Two weaves $\fW_1, \fW_2: \uii \to \ujj$ are said to be \emph{equivalent} if they are related by a sequence of the following local moves:
\bna
    \item isotopies of strands corresponding to braid relations in $\Br^+$,
    \item local moves involving $i j i j \to j i j j \to j i j$ and $i j i j \to i i j i \to i j i \to j i j$ for adjacent $i$ and $j$,
    \item local moves involving $i k i \to i i k \to i k \to k i$ and $i k i \to k i i \to k  i $ for non-adjacent $i$ and $k$. 
\ee
Note that these local moves involve at most one trivalent vertex. Hence, by transporting the
indexing, these moves also define an equivalence relation on the set of indexed weaves.

Equivalent (indexed) weaves induce the same (labeled) quivers and cluster variables (see, e.g., \cite[Section 4.2 and Proposition 6.6]{CGGLSS25}). By abuse of notation, we will often denote by $\fW$ the equivalence class of an (indexed) weave.

\subsection{Double inductive weaves} \label{Sec: diw}

A \emph{double string} is a sequence of pairs
\[
\bfs=(i_1 \bfX_1, i_2 \bfX_2, \ldots, i_m \bfX_m),
\]
where $i_t\in I$ and $\bfX_t\in \{\bfL, \bfR\}$.
Each double string $\bfs$ determines a sequence
$\uii_0, \uii_1, \ldots, \uii_m$ recursively,
where $ i_t  \bfX_t$ specifies whether $ i_t $ is added on the left or the right of $\uii_{t-1}$ according to $\bfX_t$. More precisely, set $\uii_0 := \emptyset$ and for $t=1,\ldots,m$, define inductively
\[
\uii_t :=
\begin{cases}
(i_t)* \uii_{t-1} & \text{if } \bfX_t=\bfL,\\
\uii_{t-1}* (i_t) & \text{if } \bfX_t=\bfR,
\end{cases}
\]
where $\ujj_1*\ujj_2$ denotes the concatenation of $\ujj_1$ and $\ujj_2$.
We denote by $\uii(\bfs)$ the final sequence $\uii_m$, which is called the \emph{$\uii$-sequence} of $\bfs$.
If adding the index $i_t$ increases the length of the
Demazure product, i.e., \( \ell(\delta(\uii_t))=\ell(\delta(\uii_{t-1}))+1 \),
we put the symbol ``$+$'' as a superscript on $i_t\bfX_t$ for more clarity.

For any double string $\bfs$, we define the corresponding \emph{double inductive weave} 
$$
\fW(\bfs): \uii(\bfs) \to \delta(\uii(\bfs))
$$  
as follows.
We start with the empty weave.  At the $t$-th stage, 
we add one strand to the current weave according to $i_t\bfX_t$
and apply a sequence of weave moves to the bottom of the diagram:
\begin{enumerate}
    \item \textbf{Length-additive step ($i_t \bfX_t^+$):} 
    If $\ell(\delta(\uii_t)) = \ell(\delta(\uii_{t-1})) + 1$, then we add a strand of color $i_t$ on the side specified by $\bfX_t$. This extends the sequence without introducing new trivalent vertices.
    
    \item \textbf{Non-additive step ($i_t \bfX_t$):} 
    If $\ell(\delta(\uii_t)) = \ell(\delta(\uii_{t-1}))$, then the Demazure product already has a descent at $i_t$ on the side $\bfX_t$. 
    We add a strand of color $i_t$ to the side of the current weave specified by $\bfX_t$, and merge the new strand with the current weave to make a 3-valent vertex on the side $\bfX_t$ using 6-valent and 4-valent moves if necessary.  
\end{enumerate}
Since the trivalent vertices occur only in the non-additive steps, 
we order the set $\fW_3 (\bfs)$ of trivalent vertices from top to bottom according to the non-additive steps. This gives an indexing of the vertex set $\fW_3 (\bfs)$ by $\{1,2,\ldots,m\}$, where $m:=|\fW_3(\bfs)|$. In this sense, we consider $Q(\fW(\bfs))$ as a labeled quiver.
We set 
$$
Q(\bfs) := Q(\fW(\bfs))
$$ 
to be the quiver associated with the double string $ \bfs$.
Note that the resulting weave $\fW(\bfs): \uii(\bfs)\to\delta(\uii(\bfs))$
is well-defined up to weave equivalence.

A double inductive weave is called a \emph{right inductive weave} if the
associated double string is
$
(i_1\bfR, i_2\bfR, \ldots,i_m\bfR).
$
In this case, putting $ \uii = (i_1, i_2, \ldots, i_m)$, we write $\rind{\uii}$ for this right inductive weave for brevity, and call it the right inductive weave associated with $\uii$.

Let $\rxw  \in \Seq(\Delta)$ be an expression sequence of the half twist $\Delta$ defined in  \eqref{Eq: D w0}. 
By definition, for any $\uii \in \Seq(\ttb)$, we have $\delta(\rxw  \uii) = w_\circ $.  
Then the quiver of the right inductive weave $\rind{\rxw \uii}$ is described recursively as follows.

\begin{proposition}[{\cite[Proposition 4.44]{CGGLSS25}}] \label{prop: right inductive weave quiver}
Let $\rxw  \in \Seq(\Delta)$, $i \in I$ and $\ttb \in \Br^+$. We take $\uii \in \Seq(\ttb)$ and set  $ \uii' := \uii*(i) $ to be the concatenation of $\uii$ and $(i)$.
For $j\in I$, we denote by $f(j)$  the unique frozen vertex associated with $j$ in the quiver $Q({\rind{\rxw \uii}})$ (if such a vertex exists). The quiver $Q({\rind{\rxw  \uii' }})$ is obtained from $Q({\rind{\rxw  \uii}})$ by the following procedure:
\bna
    \item Make the vertex $f(i)$ mutable and add a new frozen vertex $f'(i)$, together with an arrow $f(i) \to f'(i)$.
    \item If $j \in I$ is adjacent to $i$ in the Dynkin diagram and the frozen vertex $f(j)$ was added after $f(i)$ in the inductive process, add an arrow $f(j) \to f(i)$.
\ee
\end{proposition}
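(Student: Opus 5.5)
The plan is to follow the construction of $\rind{\rxw\uii'}$ directly. It is obtained from $\rind{\rxw\uii}$ by adding one strand of color $i$ on the right, performing braid moves at the bottom, and merging. We then track how the Lusztig cycles (\cite[Definition~4.26]{CGGLSS25}) of the old trivalent vertices change in the new bottom part. Since $\delta(\rxw\uii)=w_\circ$ and $w_\circ$ has a right descent at every $i\in I$, the step $i\bfR$ is always non-additive. So $\rind{\rxw\uii'}$ consists of $\rind{\rxw\uii}$ followed by three pieces. First, a sequence of $6$-valent and $4$-valent moves turns the bottom reduced word of $w_\circ$ into a reduced word ending in $i$. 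Second, a single trivalent vertex $v$ merges this last $i$ with the new strand. Third, the result is the bottom word. In particular no old arrow between non-frozen vertices can change, because the new part lies entirely below the old weave, and the only new trivalent vertex is $v$. We therefore only need the contributions of the new region to arrows involving frozen vertices and $v$, together with the frozen/mutable status.

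The first step is an invariant to be carried by induction on the length of $\uii$. At the bottom slice of $\rind{\rxw\uii}$, for each $j$ that has a frozen vertex $f(j)$, the Lusztig cycle of $f(j)$ is supported on exactly one bottom edge. It is the rightmost-compatible edge of color $j$, in the sense that it can be transported to the last letter of a reduced word of $w_\circ$ ending in $j$, with weight $1$. Vertices whose cycles die before the bottom are exactly the mutable ones. Granting this, we choose the braid moves bringing $i$ to the end. Under $4$-valent moves, cycles simply pass through. Under $6$-valent moves, the cycle rules transport the weight-$1$ cycle of $f(i)$ to the final $i$-edge.

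At $v$, the cycle of $f(i)$ enters from the upper left and is absorbed: the outgoing weight at the bottom edge is $0$. Hence $f(i)$ no longer reaches the bottom and becomes mutable. The cycle of $v$ itself starts on the outgoing bottom edge and reaches the bottom, so $v=f'(i)$ is frozen. The invariant is restored for $\uii'$. Computing the local intersection form at $v$ with the conventions of \cite[Definition~4.26]{CGGLSS25} gives exactly one arrow $f(i)\to f'(i)$, which is (a).

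For (b), the remaining contributions come from the $6$-valent vertices used to move $i$ to the end. At such a vertex with colors $i,j$ adjacent, the local intersection of the cycles of $f(i)$ and $f(j)$ is nonzero exactly when both cycles pass through that vertex. By the invariant, this happens when the $j$-edge carrying $f(j)$ lies to the right of the $i$-edge carrying $f(i)$, which is equivalent to $f(j)$ having been created after $f(i)$. The sign of the contribution is the one giving $f(j)\to f(i)$. Non-adjacent colors contribute nothing at $4$-valent vertices.

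The main obstacle is this bookkeeping. It requires choosing a specific sequence of braid moves, for instance using $w_\circ = w_\circ s_i\cdot s_i$ with the last occurrences ordered by creation time. Then one must check that the net contribution between $f(i)$ and $f(j)$ is exactly one arrow, with no cancellation, and that no other pair of frozen vertices acquires a net arrow. I would try first to choose the braid moves so that each relevant $f(j)$-cycle meets the moving $i$-strand in exactly one hexavalent vertex. Weave equivalence (\cite[Section~4.2]{CGGLSS25}) would then justify that this choice does not affect $Q$.
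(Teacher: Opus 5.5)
The paper gives no proof of this statement; it quotes \cite[Proposition~4.44]{CGGLSS25} and discards the arrows between frozen vertices. Your outline is the natural one: braid moves followed by a single trivalent vertex $v$, frozen cycles supported on one bottom edge, and absorption of the cycle of $f(i)$ at $v$. It correctly gives the frozen/mutable status and (a).

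Your invariant is misstated, though. The cycle $\gamma_{f(j)}$ does not sit on an edge of color $j$. Write the bottom reduced word of $w_\circ$ as $(j_1,\dots,j_N)$. Then $\gamma_{f(j)}$ sits on the unique bottom edge whose attached root $s_{j_1}\cdots s_{j_{k-1}}(\alpha_{j_k})$ equals $\alpha_{j^*}$. This is the last letter when the word ends in $j$, but its color changes under braid moves. This root description is what actually proves single support: the middle root of a braid move is a sum of two positive roots, so it is never a carrier, whereas a weight-one middle edge would split. It also shows that $\gamma_{f(i)}$ and $\gamma_{f(j)}$ meet at a hexavalent vertex exactly when that move reverses the triple $\alpha_{i^*},\alpha_{i^*}+\alpha_{j^*},\alpha_{j^*}$. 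The colors of that vertex are in general not $i,j$.

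The genuine gap is in (b). The entry between $f(i)$ and $f(j)$ in $Q(\rind{\rxw\uii'})$ is the intersection number over the whole weave. Crossings of $\gamma_{f(i)}$ and $\gamma_{f(j)}$ inside $\rind{\rxw\uii}$ were invisible there because both vertices were frozen, but they count once $f(i)$ becomes mutable. So it is false that only the new region matters. Likewise, the equivalence \emph{carrier of $f(j)$ lies to the right of that of $f(i)$ iff $f(j)$ is younger} depends on the braid moves chosen at earlier stages.

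Here is a counterexample in type $A_3$, with $\rxw=(1,2,1,3,2,1)$, $\uii=(3,2,1,3)$ and $i=2$:
\begin{enumerate}
\item Build $\rind{\rxw\,(3,2,1)}$ with bottom word $(1,2,1,3,2,1)$. There the carriers of $f(3),f(2),f(1)$ are $\alpha_1,\alpha_2,\alpha_3$.
\item Append $3$ via braid moves to $(3,2,3,1,2,3)$. Then $\gamma_{f(1)}$ and $\gamma_{f(2)}$ cross, although neither is absorbed.
\item Append $2$. This last step contributes nothing net between $f(2)$ and the younger $f(1)$, yet (b) demands the arrow $f(1)\to f(2)$.
\end{enumerate}
To rescue your argument you would have to prove that a global choice of braid moves avoiding this exists, and you leave that open.

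A cleaner fix needs no choice at all. At every meeting the two cycles occupy the two outer edges and exchange them. So the local contribution is $\pm1$, with sign determined only by which cycle enters on the left. The pair contributes nothing at trivalent or $4$-valent vertices, and their relative order changes only at such meetings. Hence the total telescopes: it depends only on the relative order when the younger vertex is created (it is then last) and when $f(i)$ is absorbed (it is then last). This gives one net crossing if $f(j)$ is younger and none otherwise. A single local computation then fixes the orientation $f(j)\to f(i)$.
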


\begin{remark}
In \cite{CGGLSS25}, the quiver $Q(\rind{\rxw \uii})$ may include arrows between frozen vertices. But in the convention of this paper, there are no arrows between frozen vertices. Thus we drop all arrows of weight $1/2$ between frozen vertices in \cite[Proposition 4.44]{CGGLSS25}, and write Proposition \ref{prop: right inductive weave quiver} by modifying \cite[Proposition 4.44]{CGGLSS25} slightly for our purposes.  
\end{remark}

\begin{example}\label{ex:initial_variables 1}
Let $\g$ be a simple Lie algebra of type $A_3$ with index set $ I=\{1,2,3\}$.
We take an expression sequence $\rxw \in \Seq(\Delta)$ of the half twist $\Delta$ and set
$$
 \uii:=(2,3,1,2,2,1).
$$
Let $\fW = \rind{\rxw   \uii}$ be the right inductive weave associated with $\rxw  \uii$. Then the quiver $Q(\fW)$  is shown as follows.
$$
\begin{tikzpicture}[scale=0.6]
    \coordinate (1) at (0,0);
    \coordinate (2) at (0,-4);
    \coordinate (3) at (-2,-2);
    \coordinate (4) at (2,-2);
    \coordinate (5) at (2,-4);
    \coordinate (6) at (-2,-4);

    \node[circle, draw=black, minimum size=0.5cm] at (1) {1};
    \node[rectangle, draw=black, minimum size=0.5cm] at (2) {2};
    \node[circle, draw=black, minimum size=0.5cm] at (3) {3};
    \node[circle, draw=black, minimum size=0.5cm] at (4) {4};
    \node[rectangle, draw=black,  minimum size=0.5cm] at (5) {5};
    \node[rectangle, draw=black, minimum size=0.5cm] at (6) {6};

    \draw[->,  thick, shorten >=10pt, shorten <=10pt] (2.north) -- (1.south);            
    \draw[->,  thick, shorten >=10pt, shorten <=10pt] (3.north east) -- (1.south west);  
    \draw[->,  thick, shorten >=10pt, shorten <=10pt] (1.south east) -- (4.north west);  
    \draw[->,  thick, shorten >=10pt, shorten <=10pt] (4.north east) -- (5.south west);  
    \draw[->,  thick, shorten >=10pt, shorten <=10pt] (5.west) -- (3.north east);        
    \draw[->,  thick, shorten >=10pt, shorten <=10pt] (3.north) -- (6.south west);       
\end{tikzpicture}
$$
Note that this quiver coincides with $Q(\frakC_{\calR})^{\mathrm{op}}$ in Example~\ref{ex: ibox exchange matrix} \eqref{ex: ibox exchange matrix (i)}.
\end{example}

\vskip 2em 

\section{Braid varieties}\label{sec: braid varieties}

This section reviews the cluster algebra structures on the coordinate rings of braid varieties and the connection to the (half decorated) double Bott-Samelson varieties, following \cite{CGGLSS25}. This cluster algebra structure is combinatorially encoded by Demazure weaves. In particular, we will focus on the cluster algebra structures arising from double inductive weaves associated with double strings.

\subsection{Braid varieties}


Let $\sfG$ be a simply connected simple algebraic group over $\C$ of \emph{finite $ADE$ type}, and denote by $\g$ the corresponding Lie algebra.
 We fix a Borel subgroup $\sfB$, denote by $\sfB_-$ the opposite one, and set $\sfT := \sfB \cap \sfB_-$. Let $\weyl = \ang{s_i \mid i\in I}$ be the Weyl group associated with $\g$ with the index set $I$. 
Elements of the \emph{flag variety} $\fl := \sfG/\sfB$ are considered as Borel subgroups of $\sfG$ and the identity coset of $\fl$ is called the standard flag corresponding to the Borel subgroup $\sfB$. There is the Bruhat decomposition $\sfG = \bigsqcup_{w \in  \weyl } \sfB w \sfB$, where we identify $\weyl$ with the quotient $N_\sfG(\sfT) / \sfT$.
We say that a pair of flags $(x\sfB, y\sfB) \in \fl \times \fl$ is in \emph{relative position} $w$ if 
$$
x^{-1}y \in \sfB w \sfB,
$$ 
which is denoted by $x\sfB \xrightarrow{w} y\sfB$.

\begin{definition}[{\cite[Section 3.3]{CGGLSS25}}]
Let $\ttb \in \Br^+$ and $\uii = (i_1, \dots, i_{r}) \in \Seq(\ttb)$. 
The \emph{braid variety} $X(\uii)$ is the subvariety of $\fl^{r+1}$ given by
\[
X(\uii) := \left\{ (\sfB_0, \dots, \sfB_{r}) \in \fl^{r+1} \;\middle|\; 
\begin{aligned} 
&\sfB_0 = \sfB, \quad \sfB_{r} =  \delta (\ttb)\sfB, \\
&\sfB_{t-1} \xrightarrow{s_{i_t}} \sfB_{t} \quad \text{for all } 1 \le t \le r
\end{aligned} 
\right\},
\]
where the Demazure product $ \delta (\ttb)$ is understood as an element in $N_\sfG(\sfT)/\sfT$ via the isomorphism $ \weyl \cong N_\sfG(\sfT)/\sfT$.
\end{definition}

By \cite[Lemma 3.1]{CGGLSS25}, different choices of expression sequences of $\ttb$ yield isomorphic varieties.

Following \cite[Section 3.4]{CGGLSS25}, we fix a pinning of the group $\sfG$. This choice determines a homomorphism $\phi_i: \mathrm{SL}_2(\C) \to \sfG$ for each $i \in I$. 
We define the matrix $B_i(z)$ for a parameter $z \in \C$ by
\begin{align}\label{eq: B_i(z)}
B_i(z) := \phi_i  \begin{pmatrix} z & -1 \\ 1 & 0 \end{pmatrix}  = x_i(z) \dot{s}_{i} \qquad 
\text{ for $i\in I$,}
\end{align}
where $x_i(z)$ and $\dot{s}_{i}$ are defined as 
\[
x_i(z) := \phi_i \begin{pmatrix} 1 & z \\ 0 & 1 \end{pmatrix}  \qtq
\dot{s}_{i} := \phi_i   \begin{pmatrix} 0 & -1 \\ 1 & 0 \end{pmatrix}.
\]
Note that  $\dot{s}_{ i } \in N_{\sfG}(T)$ is a representative of the simple reflection $s_i \in \weyl \cong N_{\sfG}( \sfT )/ \sfT $.
For an expression sequence $\uii = (i_{1}, \dots, i_{r}) \in \Seq(\ttb)$ and a sequence of parameters $z = (z_1, \dots, z_r) \in \C^r$, the element $B_{\uii}(z) \in \sfG$ is defined as the ordered product:
\begin{align} \label{Eq: Biz}
B_{\uii}(z) := B_{i_{1}}(z_1)B_{i_{2}}(z_2)\cdots B_{i_{r}}(z_r) \in \sfG.
\end{align}
By \cite[Corollary 3.7]{CGGLSS25}, the braid variety $X(\uii)$ for $\uii = (i_1, \dots, i_r)$ can be realized as an affine subvariety of $\C^r$ via the isomorphism
\begin{align} \label{Eq: Br coord}
X(\uii) \cong \left\{ (z_1, \dots, z_r) \in \C^{r} \;\middle|\; \delta(\ttb)^{-1}B_{\uii}(z) \in \sfB \right\}.
\end{align}
We now understand $z_j \in \C[X(\uii)]$ as the $j$-th coordinate function restricted from the ambient space $\C^r$ under the isomorphism \eqref{Eq: Br coord}. We refer to the tuple $z = (z_1, \ldots, z_r)$ as the \emph{coordinates} on the braid variety $X(\uii)$.

\subsection{Cluster structure} \label{Sec: cs for bv}

Recall Demazure weaves and double strings in Section \ref{Sec: weave}. 

Let $\fW : \uii \to \delta(\uii) $ be a Demazure weave for $\uii = (i_1, i_2, \dots, i_r) \in \Seq(\ttb)$ and 
assign coordinates $z_1, \ldots, z_r$ to the top strands of the weave $\fW$ from left to right. 
Following the inductive procedure of \cite[Section~5.2]{CGGLSS25}, we associate a regular function $A_k(\fW) \in \C[X(\uii)]$ to each $k \in \fW_3$, which is expressed as a polynomial in $z_1, \ldots, z_r$ (see \cite[Section~7.2]{CGGLSS25}). 
We denote the collection of these functions by 
\begin{align} \label{Eq: cv}
\calA(\fW) := \{ A_t(\fW) \}_{t\in [1,m]},
\end{align}
where $m = |\fW_3|$. 
The set $ \calA(\fW)$ together with the labeled quiver $Q(\fW)$ gives an initial seed for $\C[X(\uii)]$.

\begin{theorem}[{\cite[Theorem 1.1]{CGGLSS25}}]\label{thm: CGGLSS cluster structure}
Let $\uii \in \Seq(\ttb)$ and $\fW: \uii \to \delta(\uii)$ be a Demazure weave for $\uii$. 
Then the coordinate ring $\C[X(\uii)]$ of the braid variety $X(\uii)$ has a cluster algebra structure with an initial seed 
\[
\Sigma(\fW) := \bigl(\calA(\fW), Q(\fW) \bigr).
\]
\end{theorem}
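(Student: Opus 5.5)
Since this is \cite[Theorem 1.1]{CGGLSS25}, we only outline how we would establish it. The approach is the standard ``Starfish lemma'' strategy: show that $\C[X(\uii)]$ is contained in, and contains, the (upper) cluster algebra generated by the seed $\Sigma(\fW)$. The argument has three parts: independence of the chosen weave, a computation for one convenient weave, and a codimension-two argument.

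\textbf{Step 1: reduction to one weave.} First we would show that the seed $\Sigma(\fW)$ depends only on the equivalence class of $\fW$. Weave equivalences change neither the Lusztig cycles' local intersection numbers nor the inductively defined functions $A_k(\fW)$.

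Next we would show that any two Demazure weaves $\uii\to\delta(\uii)$ are connected by equivalences and by the ``mutation moves'' $iii\to ii\to i$ versus $iii\to ii \to i$ with the trivalent vertices reassociated. On such a move:
\begin{itemize}
\item the cycles transform by the quiver mutation rule of Section~\ref{subsec: cluster algebra};
\item the functions $A_k$ transform by the exchange relation.
\end{itemize}
This second point is a local $\SL_2$ (or rank-two) computation with the matrices $B_i(z)$ of \eqref{eq: B_i(z)}. Granting this, it suffices to treat a single weave, and we would choose the right inductive weave $\rind{\uii}$.

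\textbf{Step 2: the right inductive weave.} For $\rind{\uii}$ we would argue by induction on $r$, using the fibration $X(\uii')\to X(\uii)$ obtained by appending a letter $i$.
\begin{itemize}
\item In a length-additive step, $X(\uii')\cong X(\uii)\times\C$ up to a torus factor.
\item In a non-additive step, $X(\uii')$ is an open subset of $X(\uii)\times\C$, cut out by the nonvanishing of a single new frozen function.
\end{itemize}
This matches Proposition~\ref{prop: right inductive weave quiver}: the old frozen $f(i)$ becomes mutable and a new frozen $f'(i)$ appears. The $A_k$ are then regular functions given by explicit generalized minors in $z_1,\dots,z_r$. Their number equals $\dim X(\uii)$, and they form a transcendence basis because $X(\uii)$ is birational to a torus via the Deodhar-type chart on which all $A_k\neq0$. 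Thus the cluster torus embeds as an open subset.

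\textbf{Step 3: upper cluster algebra equals coordinate ring (main obstacle).} We would prove:
\begin{itemize}
\item $X(\uii)$ is smooth, affine and irreducible, so that $\C[X(\uii)]$ is normal with trivial enough class group;
\item each cluster variable and its one-step mutation are regular and coprime;
\item the union of the initial torus and all adjacent tori covers $X(\uii)$ up to codimension two.
\end{itemize}
By the Starfish lemma these give $\C[X(\uii)]=\mathcal{U}(\Sigma(\fW))$.

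Checking that the frozen $A_k$ are irreducible units-away-from-boundary, and that each mutable $A_k$ and its mutation $A_k'$ are coprime, requires understanding the divisors $\{A_k=0\}$. We expect this step to be hardest. We would attack it via the inductive fibration, showing that each $\{A_k=0\}$ is irreducible and that mutation swaps generic points.

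Finally, to upgrade from the upper cluster algebra to the cluster algebra $\cA(\Sigma(\fW))$, we would exhibit a maximal green sequence, or show that the quiver is locally acyclic via the inductive description, and invoke Muller's theorem that $\cA=\mathcal{U}$ for locally acyclic seeds.
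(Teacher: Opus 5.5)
The paper gives no proof of this statement. It is quoted from \cite[Theorem 1.1]{CGGLSS25} and used as a black box, so there is no in-paper argument to compare yours with. Read as a proof sketch, your outline has a reasonable architecture: regular weave functions, an open cluster torus, a Starfish argument, and local acyclicity for $\cA=\mathcal{U}$. However, the concrete geometric input in Step~2 is false, and you plan Step~3 on top of it.

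Since $\dim X(\uii)=\ell(\uii)-\ell(\delta(\uii))$, a length-additive letter does not change the dimension. The new coordinate is determined and $X(\uii')\cong X(\uii)$ for $\uii'=\uii*(i)$, with no extra $\C$ factor. In a non-additive step there is no map $X(\uii')\to X(\uii)$ forgetting the last coordinate. For $(z,z_{r+1})\in X(\uii')$, the flag $B_{\uii}(z)\sfB$ is in relative position $s_i$ from $\delta(\uii)\sfB$, so $z\notin X(\uii)$.

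Already for $\SL_2$ one has $X(i)=\{0\}$, $X(ii)=\{z_1z_2=1\}$ and $X(iii)\cong\{(z_1,z_2)\mid z_1z_2\neq1\}$. The projection of $X(iii)$ is exactly the complement of $X(ii)$ in $\C^2$. Here it is $X(ii)\times\C^\times$ that sits openly inside $X(iii)$, as $\{z_1\neq0\}$, with complement $\cong X(i)\times\C$. That is a deletion--contraction picture, not a fibration over $X(\uii)$. So your route to irreducibility of $\{A_k=0\}$ and to coprimality ``via the inductive fibration'' has nothing to stand on.

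The other load-bearing steps are asserted rather than supplied:
\begin{itemize}
\item The Starfish hypotheses need, for \emph{every} mutable vertex $k$, that $A_k'$ is regular on $X(\uii)$ and coprime to $A_k$. Regularity of $A_k'$ is automatic only when $\mu_k$ is realized by a change of weave: a local re-association of an $iii$ configuration, or a Case~5 double-string move as in Theorem~\ref{thm:cases}. You never explain why this can be done at each vertex.
\item ``Birational to a torus'' does not show that the $A_k$ themselves are coordinates on $\{\prod_k A_k\neq0\}$. That is what an open cluster torus requires.
\item In Step~1, the connectivity of all Demazure weaves under equivalences and mutation moves is simply granted.
\item The Starfish lemma alone only gives $\mathcal{U}(\Sigma(\fW))\subseteq\C[X(\uii)]$. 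The reverse inclusion needs the neighbouring tori to be open charts as well. It also needs a coprimality (e.g.\ full-rank) input to pass from the starfish to all seeds.
\item A maximal green sequence yields a Donaldson--Thomas transformation, not $\cA=\mathcal{U}$. Of your two options, only local acyclicity with Muller's theorem gives that equality.
\end{itemize}
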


The double inductive weaves arising from double strings have enjoyable properties.  For any two double strings $\bfs_1$ and $\bfs_2$ with $\uii(\bfs_1)=\uii(\bfs_2)$, they are connected by the following two types of local moves:
\begin{equation} \label{Eq: moves for strings} 
\begin{aligned} 
(j_1 \bfL, j_2 \bfX_2, \dots) &\longleftrightarrow (j_1 \bfR, j_2 \bfX_2, \dots), \\
(\dots, i \bfL, j \bfR, \dots) &\longleftrightarrow (\dots, j \bfR, i \bfL, \dots).
\end{aligned}
\end{equation}
From the proof of \cite[Theorem 6.8]{CGGLSS25}, we have the following theorem.

\begin{theorem}[{The proof of \cite[Theorem 6.8]{CGGLSS25}}] \label{thm:cases}
Let $\bfs_1$ and $\bfs_2$ be two double strings such that $\uii(\bfs_1)=\uii(\bfs_2)$, differing by one of the following moves: 
\begin{description}
  \item[Case~0] $(j_1 \bfL^+, \dots) \longleftrightarrow (j_1 \bfR^+, \dots)$,
  \item[Case~1] $(\dots, i \bfL^+, j \bfR^+, \dots) \longleftrightarrow (\dots, j \bfR^+, i \bfL^+, \dots)$,
  \item[Case~2] $(\dots, i \bfL^+, j \bfR, \dots) \longleftrightarrow (\dots, j \bfR^+,   i \bfL, \dots)$,
  \item[Case~3] $(\dots, i \bfL^+, j \bfR, \dots) \longleftrightarrow (\dots, j \bfR, i \bfL^+, \dots)$,
  \item[Case~4] $(\dots, i \bfL, j \bfR, \dots) \longleftrightarrow (\dots, j \bfR, i \bfL, \dots)$, with $\ell(s_{i} u s_{j})=\ell(u)-2$,
  \item[Case~5] $(\dots, i \bfL, j \bfR, \dots) \longleftrightarrow (\dots, j \bfR, i \bfL, \dots)$, with $\ell(s_{i} u s_{j})=\ell(u)$,
\end{description}
where $u$ is the Demazure product of the prefix sequence of $\uii(\bfs_1)$ preceding the move.
For each $p \in \{1, 2\}$, we set $\fW_p := \fW(\bfs_p)$ and
$$
\Sigma_p := \Sigma (\fW_p) = (\calA_p := \{ A_t(\fW_p) \}_{t\in[1,m]}, Q_p := Q(\fW_p)\},
$$
where $m=|(\fW_p)_3|$. 
Then the relationship between $\Sigma_1$ and $\Sigma_2$ is as follows:

\begin{enumerate}
\item \textbf{Cases~0--3 (Remaining unchanged):} 
The labeled quivers are identical and the cluster variables coincide:
\[
Q_2 = Q_1, \qquad \calA_2=\calA_1.  
\]
These moves involve length-additive steps that do not introduce new mutable trivalent vertices.

\item \textbf{Case~4 (Relabeling):} 
Let $a$ and $a+1$ be the indices of the trivalent vertices corresponding to the steps $i \bfL$ and $j \bfR$ (or $j \bfR$ and $i \bfL$). 
The quiver $Q_2$ is obtained from $Q_1$ by swapping the labels of vertices $a$ and $a+1$ and
the cluster variables are permuted accordingly:
\[
Q_2 = \sigma_{a,a+1}(Q_1), \qquad \calA_2 = \sigma_{a, a+1}(\calA_1).
\]

\item \textbf{Case~5 (Mutation):} 
Let $a$ be the index of the trivalent vertex corresponding to the step $i \bfL$.
The quiver $Q_2$ is obtained from $Q_1$ by mutation at vertex $a$ and
the set of cluster variables $\calA_2$ is obtained from $\calA_1$ by cluster mutation at $a$. That is, $A_t(\fW_2) = A_t(\fW_1)$ for $t \ne a$, and the variable $A_a(\fW_2)$ is determined by the exchange relation:
\[
Q_2 = \mu_{a}(Q_1), \quad \calA_2 = \mu_a(\calA_1).
\]
\end{enumerate}
\end{theorem}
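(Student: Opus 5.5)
This statement is quoted from the proof of \cite[Theorem~6.8]{CGGLSS25}, so the plan is to reconstruct that argument locally, one move at a time.

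\textbf{Reduction to a local comparison.} The double strings $\bfs_1$ and $\bfs_2$ agree outside the two (or one) positions involved in the move. The steps before the move therefore produce identical partial weaves with the same bottom slice $\ujj$, an expression of $u$. The steps after the move start from the same Demazure product and add the same letters on the same sides, so those parts of the inductive construction coincide up to weave equivalence. Hence $\fW_1$ and $\fW_2$ differ only in a local region: the part that inserts the letters $i$ on the left and $j$ on the right of $\ujj$ and reduces to a Demazure expression. Since equivalent weaves give the same quivers and cluster variables (\cite[Section~4.2, Proposition~6.6]{CGGLSS25}), it suffices to compare these two local pieces.

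\textbf{Cases~0--3.} In Case~0 the first strand is added to an empty weave, so left and right insertion give the same weave. In Case~1 neither step creates a trivalent vertex. The two strands are added on opposite sides and commute by planar isotopy, so the weaves are equivalent. In Cases~2 and~3 exactly one step is non-additive, so there is a single trivalent vertex in the local region. The two constructions differ by the local moves (a)--(c) listed in the definition of weave equivalence: sliding a length-additive strand past the merging region through $6$- and $4$-valent vertices. Each of those moves involves at most one trivalent vertex, so the weaves are equivalent with the same indexing. This gives $Q_2=Q_1$ and $\calA_2=\calA_1$.

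\textbf{Case~4.} Here $s_iu$ and $us_j$ both have length $\ell(u)-1$, and $\ell(s_ius_j)=\ell(u)-2$. The left merge with $i$ and the right merge with $j$ then take place in disjoint regions of the bottom slice: choose a reduced word for $u$ beginning with $i$ and ending with $j$. These two regions commute by isotopy, so the weaves are equivalent except that the top-to-bottom order of the two trivalent vertices is swapped. That swap is exactly the relabeling $\sigma_{a,a+1}$.

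\textbf{Case~5 (main obstacle).} Now $\ell(s_ius_j)=\ell(u)$, which forces $s_ius_j=u$, so the two merges interact. One must show that the two local weaves differ by the move $iji\to jij$ in which a trivalent vertex passes through a $6$-valent vertex, or its $A_1\times A_1$ analogue. This is the mutation move of \cite[Section~4]{CGGLSS25}. For that move one checks two things:
\begin{enumerate}[(i)]
\item the Lusztig cycles transform so that the quiver changes by $\mu_a$;
\item the regular functions $A_t$ obey the exchange relation, computed from $B_i(z)B_j(w)B_i(v)$ versus $B_j(\cdot)B_i(\cdot)B_j(\cdot)$ via the relation $\dot s_i x_i(z)=\ldots$ in $\mathrm{SL}_2$.
\end{enumerate}
The key difficulty is to reduce, by braid moves on $\ujj$ (which affect neither quivers nor variables), to a configuration where the left $i$-merge and right $j$-merge meet in a single hexagon. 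I would do this by writing $u=s_iu's_j$ reduced with $s_iu'=u's_j$. The remaining steps only compose the local identification with unchanged outer parts, so the unaffected variables satisfy $A_t(\fW_2)=A_t(\fW_1)$ for $t\neq a$.
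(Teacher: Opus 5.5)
Your sketch of Cases 0--4 is reasonable, but Case 5 rests on the wrong local move, and as written it would prove the wrong conclusion. The move you single out is a trivalent vertex passing through a $6$-valent vertex, or through a $4$-valent crossing when $i,j$ commute. That is precisely one of the equivalence moves (b), (c) listed in Section~\ref{Sec: DW}. It involves a single trivalent vertex and, by Proposition~\ref{Prop: we seed}, does not change the seed at all, so reducing Case 5 to it would give $\Sigma_2=\Sigma_1$ rather than $\mu_a(\Sigma_1)$.

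For the same reason, comparing $B_i(z)B_j(w)B_i(v)$ with $B_j(\cdot)B_i(\cdot)B_j(\cdot)$ only yields the polynomial substitution of Figure~\ref{fig:z-variable-change} (all $u$-labels equal to $1$). That computation can never produce an exchange relation. A quick sanity check: $u=s_i$, $j=i$ is an instance of Case 5, since $s_is_is_i=s_i$, and there the local picture is literally $iii\to i$ with no hexagon anywhere.

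Your proposed normal form is also inconsistent. If $u=s_iu's_j$ is reduced and $s_iu'=u's_j$, then $u=s_iu's_j=u'$, contradicting $\ell(u)=\ell(u')+2$. That decomposition is the Case 4 structure, not Case 5.

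What is missing is the correct local model. In Case 5, set $v:=s_iu=us_j$, so $\ell(v)=\ell(u)-1$. Then $\ttb(u)=\sigma_i\ttb(v)=\ttb(v)\sigma_j$, and therefore $\sigma_i\ttb(u)\sigma_j=\sigma_i^3\,\ttb(v)$ in $\Br^+$. Using braid moves and the one-trivalent-vertex equivalences (this is where your hexagon move legitimately enters), the two insertion orders become, up to equivalence, the two ways of merging three parallel $i$-strands: $(ii)i\to ii\to i$ versus $i(ii)\to ii\to i$. The strands of a reduced word for $v$ pass straight through.

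This two-trivalent-vertex move is the genuine mutation move of \cite{CGGLSS25}. Tracking Lusztig cycles through it gives $Q_2=\mu_a(Q_1)$ at the upper trivalent vertex (labelled $a$), while the lower vertex keeps its function. The exchange relation comes from the rank-one computation with $B_i(z_1)B_i(z_2)B_i(z_3)$ in $\SL_2$, where the trivalent vertices produce nontrivial $u$-labels. With that replacement, your outline is in line with the argument being cited.
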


\begin{remark} \label{Rmk: cases}
In the case where $u= w_\circ$ in Theorem \ref{thm:cases}, Case~2 and Case~5 occur if and only if
$j = i^*$, whereas Case~4 occurs if $j \neq i^*$.
\end{remark}

We now recall the \emph{double Bott-Samelson variety} following \cite{SW24, CGGLSS25}. 
For any expression sequence $\uii = (i_1,i_2, \dots, i_r) \in \Seq(\ttb)$,  the (half decorated) double Bott-Samelson variety $\Conf(\uii)$ is defined as the subvariety of $\C^r$:
\[
\Conf(\uii) := \{(z_1, \dots, z_{r}) \in \C^r \mid B_{\uii}(z) \in \sfB_{-} \cdot\sfB\},
\]
where $B_{\uii}(z)$ is given in \eqref{Eq: Biz}.
The variety $\Conf(\uii)$ is smooth and affine (\cite[Section 2.4]{SW24}). 
Similar to the case of braid varieties, different expression sequences $\uii$ for the same braid element $\ttb$ yield isomorphic varieties.

We write $(w,z)$ for the coordinates of the braid variety $X(\rxw \uii)$, where $w = (w_1, \ldots, w_{ \ell(\rxw)})$ (resp.\ $z = (z_1, \ldots, z_r)$) corresponds to the letters of $\rxw$ (resp.\ $\uii$), and write $z$ for the coordinates of the double Bott-Samelson variety $\Conf(\uii)$. By \cite[Lemma 3.16]{CGGLSS25}, there exists an isomorphism between the braid variety and the double Bott-Samelson variety
\begin{align} \label{Eq: X=Conf}
\zeta_\uii : X(\rxw \uii) \buildrel \sim \over \longrightarrow \Conf(\uii)
\end{align}
defined by sending the coordinates $(w,z)$ to the coordinates $z$.

Recall the right inductive weave $\rind{\rxw\uii}$ in Section \ref{Sec: diw}. 
Under the isomorphism $\zeta_\uii$ in \eqref{Eq: X=Conf}, the cluster variables  
$\calA (\rind{\rxw\uii})$ (see \eqref{Eq: cv}) can be described in terms of generalized principal minors.

\begin{proposition}[{\cite[Proposition 5.20 and Section 7.2]{CGGLSS25}}] \label{prop: conf variables}
Let $\uii=(i_1,\dots,i_r)\in\Seq(\ttb)$. 
We identify $X(\rxw \uii)$ and $\Conf(\uii)$ via the isomorphism $\zeta_\uii$ in \eqref{Eq: X=Conf}. 
 For each $1 \le t \le r$, the cluster variables of $X(\rxw \uii)$ are given by
\[
A_t(\rind{\rxw \uii}) = \PM_{i_t}(B_{i_{1}}(z_1)\cdots B_{i_{t}}(z_t)),
\]
where $\PM_{i}$ is the generalized principal minor associated to the fundamental weight $\varpi_{i}$. 
Furthermore, the coordinate ring of the double Bott-Samelson variety is obtained from localizing the polynomial ring 
\[
\C\bigl[\Conf(\uii)\bigr] \cong \C \bigl[z_1,\dots,z_{r}\bigr] \bigl[f_i(z)^{-1} \mid i \in I \bigr],
\]
where  $f_i (z) = \PM_{i}(B_{\uii}(z))$ for $i\in I$.
\end{proposition}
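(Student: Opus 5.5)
The plan is to unwind the inductive construction of the right inductive weave $\rind{\rxw\uii}$. Then, using the isomorphism $\zeta_\uii$ of \eqref{Eq: X=Conf}, I would translate the weave-theoretic definition of the cluster variables (\cite[Section 5.2]{CGGLSS25}) into a statement about Gauss ($\sfB_-\sfT\sfU$) decompositions of partial products $B_{i_1}(z_1)\cdots B_{i_t}(z_t)$.

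First I would record the shape of $\rind{\rxw\uii}$. Since $\rxw$ is reduced for $w_\circ$, the first $\ell(w_\circ)$ steps of the double string $(\rxw\bfR,\, i_1\bfR,\dots,i_r\bfR)$ are length-additive and create no trivalent vertices. Every subsequent step $i_t\bfR$ is non-additive, because $\delta$ already equals $w_\circ$. So trivalent vertices are in bijection with $t\in[1,r]$, ordered top to bottom, which justifies the indexing $A_t$. At step $t$, the new strand of color $i_t$ is merged into the current bottom word (an expression of $w_\circ$) by $6$- and $4$-valent moves, producing a trivalent vertex on the right.

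Next I would use the algebraic description of weave moves from \cite[Section 4--5]{CGGLSS25}. Braid moves act on the braid matrices by the identity $B_{iji}(a,b,c)=B_{jij}(a',b',c')$, and commuting moves act similarly. At a trivalent vertex one factors $B_i(x)B_i(y)=B_i(x')\,U\,h$ with $h$ a torus element $\alpha_i^\vee(\cdot)$, and the torus factor is pushed to the right. Inductively, the bottom of the weave after step $t$ carries
\[
B_{\rxw}(w)B_{i_1}(z_1)\cdots B_{i_t}(z_t) = B_{\rxw'}(u^{(t)})\, h_t\, U_t
\]
for some reduced word $\rxw'$ of $w_\circ$, $h_t\in\sfT$ and $U_t\in\sfU$. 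The cluster variable $A_t$ is read off as the $\varpi_{i_t}$-character of the torus contribution created at the $t$-th vertex, suitably normalized.

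The third step identifies this character with a principal minor. From the defining condition of $X(\rxw\uii)$ and $\zeta_\uii$, $B_{\rxw}(w)$ is determined by $z$, and $w_\circ^{-1}B_{\rxw}(w)\in\sfB_-$ up to a torus factor. Hence $B_{i_1}(z_1)\cdots B_{i_t}(z_t)$ is written as $L_t h'_t U'_t$ with $L_t\in\sfU_-$. Since $\PM_i$ is invariant under left multiplication by $\sfU_-$ and right multiplication by $\sfU$, this gives $\PM_{i_t}(B_{i_1}(z_1)\cdots B_{i_t}(z_t))=\varpi_{i_t}(h'_t)$. The same induction should show that $\varpi_{i_t}(h'_t)$ equals the weave variable $A_t$. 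The final claim, about the coordinate ring, follows because $\Conf(\uii)$ is defined by $B_\uii(z)\in\sfB_-\sfB$, and this condition is equivalent to $\PM_i(B_\uii(z))\neq 0$ for all $i\in I$. That makes $\Conf(\uii)$ the principal open subset of $\C^r$ cut out by the $f_i$.

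The main obstacle is the third step. The difficulty is matching the normalization and signs of the torus factors produced at trivalent vertices, since they are pushed rightward through later braid matrices and so involve the twisted action of $\dot s_i$ on $\sfT$, with the $\varpi_{i_t}$-component of the cumulative torus part. I would first check rank one and $A_2$ directly. For the general case I would argue that the braid and commuting moves applied after step $t$ affect $h'_t$ only through conjugation by $\dot s_j$ with $j\neq i_t$, or in ways that leave $\varpi_{i_t}$ unchanged.
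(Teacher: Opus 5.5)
Your skeleton is reasonable. The count of trivalent vertices of $\rind{\rxw\uii}$, sliding Borel factors to the right, the left $\mathsf{U}_-$/right $\mathsf{U}$-invariance of $\PM_i$, and the localization claim are all fine. For the last, $\sfB_-\sfB=\{g\mid \PM_i(g)\neq0\ \forall i\in I\}$, so $\Conf(\uii)$ is the principal open set $\{\prod_i f_i\neq 0\}$ of $\C^r$.

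However, the Gauss decomposition of the partial products does not follow ``from the defining condition''. That condition constrains only $B_\uii(z)$, and partial products can leave the big cell: for $\uii=(i,i)$ the point $z_1=0$ lies in $\Conf(\uii)$, yet $B_i(0)=\dot s_i\notin\sfB_-\sfB$. What your sliding identity actually gives, on the open chart where the right inputs of all trivalent vertices are nonzero, is the following. Write $B_\rxw(w)=V_0\dot w_\circ$ and $B_{\rxw'}(u^{(t)})=V_t\dot w_\circ$ with $V_0,V_t\in\mathsf{U}$. Then $B_{i_1}(z_1)\cdots B_{i_t}(z_t)=(\dot w_\circ^{-1}V_0^{-1}V_t\dot w_\circ)\,h_tU_t$, and the first factor lies in $\mathsf{U}_-$. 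So $h'_t=h_t$ is the \emph{cumulative} torus factor, and $\PM_{i_t}(B_{i_1}(z_1)\cdots B_{i_t}(z_t))=\varpi_{i_t}(h_t)$ on that chart, which then extends by density. In particular, the relevant object is not ``the torus contribution created at the $t$-th vertex''. That contribution is $\alpha^\vee_{i_t}(z'_t)$, where $z'_t$ is the parameter of the new strand after $h_{t-1}U_{t-1}$ has been slid past $B_{i_t}(z_t)$. In general $z'_t$ is a rational function different from $A_t$.

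The genuine gap is identifying $\varpi_{i_t}(h_t)$ with $A_t$ as actually defined in \cite{CGGLSS25}, via labelled weaves whose edge data involve monomials in earlier cluster variables weighted by Lusztig cycles. Your ``the same induction should show'' is where the whole proof would have to happen, and the mechanism you suggest does not address it. Braid and commuting moves are identities among products of the $B_j$ and never touch torus factors. Moreover, nothing after step $t$ can affect $h'_t$, which depends only on $z_1,\dots,z_t$. What your recursion $h_t=\alpha^\vee_{i_t}(z'_t)\,\dot s_{i_t}^{-1}h_{t-1}\dot s_{i_t}$ does yield is
\[
\varpi_{i_t}(h_t)\,\varpi_{i_t}(h_{t-1})=z'_t\prod_{j\in I,\ (\alpha_j,\alpha_{i_t})=-1}\varpi_j(h_{t-1}).
\]
You would still have to show that the weave variables satisfy the corresponding recursion with the same $z'_t$, with $\varpi_j(h_{t-1})$ replaced by the variable at the last vertex of color $j$ before $t$. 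That amounts to computing the Lusztig-cycle weights on the right incoming edge of every trivalent vertex of $\rind{\rxw\uii}$. The paper does not reprove this; it imports the minor formula from \cite[Proposition~5.20 and Section~7.2]{CGGLSS25}. Given such a formula, passing through $\zeta_\uii$ is exactly your invariance step, using $\dot w_\circ^{-1}B_\rxw(w)\in\mathsf{U}_-$.
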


Thanks to Proposition \ref{prop: conf variables}, for any $\uii = (i_1, i_2, \ldots, i_r) \in \Seq(\ttb)$, we define 
\begin{align*} 
\calB (\uii) := \C \bigl[z_{ 1},z_{ 2}, \dots,z_{ r}\bigr] \bigl[f_i (z)^{-1} \mid i \in I \bigr].
\end{align*}
Throughout the remainder of the paper, we identify the three algebras $\calB (\uii)$, $\C\bigl[\Conf(\uii)\bigr]$ and $\C\bigl[X(\rxw \uii)\bigr] $ via the isomorphisms 
\begin{align} \label{Eq: 3 iso}
\calB (\uii) \cong \C\bigl[\Conf(\uii)\bigr] \cong  \C\bigl[X(\rxw \uii)\bigr], 
\end{align}
where the first isomorphism is given in Proposition \ref{prop: conf variables} and the second is given by \eqref{Eq: X=Conf}.

\begin{example}\label{ex:initial_variables}
We continue with Example \ref{ex:initial_variables 1}. The standard realization $\sfG=\SL_4(\C)$ and the standard pinning are given as follows: for $i \in\{1,2,3\}$,
\[
\phi_i\!\left(\begin{pmatrix} a&b\\ c&d \end{pmatrix}\right)
=
\begin{pmatrix}
I_{i-1} & 0 & 0\\
0 & \begin{pmatrix} a&b\\ c&d \end{pmatrix} & 0\\
0 & 0 & I_{3-i}
\end{pmatrix}.
\]
Then we have
\[
B_i(z)=\phi_i\!\left(\begin{pmatrix} z&-1\\ 1&0 \end{pmatrix}\right),
\]
and $\triangle_{i}(x)$ is the leading principal $i\times i$ minor of $x\in\SL_4(\C)$.
By a direct computation, Proposition~\ref{prop: conf variables} yields the cluster variables $A_t := A_t(\rind{\rxw \uii})$ as follows: the three frozen variables are  
\[
{A}_2=z_2,\qquad 
{A}_5 = z_1z_4z_5 - z_2z_3z_5 - z_1 ,\qquad
{A}_6 = z_3z_6 - z_4z_5 + 1,
\]
and the three mutable variables are
\[
{A}_1=z_1,\qquad 
{A}_3=z_3, \qquad
{A}_4 = z_1z_4 - z_2z_3.
\]
\end{example}

\vskip 2em 

\section{Comparison of two cluster algebra structures}\label{sec: comparison of two cluster algebra structures}

In this section, we establish an explicit isomorphism between the localized bosonic extension $\widetilde{\obbA}(\ttb)$ and the coordinate ring of the braid variety $X(\rxw \uii)$ for $\uii \in \Seq(\ttb)$, and prove that this isomorphism respects the cluster structures. The comparison of these cluster structures relies on the construction of a Demazure weave associated with an admissible chain of $i$-boxes, which is formulated using double strings.

Throughout this section, let $\cmC = (c_{i,j})_{i,j\in I}$ be a Cartan matrix of \emph{finite $ADE$ type}, 
and let $\g$ be the corresponding simple Lie algebra. We denote by $\Br$ the generalized Braid group associated with $\cmC$.

\subsection{Double inductive weaves arising from $i$-boxes} \label{Sec: diw from iBox}
Let $\ttb \in \Br^+$ and take an expression 
$$
\uii :=( i_1, i_2, \ldots,i _r)\in\Seq(\ttb).
$$ 
Let $\frakC = (\frakc_t)_{t \in [1,r]} $ be the admissible chain of $i$-boxes associated with $\uii$ and denote its LR sequence by  $\frakH=(\calH_1,\ldots,\calH_{r-1})$. For simplicity, we set $\calH_0 := \calR$.
For an expression sequence $\rxw = (j_1, j_2, \ldots, j_\ell) \in \Seq(\Delta)$, 
we define 
\begin{align} \label{Eq: s(C)}
\bfs_{\rxw}(\frakC) := (j_1 \bfR^+, j_2 \bfR^+, \ldots, j_\ell \bfR^+, h_1 \bfX_1, h_2 \bfX_2, \ldots, h_r \bfX_r),
\end{align}
where
\begin{align*}
h_t := 
\begin{cases}
\bcol(\frakc_t)^* & \text{if } \calH_{t-1} = \calL, \\
\bcol(\frakc_t) & \text{if } \calH_{t-1} = \calR, 
\end{cases}
\qtq 
\bfX_t :=
\begin{cases}
\bfL & \text{if } \calH_{t-1} = \calL,\\
\bfR & \text{if } \calH_{t-1} = \calR, 
\end{cases}
\end{align*}
(see \eqref{Eq:istar} for $i^*$) and set
\begin{align*}
\uii^\rmL(\frakC)
&:= (h_t)_{\bfX_t=\bfL}
\quad\text{(taken in decreasing order of $t$)},\\
\uii^\rmR(\frakC)
&:= (h_t)_{\bfX_t=\bfR}
\quad\text{(taken in increasing order of $t$)}.
\end{align*} 
Then the $\uii$-sequence of the double string $\bfs_{\rxw}(\frakC)$ is given in terms of $\uii^\rmL(\frakC)$ and $\uii^\rmR(\frakC)$ as follows:
\[
\uii_\rxw (\frakC):= \uii(\bfs_{\rxw}(\frakC))
= \uii^\rmL(\frakC)\,\rxw \,\uii^\rmR(\frakC).
\]
We define 
\begin{align} \label{Eq: fWT}
\fW_{\rxw}^{\rm T}(\frakC) : \rxw\uii \rightarrow \uii_\rxw (\frakC)
\end{align}
to be the Demazure weave consisting of the braid moves, i.e., 4-valent and 6-valent vertices in Figure \ref{fig:vertex_type}, and set $\fW_{\rxw}^{B}(\frakC)$ to be the double inductive weave associated with $\bfs_{\rxw}(\frakC)$ (see Section \ref{Sec: diw}), i.e., 
\begin{align} \label{Eq: fWB}
\fW_{\rxw}^{B}(\frakC) := \fW( \bfs_{\rxw}(\frakC)) :\uii_\rxw (\frakC) \rightarrow \delta ( \uii_\rxw (\frakC)).
\end{align}
Note that $\delta ( \uii_\rxw (\frakC)) = w_\circ$ in the Weyl group $\weyl$. 
We now define  
\begin{align*}
\fW_{\rxw}(\frakC) :=  \fW_{\rxw}^{B}(\frakC) \circ \fW_{\rxw}^{T}(\frakC) : \rxw\uii \rightarrow \delta ( \uii_\rxw (\frakC)),
\end{align*}
where $\fW_2\circ\fW_1$ denotes the vertical concatenation obtained by placing
$\fW_2$ below $\fW_1$. 
Although the weave $\fW_{\rxw}(\frakC)$ itself depends on the choice of $\fW_{\rxw}^{\rm T}(\frakC)$,   all such $\fW_{\rxw}(\frakC)$ belong to the same equivalence class (see Section \ref{Sec: DW}).  Thanks to Proposition \ref{Prop: we seed} below, the seeds arising from $\fW_{\rxw}(\frakC)$ do not depend on the choice of the weave $\fW_{\rxw}^{\rm T}(\frakC)$. 

\begin{proposition} [{\cite[Lemma 4.36 and Lemma 5.15]{CGGLSS25}}] \label{Prop: we seed}
Let $\fW_1,\fW_2 : \bfj \to \delta(\bfj)$ be two equivalent weaves. Then the seeds $\Sigma(\fW_1)$ and $\Sigma(\fW_2)$ coincide.  
\end{proposition}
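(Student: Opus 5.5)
The plan is to reduce to a single elementary move and then check that each such move is local. By definition, equivalence of weaves is generated by the local moves (a), (b), (c) listed in Section~\ref{Sec: DW}, so by induction on the number of moves it suffices to treat $\fW_1$ and $\fW_2$ that differ by exactly one of them inside a small disk $D$. Each such move involves at most one trivalent vertex. This gives a canonical bijection $(\fW_1)_3 \cong (\fW_2)_3$: it is the identity outside $D$ and matches the (at most one) trivalent vertex inside $D$. We transport the indexing along this bijection. It then remains to prove two things: the cluster variables $A_t(\fW_1)=A_t(\fW_2)$ agree, and the quivers $Q(\fW_1)=Q(\fW_2)$ agree, including the frozen set.

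\emph{Cluster variables.} The functions $A_t(\fW)$ are produced by propagating edge coordinates down the weave, starting from $z_1,\dots,z_r$ on the top strands. Each vertex type acts by a fixed local substitution, coming from an identity in $\sfG$:
\begin{itemize}
\item the $6$-valent vertex comes from $B_i(a)B_j(b)B_i(c)=B_j(a')B_i(b')B_j(c')$;
\item the $4$-valent vertex comes from the commutation $B_i(a)B_k(b)=B_k(b)B_i(a)$;
\item the trivalent vertex comes from $B_i(a)B_i(b)=B_i(\cdot)\,(\text{element of }\sfB)$, with the torus and unipotent part pushed to the right.
\end{itemize}
Hence it suffices to verify that the two sides of each local move define the same composite map from the coordinates on the top boundary of $D$ to those on the bottom boundary. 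For (a) this is the statement that the braid-move substitutions satisfy the braid relations, i.e.\ the equality of the corresponding products in $\sfG$. For (b) and (c) it is a direct rank-two computation in $\SL_3$, respectively $\SL_2\times\SL_2$. The same computation shows that the regular function recorded at the unique trivalent vertex, a ratio of the edge variables there, is the same on both sides. Everything outside $D$ is then unchanged.

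\emph{Quivers.} Arrows and frozen vertices are defined from Lusztig cycles, which are edge weightings obtained by tropical propagation from each trivalent vertex, together with local intersection numbers at trivalent vertices. I would argue in three steps.
\begin{enumerate}
\item The tropical propagation rules satisfy the tropical analogues of the same local identities. So for each trivalent vertex, its Lusztig cycle has the same weights on $\partial D$ in $\fW_1$ and $\fW_2$, and therefore coincides with it outside $D$.
\item Intersection pairings at trivalent vertices outside $D$ are then unchanged.
\item Only the contribution at the single trivalent vertex inside $D$ needs a check. This is a finite case analysis over the incoming boundary weights for moves (b) and (c).
\end{enumerate}
Frozenness, meaning that a cycle reaches the bottom boundary, is likewise preserved, since the cycles agree outside $D$.

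I expect the main obstacle to be move (b), where a $6$-valent vertex sits next to a trivalent vertex. There the tropical rule at the hexavalent vertex is piecewise linear, of the form $\min/\max$ of the incoming weights, and one has to check that the local intersection count at the trivalent vertex is independent of how the $6$-valent vertex is slid past it. My first attempt would be to enumerate the finitely many sign patterns of incoming weights in the $A_2$ rank-two situation and compare both sides directly. This is essentially the content of \cite[Lemma 4.36]{CGGLSS25}; the function part corresponds to \cite[Lemma 5.15]{CGGLSS25}.
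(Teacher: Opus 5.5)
Your reduction to a single elementary move inside a disk $D$ is the right framework, and so is transporting the indexing along the obvious bijection of trivalent vertices. The paper itself gives no argument and simply quotes \cite[Lemma 4.36 and Lemma 5.15]{CGGLSS25}.

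\textbf{The gap is in the quiver part.} You assume $Q(\fW)$ is computed from local intersection numbers at trivalent vertices only. In the construction of $Q(\fW)$ in \cite{CGGLSS25} (cf.\ \cite[Definition~4.26]{CGGLSS25}), the pairing $\sharp(\gamma_v,\gamma_w)$ of two Lusztig cycles also has local contributions at $6$-valent vertices. The phrase ``at $\fW_3$'' in Section~\ref{Sec: DW} is a simplification, and these $6$-valent terms cannot be dropped. Throughout, use the propagation rules $(a,b,c)\mapsto(b+c-\min(a,c),\min(a,c),a+b-\min(a,c))$ at $6$-valent vertices and $(a,b)\mapsto\min(a,b)$ at trivalent ones.

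\emph{Why the $6$-valent terms are needed.} Take type $A_2$ with $\rxw=(1,2,1)$ and $\uii=(1,2,1)$. Proposition~\ref{prop: right inductive weave quiver} gives an arrow between the first two trivalent vertices of $\rind{\rxw\uii}$, and the first of these is mutable. Yet the two corresponding cycles are never simultaneously nonzero around any trivalent vertex. They meet only by crossing at the $6$-valent vertex $(2,1,2)\to(1,2,1)$ of the last step.

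\emph{Why your step~3 fails.} Apply move~(b) to $ijij$ with incoming weights $\gamma=(0,1,0,0)$ and $\gamma'=(0,0,0,1)$.
\begin{itemize}
\item Along $ijij\to jijj\to jij$, the $6$-valent vertex gives $\gamma=(1,0,1,0)$. So $\gamma$ enters the trivalent vertex from the left and $\gamma'$ from the right, which is a nonzero trivalent contribution.
\item Along $ijij\to iiji\to iji\to jij$, the first $6$-valent vertex gives $\gamma=(0,0,0,1)$ and $\gamma'=(0,1,0,0)$. So $\gamma$ vanishes around the trivalent vertex, and the trivalent contribution is $0$.
\end{itemize}
On both sides the outgoing weights are $\gamma=(1,0,0)$ and $\gamma'=0$. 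The discrepancy is accounted for by the $6$-valent vertex where the two cycles cross. So the ``finite case analysis at the trivalent vertex'' you anticipate for move~(b) would produce a counterexample, not a proof.

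\textbf{The repair.} Step~3 must compare, for every pair of cycles meeting $D$, the sum of all trivalent and $6$-valent local contributions inside $D$. This includes cycles entering from above as well as the one born in $D$. In particular, move~(a) is not automatic even though it has no trivalent vertex, because it rearranges the $6$-valent vertices at which cycles cross.

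\textbf{Two smaller points on the cluster-variable part.}
\begin{itemize}
\item In the labeling of \cite[Definition~5.8]{CGGLSS25} used in this paper, edges carry pairs $(\tilde z_e,u_e)$, and the $u_e$ are governed by Lusztig cycles. So $A_t(\fW)$ is not determined by group identities alone. You need the agreement of cycles on $\partial D$ from your step~1 before concluding $A_t(\fW_1)=A_t(\fW_2)$.
\item If the $\sfB$-factor of a trivalent vertex is really pushed to the right, that operation is not local. Comparing only top-to-bottom maps on $D$ therefore ignores what enters and leaves $D$ sideways. One way to restore locality is the uniqueness of a factorization $g=B_{\bfj}(z)\,b$ with $b\in\sfB$ for reduced $\bfj$. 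This shows that both sides of a move produce the same boundary labels and emit the same element of $\sfB$.
\end{itemize}
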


We call $\bfs_{\rxw}(\frakC)$ (resp.\ $\fW_{\rxw}(\frakC)$) the double string (resp.\ the Demazure weave) associated with $\frakC$.
 We sometimes write $\bfs (\frakC)$ and $\fW (\frakC)$ instead of $\bfs_{\rxw}(\frakC)$ and $\fW_{\rxw}(\frakC)$ respectively when there is no need to emphasize the expression $\rxw$.

\begin{example}\label{ex:double_weave}
We continue with Example~\ref{ex: ibox exchange matrix} \eqref{ex: ibox exchange matrix (ii)}. Let 
$$
\uii=(2,3,1,2,2,1)
$$
and let $\frakC = (\frakc_t)_{t\in [1,6]}$ be the admissible chain associated with $\uii$ determined by the LR sequence 
$\calH = (\calL,\calR,\calL,\calR,\calR)$. Then we have 
\begin{align*}
(\bfX_t)_{1\leq t \leq 6}&=(\bfR,\bfL,\bfR,\bfL,\bfR, \bfR),\\
(h_t)_{1\leq t \leq 6}&=(1,~3^*,~2,~2^*,~2,~1) = (1,~1,~2,~2,~2,~1),
\end{align*}
which implies that
\[
\uii^\rmL(\frakC)=(2^*,3^*) = (2,1) \qtq
\uii^\rmR(\frakC)=(1,2,2,1).
\]
If we take $\rxw=(1,2,1,3,2,1)\in\Seq(\Delta)$, the double string is given as
\begin{align*}
\bfs_\rxw(\frakC) &= (1 \bfR^+,2 \bfR^+,1 \bfR^+,3 \bfR^+,2 \bfR^+,1 \bfR^+,1 \bfR,1 \bfL,2 \bfR,2 \bfL,2 \bfR,1 \bfR),
\end{align*}
and the $\uii$-sequence of the double string is
\[
\uii_\rxw(\frakC) = (\underbrace{2,1}_{\uii^\rmL(\frakC)},\underbrace{1,2,1,3,2,1}_{\rxw},
\underbrace{1,2,2,1}_{\uii^\rmR(\frakC)}).
\]
Figure \ref{fig:weave_quiver} illustrates the weave $\fW_{\rxw}^{B}(\frakC)$ with its Lusztig cycles and its associated quiver $Q(\fW_{\rxw}^{B}(\frakC))$. The quiver $Q(\fW_{\rxw}^{B}(\frakC))$ coincides with the opposite quiver $Q(\frakC)^{\rm{op}}$ of $Q(\frakC)$ in Example~\ref{ex: ibox exchange matrix} \eqref{ex: ibox exchange matrix (ii)}.
\end{example}

\begin{figure}[htbp]
  \begin{tikzpicture}[scale=0.6]
	\draw [ultra thick] 
         (1/2,15) -- (1/2,8) -- (2/2,7.5) -- (2/2,7) -- (1/2,6) -- (1/2,5.5) -- (0,5) -- (0,-0.5)
         (2/2,15) -- (2/2,14) -- (1/2,13)
         (4/2,15) -- (4/2,14) -- (3/2,13) -- (5/2,11.5) -- (5/2,9.5) -- (4/2,9) -- (3/2,8) --(2/2,7.5)
         (7/2,15) -- (7/2,14) -- (6/2,13) -- (6/2,12) -- (5/2,11.5)
         (8/2,15) -- (7/2,14)
         (5/2,9.5) -- (6/2,9) --(6/2,7) -- (5/2,6) -- (5/2,6) -- (5/2,4) -- (4/2,3.5) -- (4/2,1.5) -- (5/2,1) -- (5/2,-0.5)
         (1/2,5.5) -- (2/2,5) -- (3/2,4) -- (4/2,3.5)
         (4/2,1.5) -- (3/2,1) -- (3/2,-0.5)
         (11/2,15) -- (9/2,13) -- (9/2,11) -- (8/2,10) -- (8/2,7) -- (7/2,6) -- (7/2,3) -- (6/2,2) -- (6/2,1) -- (5/2,0);
         \draw [ultra thick, color=purple] 
         (0,15) -- (0,6) -- (1/2,5.5) -- (1/2,-0.5)
         (3/2,15) -- (3/2, 14) -- (2/2,13) -- (2/2,7.5) -- (1/2,7) -- (0,6)
         (2/2,7.5) -- (3/2,7) -- (2/2,6) -- (1/2,5.5)
         (6/2,15) -- (6/2,14) -- (5/2,13) -- (5/2,11.5) -- (4/2,11) -- (4/2,10) -- (5/2,9.5) -- (5/2,7) -- (4/2,6) 
         -- (4/2,3.5) -- (3/2,3) -- (3/2,2) -- (4/2,1.5) -- (4/2,-0.5)
         (5/2,11.5) -- (6/2,11) -- (6/2,10) -- (5/2,9.5)
         (4/2,3.5) -- (5/2,3) -- (5/2,2) -- (4/2,1.5)
         (9/2,15) -- (7/2,13) -- (7/2,11) -- (6/2,10.5)
         (10/2,15) -- (8/2,13) -- (8/2,11) -- (7/2,10) -- (7/2,7) -- (6/2,6) -- (6/2,3) -- (5/2,2.5);
         \draw [ultra thick, color=teal]
         (5/2,15) -- (5/2,14) -- (3/2,12) -- (3/2,9) -- (4/2,8) -- (4/2,7) -- (3/2,6) -- (3/2,5) -- (2/2,4) -- (2/2,-0.5); 
         \foreach \i/\j in {0/2, 1/1, 2/1, 3/2, 4/1, 5/3, 6/2, 7/1,8/1,9/2,10/2,11/1}{
         \node[above] at (\i/2,15) {$\j$};
         }
         \draw[gray, line width=6pt, opacity=0.3] 
         (7/2,14) -- (6/2,13) -- (6/2,12) -- (4/2,11) -- (4/2,10) -- (6/2,9) -- (6/2,7) -- (5/2,6) -- (5/2,4) -- (3/2,3) 
         -- (3/2,2) -- (5/2,1) -- (5/2,0);    
        \draw[pink, line width=6pt, opacity=0.3] 
         (1/2,13) -- (1/2,8) -- (3/2,7) -- (2/2,6) -- (0/2,5) -- (0,-0.5);
        \draw[teal, line width=6pt, opacity=0.3] 
         (6/2,10.5) -- (6/2,10) -- (4/2,9) -- (3/2,8) -- (1/2,7) -- (0/2,6);
        \draw[blue, line width=6pt, opacity=0.3] 
        (0/2,6) -- (2/2,5) -- (3/2,4) -- (5/2,3) -- (5/2,2.5);
       \draw[yellow, line width=6pt, opacity=0.3] 
        (5/2,2.5) --  (5/2,2) --  (3/2,1) -- (3/2,-0.5);
        \draw[purple, line width=6pt, opacity=0.3] 
        (5/2,0) -- (5/2,-0.5);
  \end{tikzpicture}
   \qquad \qquad
 \begin{tikzpicture}[scale=0.7]
    \coordinate (3) at (0,4);
    \coordinate (2) at (0,0);
    \coordinate (1) at (-2, 2);
    \coordinate (4) at (2,2);
    \coordinate (5) at (2,0);
    \coordinate (6) at (-2,0);
    	\node[circle, draw=black, fill=gray!50, minimum size=0.5cm] at (1) {1};
        \node[rectangle, draw=black, fill=pink!50, minimum size=0.5cm] at (2) {2}; 
        \node[circle, draw=black, fill=teal!50, minimum size=0.5cm] at (3) {3}; 
        \node[circle, draw=black, fill=blue!50, minimum size=0.5cm] at (4) {4}; 
        \node[rectangle, draw=black, fill=yellow!50, minimum size=0.5cm] at (5) {5};
        \node[rectangle, draw=black, fill=purple!50, minimum size=0.5cm] at (6) {6}; 
        \node at (0,-7) {};

    \node[circle, draw=black, minimum size=0.5cm] at (3) {3};
    \node[rectangle, draw=black, minimum size=0.5cm] at (2) {2};
    \node[circle, draw=black, minimum size=0.5cm] at (1) {1};
    \node[circle, draw=black, minimum size=0.5cm] at (4) {4};
    \node[rectangle, draw=black,  minimum size=0.5cm] at (5) {5};
    \node[rectangle, draw=black, minimum size=0.5cm] at (6) {6};

    \draw[->,  thick, shorten >=10pt, shorten <=10pt] (3) -- (1);            
    \draw[->,  thick, shorten >=10pt, shorten <=10pt] (5) -- (1);  
    \draw[->,  thick, shorten >=10pt, shorten <=10pt] (3) -- (2);
    \draw[->,  thick, shorten >=10pt, shorten <=10pt] (4) -- (3);
    \draw[->,  thick, shorten >=10pt, shorten <=10pt] (1) -- (4);
    \draw[->,  thick, shorten >=10pt, shorten <=10pt] (2) -- (4);    
    \draw[->,  thick, shorten >=10pt, shorten <=10pt] (4) -- (5);
    \draw[->,  thick, shorten >=10pt, shorten <=10pt] (1) -- (6);    
\end{tikzpicture}   
\caption{The double inductive weave $\fW_{\rxw}^{B}(\frakC)$ associated with the double string 
$\bfs_\rxw(\frakC) = (1 \bfR^+,2 \bfR^+,1 \bfR^+,3 \bfR^+,2 \bfR^+,1 \bfR^+,1 \bfR,1 \bfL,2 \bfR,2 \bfL,2 \bfR,1 \bfR)$ together with its labeled quiver $Q(\fW_{\rxw}^{B}(\frakC))$ in Example \ref{ex:double_weave}.
For each trivalent vertex, the corresponding Lusztig cycle assigns weight either $1$ or $0$ to each edge. 
The thick shaded lines indicate the edges with weight $1$.
Different Lusztig cycles are shown in different colors, which also mark the corresponding vertices in the quiver.}
\label{fig:weave_quiver}
\end{figure}

\subsection{Box moves and weaves}
In this subsection, we will investigate how box moves on an admissible chain $\frakC$ affect the weave $\fW_{\rxw}(\frakC)$.  

We first consider the weave $\fW_{\rxw}^B(\frakC)$ given in \eqref{Eq: fWB}. 
Recall the admissible chain $\frakC_{\calR}$ determined by the LR sequence $(\calR,\calR,\ldots,\calR)$ (see \eqref{Eq: C calR}).

\begin{lemma} \label{lem: RRR and riw}
Let
$\frakC_{\calR}$
be the admissible chain of $i$-boxes associated with $\uii  \in \Seq(\ttb)$ determined by $(\calR, \calR,\ldots,\calR)$, and 
let $\rind{\rxw\uii}$ be the right inductive weave for $\rxw \in \Seq(\Delta)$. Then we have
\[
\fW_{\rxw}^B(\frakC_\calR) = \rind{\rxw\uii}  \qtq
Q(\frakC_{\calR}) = Q(\fW_{\rxw}^B(\frakC_\calR))^{\mathrm{op}}
\]
as labeled quivers.
\end{lemma}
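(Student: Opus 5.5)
The first identity is a direct unwinding of definitions, and I would check it first. For $\frakC_\calR$ all horizontal moves are $\calR$; together with the convention $\calH_0=\calR$, this gives $\bfX_t=\bfR$ and $h_t=\bcol(\frakc_t)=i_t$ for all $t\in[1,r]$, since the colors of $\frakC_\calR$ reproduce $\uii$. Hence
\[
\bfs_{\rxw}(\frakC_\calR)=(j_1\bfR,\ldots,j_\ell\bfR,i_1\bfR,\ldots,i_r\bfR),
\]
which is the double string of the right inductive weave on $\rxw\uii$. It follows that $\fW^B_{\rxw}(\frakC_\calR)=\rind{\rxw\uii}$. The superscripts $+$ on the $j$'s are automatic because $\rxw$ is reduced. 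Also $\uii^\rmL(\frakC_\calR)=\emptyset$, so $\uii_\rxw(\frakC_\calR)=\rxw\uii$ and $\fW^{\rm T}_{\rxw}$ is trivial.

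Next I would fix the labeling. Since $\delta(\rxw)=w_\circ$, every step $i_t\bfR$ is non-additive, and none of the $j_s\bfR^+$ produce trivalent vertices. So the trivalent vertices are exactly $r$ in number, and the $t$-th one (top to bottom) comes from the letter $i_t$. This matches vertex $t$ of $Q(\frakC_\calR)$, i.e.\ the box $\frakc_t=\{1,t]$.

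For the quiver identity I would argue by induction on $r$, using Proposition~\ref{prop: right inductive weave quiver}. The case $r=0$ is empty.

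First I would establish the frozen structure. In $Q(\rind{\rxw\uii})$ the frozen vertex $f(j)$ is the last $t$ with $i_t=j$. On the $i$-box side, the frozen vertices of $\frakC_\calR$ are the $t$ with $\frakc_t=\{1,t]=[1(i)^+,r(i)^-]$, which means $t=r(i_t)^-$, i.e.\ $t$ is the last occurrence of its color. These agree.

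For the inductive step, pass from $\uii$ to $\uii'=\uii*(i)$ with new index $r+1$. Proposition~\ref{prop: right inductive weave quiver} says three things: $f(i)=r+1^-$ becomes mutable; the arrow $f(i)\to r+1$ is added; and for each neighbour $j$ of $i$ whose last occurrence $p$ satisfies $p>(r+1)^-$, the arrow $p\to (r+1)^-$ is added.

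I would compare these with the new entries of \eqref{eq: exchange matrix for RRR}, reversed. The new column is $t=(r+1)^-=s^-$ with $s=r+1$. This gives $b_{r+1,t}=1$ because $s=t^+$, i.e.\ the arrow $r+1\to t$, whose opposite is $t\to r+1$. The other new entries are $b_{p,t}=-1$ for $t<p<t^+=r+1<p^+=+\infty$ with $i_p$ adjacent to $i$, i.e.\ the arrow $t\to p$, whose opposite is $p\to t$. This matches (b).

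I also need to confirm that no other entries change. Columns for older mutable $t$ only involve $s$ with $s<t^+$ or $s=t^+$, and these are already fixed since $t^+\le r$. The exception is entries with $s=r+1$: there $s<t<\dots$ is impossible, and $t<s<t^+$ fails since $t^+\le r$. Frozen-frozen arrows are absent on both sides by the Remark. Hence the induction closes.

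The main obstacle is the bookkeeping of the second clause of \eqref{eq: exchange matrix for RRR} with the $\pm\infty$ conventions, in particular verifying that no arrow between $r+1$ and an older mutable vertex arises beyond those in Proposition~\ref{prop: right inductive weave quiver}. I would check it directly from the inequalities as above.
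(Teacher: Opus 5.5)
Your proposal is correct and follows the paper's route: the paper obtains the first identity directly from \eqref{Eq: s(C)} and the second from Proposition~\ref{prop: right inductive weave quiver} together with \eqref{eq: exchange matrix for RRR}, and your induction on $r$ simply writes out that comparison in detail. Two small points of wording. First, the new column indexed by $(r+1)^-$ also has entries in the rows of older mutable vertices (for instance the previous occurrence of $i$); these are the skew-symmetric counterparts of entries in the unchanged old columns, so they encode no new arrows. Second, the case where $i$ does not occur in $\uii$ is already covered by your check of the row $s=r+1$.
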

\begin{proof}
The first equality follows from the definition \eqref{Eq: s(C)}, and the second equality follows directly from 
Proposition \ref{prop: right inductive weave quiver} and \eqref{eq: exchange matrix for RRR}.
\end{proof}

The following proposition shows that the quivers and cluster variables associated with $\fW_{\rxw}^B(\frakC)$ transform by the same mutation and permutation operations as the exchange matrices in Proposition~\ref{prop:boxmove_seed}.

\begin{proposition}\label{prop: double string boxmove}
Let $\frakC = (\frakc_t)_{t\in [1,r]}$ be an admissible chain associated with $\uii = (i_1,i_2,\ldots, i_r)$ and let $\frakc_k$ be a movable box.
\bnum
    \item Suppose $k=1$. Let $\rxw=(j_1,j_2,\dots,j_\ell) $ with $j_1= \bcol(\frakc_1)^*$, and set $\rxw':=(j_2,\dots,j_\ell,j_1^*)$. Then the quiver and cluster variables are changed by the box move $ \calB_1$ as follows:
    \begin{equation*}
       Q\bigl( \fW^B_{\rxw'} (\calB_1(\frakC) ) \bigr) =
            \begin{cases}
                \mu_1 \bigl( Q ( \fW^B_{\rxw} (\frakC )  )\bigr)
                & \text{if $\bcol(\frakc_1)=\bcol(\frakc_2)$}, \\[0.3em]
                \sigma_{1,2} \bigl( Q(\fW^B_{\rxw} (\frakC) ) \bigr)
                & \text{if $\bcol(\frakc_1)\neq\bcol(\frakc_2)$},
            \end{cases}
    \end{equation*}
as labeled quivers and
    \begin{equation*}
    \calA\bigl( \fW^B_{\rxw'} (\calB_1(\frakC) \bigr) \bigr) =
    \begin{cases}
    \mu_1\bigl(\calA \bigl(\fW^B_{\rxw} (\frakC) \bigr) \bigr) 
    & \text{if $\bcol(\frakc_1)=\bcol(\frakc_2)$},\\[0.3em]
    \sigma_{1,2}\bigl(\calA\bigl(\fW^B_{\rxw} (\frakC) \bigr)  \bigr)
    & \text{if $\bcol(\frakc_1)\neq\bcol(\frakc_2)$}.
    \end{cases}
    \end{equation*}
    
    \item Suppose $k\ge 2$. For any $\rxw \in \Seq(\Delta)$, the quiver and cluster variables are changed by the 
    box move $ \calB_k$ as follows:
    \begin{equation*}
       Q\bigl(\fW^B_{\rxw} ( \calB_k (\frakC))\bigr) =
            \begin{cases}
                \mu_k\bigl(Q(\fW^B_{\rxw} (\frakC))\bigr)
                & \text{if $\bcol(\frakc_k)=\bcol(\frakc_{k+1})$}, \\[0.3em]
                \sigma_{k,k+1}\bigl(Q(\fW^B_{\rxw} (\frakC))\bigr)
                & \text{if $\bcol(\frakc_k)\neq\bcol(\frakc_{k+1})$},
            \end{cases}
    \end{equation*}
as labeled quivers and
    \begin{equation*}
    \calA\bigl(\fW^B_{\rxw} ( \calB_k (\frakC))\bigr)=
    \begin{cases}
    \mu_k\bigl(\calA(\fW^B_{\rxw} (\frakC))\bigr)
    & \text{if $\bcol(\frakc_k)=\bcol(\frakc_{k+1})$},\\[0.3em]
    \sigma_{k,k+1}\bigl(\calA(\fW^B_{\rxw} (\frakC))\bigr)
    & \text{if $\bcol(\frakc_k)\neq\bcol(\frakc_{k+1})$}.
    \end{cases}
    \end{equation*}
\ee
Here, $\mu_k$ denotes mutation at vertex $k$ and $\sigma_{a,b}$ denotes the permutation swapping the indices $a$ and $b$.
\end{proposition}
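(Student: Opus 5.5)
The plan is to reduce both parts to Theorem~\ref{thm:cases}. We compare the double strings $\bfs_{\rxw}(\frakC)$ and $\bfs_{\rxw'}(\calB_k(\frakC))$ and show that they differ by exactly one of the local moves listed there. Then we identify which case occurs using Remark~\ref{Rmk: cases}. After the $\rxw$-prefix the Demazure product is already $w_\circ$, so every later step $h_t\bfX_t$ is non-additive and produces exactly one trivalent vertex. Consequently the trivalent vertex created by the step $h_t\bfX_t$ carries index $t$, which is consistent with the labeling of $Q(\frakC)$.

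\emph{Part (ii), $k\ge2$.} Since $\frakc_k$ is movable, $\calH_{k-1}\ne\calH_k$, and $\calB_k$ swaps these two letters. Take $\calH_{k-1}=\calL$ and $\calH_k=\calR$; the other case is symmetric. Then $\bfs_\rxw(\frakC)$ contains the consecutive pair $(h_k\bfL, h_{k+1}\bfR)$, and $\bfs_\rxw(\calB_k(\frakC))$ contains a pair $(h'_k\bfR, h'_{k+1}\bfL)$ at the same positions. All other entries agree, because envelopes and boxes outside positions $k,k+1$ are unchanged. The main combinatorial check is that $h'_k=h_{k+1}$ and $h'_{k+1}=h_k$, so that the two strings differ by the swap move in \eqref{Eq: moves for strings}. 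Write $\tfrakc_{k+1}=[a,b]$. In $\frakC$, the new left end $a$ and then the new right end $b$ are added. Hence $\frakc_k=[a,b-1\}$ has color $i_a$, so $h_k=i_a^*$, and $\frakc_{k+1}=\{a,b]$ has color $i_b=h_{k+1}$. In $\calB_k(\frakC)$, the right end $b$ is added first, giving $\frakc'_k=\{a+1,b]$ of color $i_b$; then $a$ is added, giving $\frakc'_{k+1}=[a,b\}$ of color $i_a$, which is recorded as $i_a^*$. So the strings differ exactly by $(\dots,i\bfL,j\bfR,\dots)\leftrightarrow(\dots,j\bfR,i\bfL,\dots)$ with $i=i_a^*$ and $j=i_b$, and both steps are non-additive since $u=w_\circ$.

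By Remark~\ref{Rmk: cases}, we are in Case~5 (mutation) when $j=i^*$, that is, $i_b=i_a$, and in Case~4 (relabeling) otherwise. It remains to match this with the color condition of the proposition. When $i_a=i_b$, the boxes $\frakc_k=[a,b\}$ and $\frakc_{k+1}$ both have color $i_a$, so $\bcol(\frakc_k)=\bcol(\frakc_{k+1})$ exactly in Case~5. Theorem~\ref{thm:cases} gives mutation at the vertex of the $i\bfL$ step, whose index is $k$, and the swap $\sigma_{k,k+1}$ in Case~4. These agree with the stated formulas for both the quivers and the cluster variables.

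\emph{Part (i), $k=1$.} Here $\calB_1$ flips $\calH_1$ and changes $c$, so the first box moves to the other side. I will realize this by moving one letter across the $\rxw$-block. Suppose first that the string for $\frakC$ has the initial $\uii$-part $h_1\bfR$ with $\calH_0=\calR$ (convention), followed by $h_2\bfL$, where $h_2=\bcol(\frakc_2)^*$. The condition $j_1=\bcol(\frakc_1)^*$ lets us rewrite $\rxw$ so that the sequence satisfies $\rxw=j_1\cdot(j_2\cdots j_\ell)$ and $(j_2\cdots j_\ell)\, j_1^*=\rxw'$ in $\Br^+$. We then use Case~0 to change $j_1\bfR^+$ into $j_1\bfL^+$, and Cases~1--3 to commute this additive $\bfL^+$ step past the $\bfR^+$ steps and the $h_1\bfR$ step. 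Cases~0--3 leave the seed unchanged, so this reaches, up to weave equivalence (Proposition~\ref{Prop: we seed}), a string whose prefix is the $\rxw'$-block followed by $h_1\bfR$ and $h_2\bfL$. Alternatively, the letter $j_1$ can be regarded as having been transported across $\Delta$, where it becomes $j_1^*$. After this, positions $1,2$ are handled exactly as in part (ii).

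I expect part (i) to be the main obstacle. The difficulty is the bookkeeping needed to show that the double string for $\calB_1(\frakC)$ with prefix $\rxw'$ genuinely coincides, up to Cases~0--3 and the equivalences of Section~\ref{Sec: DW}, with the string obtained from $\bfs_\rxw(\frakC)$ followed by the single swap. This amounts to tracking how $i\mapsto i^*$ interacts with conjugation by $\Delta$, namely $\sigma_i\Delta=\Delta\sigma_{i^*}$, and checking that the resulting $\uii$-sequences are related only by braid moves, which are absorbed into $\fW^{\rm T}$. In part (ii), by contrast, the only real work is the color identity $h'_k=h_{k+1}$, $h'_{k+1}=h_k$.
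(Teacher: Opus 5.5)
Your part (ii) is correct and is exactly the paper's argument; your explicit check that $h'_k=h_{k+1}$ and $h'_{k+1}=h_k$ is a detail the paper leaves implicit. Part (i), however, has a genuine gap.

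First, you work in the case $\calH_1=\calL$ (so $\bfX_2=\bfL$). With the stated normalization $j_1=\bcol(\frakc_1)^*$, $\rxw'=(j_2,\dots,j_\ell,j_1^*)$, this is precisely the case in which $\bfs_{\rxw}(\frakC)$ and $\bfs_{\rxw'}(\calB_1(\frakC))$ have different $\uii$-sequences. Write $\tfrakc_2=[c-1,c]$. The segments that differ are $(i_{c-1}^*,j_1,\dots,j_\ell,i_c)$ and $(j_2,\dots,j_\ell,i_c,i_{c-1},i_c)$, and they agree only as braids. For example, in type $A_2$ with $\uii=(1,1)$ one gets $(2,2,1,2,1)$ versus $(1,2,1,1,1)$. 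The moves \eqref{Eq: moves for strings} preserve the $\uii$-sequence, so Theorem~\ref{thm:cases} cannot connect these two strings, and the two seeds live on different braid varieties. Pushing the discrepancy into $\fW^{\rm T}$ is not an option, since the statement concerns $\fW^B$ alone.

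The paper instead assumes $\bfX_2=\bfR$. The case $\calH_1=\calL$ is then obtained by exchanging the roles of $\frakC$ and $\calB_1(\frakC)$, and of $\rxw$ and $\rxw'$. This is harmless because $\calB_1$, $\mu_1$ and $\sigma_{1,2}$ are involutions and the color condition is symmetric.

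Second, your bookkeeping of the Case~0--3 moves is off. When $j_1\bfL^+$ passes $h_1\bfR$ with $h_1=j_1^*$, the move is Case~2, not Case~3. Here $h_1\bfR$ becomes the additive letter $j_1^*\bfR^+$ that completes the $\rxw'$-block, while $j_1$ survives as the non-additive step $j_1\bfL=h_1^*\bfL$. So one reaches
\[
(j_2\bfR^+,\dots,j_\ell\bfR^+,j_1^*\bfR^+,j_1\bfL,h_2\bfX_2,h_3\bfX_3,\dots),
\]
not ``the $\rxw'$-block followed by $h_1\bfR$, $h_2\bfL$''. In your case this reads $(\dots,j_1\bfL,h_2\bfL,\dots)$, to which no swap of type $(i\bfL,j\bfR)$ applies, so the reduction to part (ii) breaks down.

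With $\bfX_2=\bfR$, the missing step is the single move $(j_1\bfL,h_2\bfR)\leftrightarrow(h_2\bfR,j_1\bfL)$. It yields exactly $\bfs_{\rxw'}(\calB_1(\frakC))$, since $h'_1=h_2$ and $h'_2=h_1^*=j_1$. By Remark~\ref{Rmk: cases}, this move is Case~5 iff $h_2=j_1^*=h_1$, i.e.\ $\bcol(\frakc_1)=\bcol(\frakc_2)$, and Case~4 otherwise. Since the $\bfL$-step carries index~$1$, this gives $\mu_1$ or $\sigma_{1,2}$ as claimed.
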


\begin{proof}
Let $\rxw = (j_1,j_2, \ldots, j_\ell)$, and denote the LR sequences of $\frakC$ and $\frakC' := \calB_k(\frakC)$ by 
$$
\frakH=(\calH_1,\calH_2,\ldots,\calH_{r-1}) \qtq 
\frakH ' = (\calH_1',\calH_2',\ldots,\calH_{r-1}'),
$$
respectively. We set the double string associated with $\frakC$ as
\begin{align*}
\bfs &:= \bfs_\rxw(\frakC) = (j_1 \bfR^+,\ldots, j_\ell \bfR^+, h_1 \bfX_1,  \ldots, h_r \bfX_r).
\end{align*}

(i) Suppose $k=1$. We assume that $ j_1 = h_1^*$, and write $ \rxw' = (j_2, \ldots, j_\ell, j_1^*)$. We set 
\begin{align*}
\bfs' &:= \bfs_{\rxw'}(\frakC') = (j_2 \bfR^+,\ldots, j_\ell \bfR^+, j_1^* \bfR^+, h_1' \bfX_1', h_2' \bfX_2', \ldots, h_r' \bfX_r').
\end{align*}
Since the box move $\calB_1$ only affects $\calH_1$ and $\calH_1'$,
we may assume $\bfX_2=\bfR$ without loss of generality.
By construction, we have 
\begin{itemize}
\item $\bfX_1 = \bfX_1' = \bfR$ and   $ \bfX_2' = \bfL$, 
\item $h_1' = h_2$ and $h_2' = h_1^*$,
\item $ \bfX_t = \bfX_t'$ and $ h_t = h_t'$ for any $t \ge 3 $,
\end{itemize}
which implies 
$$
\uii(\bfs) = \uii(\bfs').
$$
To compare $\bfs$ and $\bfs'$, we consider the following sequence of moves given in \eqref{Eq: moves for strings}:
\begin{align*}
\bfs=&
(j_1 \bfR^+,j_2 \bfR^+,\ldots, j_\ell \bfR^+,
h_1 \bfR, h_2 \bfR,h_3 \bfX_3,\ldots,  h_r \bfX_r) \\
\underset{\text{\rm Case~0}}{\longleftrightarrow} & 
(j_1 \bfL^+,j_2 \bfR^+,\ldots, j_\ell \bfR^+,
h_1 \bfR, h_2 \bfR,h_3 \bfX_3,\ldots,  h_r \bfX_r) \\
\underset{\text{\rm Case~1}}{\longleftrightarrow} & 
(j_2 \bfR^+,j_1 \bfL^+,\ldots, j_\ell \bfR^+,
h_1 \bfR, h_2 \bfR,h_3 \bfX_3,\ldots,  h_r \bfX_r) \\
\vdots  \quad  & \qquad \qquad \qquad \qquad \qquad \vdots  \\
\underset{\text{\rm Case~1}}{\longleftrightarrow} & 
(j_2 \bfR^+,j_3 \bfR^+,\ldots, j_1 \bfL^+,
h_1 \bfR, h_2 \bfR,h_3 \bfX_3,\ldots,  h_r \bfX_r) \\
\underset{\text{\rm Case~2}}{\longleftrightarrow} & 
(j_2 \bfR^+,j_3 \bfR^+,\ldots, h_1 \bfR^+,
j_1 \bfL, h_2 \bfR,h_3 \bfX_3,\ldots,  h_r \bfX_r)\\
\underset{(*)}{\longleftrightarrow} & 
(  j_2  \bfR^+,  j_3   \bfR^+,\ldots, h_1 \bfR^+,
h_2 \bfR,j_1 \bfL,h_3 \bfX_3,\ldots,  h_r \bfX_r)
= \bfs'.
\end{align*}
As all moves except the last one are of Cases~0--2, such moves do not change the labeled quiver  and 
 the cluster variables by Theorem~\ref{thm:cases}.
Thus, the change in the quiver and cluster variables is determined entirely by the last move $(*)$.
By Remark \ref{Rmk: cases}, we have 
\begin{itemize}
\item $(*)$ is (Case 5) if $h_1 = h_2$, i.e., $\bcol(\frakc_1)=\bcol(\frakc_2)$,
\item $(*)$ is (Case 4) if $h_1 \ne h_2$, i.e., $\bcol(\frakc_1)\ne\bcol(\frakc_2)$.
\end{itemize}
Thus the assertion follows from Theorem~\ref{thm:cases}.

(ii)
 Suppose $k\ge 2$. We set 
\begin{align*}
\bfs' &:= \bfs_{\rxw}(\frakC') = (j_1 \bfR^+,\ldots,  j_\ell \bfR^+, h_1' \bfX_1',  \ldots, h_r' \bfX_r').
\end{align*}
By construction, we have 
\begin{itemize}
\item $\bfX_k \ne \bfX_{k+1}  $,
\item $h_k = h_{k+1}'$ and $h_{k+1} = h_k'$, 
\item $ \bfX_t = \bfX_t'$ and $ h_t = h_t'$ for any $   t   \notin \{ k, k+1 \}$, 
\end{itemize}
which says that $ \uii(\bfs) = \uii(\bfs') $. The double strings $\bfs$ and $\bfs'$ 
differ by one move:
\begin{align*}
\bfs=&
(j_1 \bfR^+,\ldots, j_{  \ell  } \bfR^+,
h_1 \bfX_1, \ldots, h_k \bfX_k, h_{k+1} \bfX_{k+1},\ldots,  h_r \bfX_r) \\
  \underset{(**)}{\longleftrightarrow}\ &  
(j_1 \bfR^+,\ldots, j_{ \ell } \bfR^+,
h_1 \bfX_1, \ldots, h_{k+1} \bfX_{k+1}, h_k \bfX_k,\ldots,  h_r \bfX_r)
= \bfs'.
\end{align*}

By Remark \ref{Rmk: cases}, we have 
\begin{itemize}
\item $(**)$ is (Case 5) if  $h_{k+1}=h_k^*$, i.e., $\bcol(\frakc_k)=\bcol(\frakc_{k+1})$,
\item $(**)$ is (Case 4) if  $h_{k+1} \ne h_k^*$, i.e., $\bcol(\frakc_k)\ne\bcol(\frakc_{k+1})$,
\end{itemize}
which implies the assertion by Theorem~\ref{thm:cases}.
\end{proof}

The following lemma tells us how cluster variables are changed under the weave $\fW_{\rxw}^{\rm T}(\frakC) : \rxw\uii \rightarrow \uii_\rxw (\frakC)$ given in \eqref{Eq: fWT}.

\begin{lemma}\label{lem:cluster_variables_modified_weave}
Let $\frakC = (\frakc_t)_{t\in [1,r]}$ be an admissible chain associated with $\uii \in \Seq(\ttb)$. Consider the two braid varieties 
$$
X(\uii_\rxw (\frakC)) \qtq X(\rxw\uii),
$$ 
and let $y := (y_1,\ldots,y_{\ell+r})$ and $z := (z_1, \ldots, z_{\ell+r})$ denote the coordinates associated to $X(\uii_\rxw (\frakC))$ and $X(\rxw\uii)$, respectively (see \eqref{Eq: Br coord}).
We set 
$$
\fW^T := \fW_\rxw^T(\frakC), \quad  \fW^B := \fW_\rxw^B(\frakC) \qtq \fW := \fW_\rxw(\frakC),
$$
and consider the cluster variables arising from $\fW^B$ and $\fW$ as follows:
\begin{align*}
\calA \left(\fW^B\right) &:= \left\{ A_t \left(\fW^B \right) \right\}_{ t\in [1,r]} \subset \C[X(\uii_\rxw (\frakC))], \\
\calA(\fW ) &:= \left\{ A_t (\fW ) \right\}_{ t\in [1,r]} \subset \C[X(\rxw\uii)].
\end{align*}
\bnum
\item 
There exist polynomials $f_k (z) \in \mathbb{Z}[z_1, \ldots, z_{\ell+r}]$ for $k \in [1,\ell+r]$ such that
\[
A_t (\fW)(z) = A_t \left(\fW^B \right) \left(f_1(z),f_2(z), \ldots, f_{\ell+r}(z) \right) \quad \text{for all } t \in [1,r].
\]
Moreover, the polynomials $f_k(z)$ depend only on $\fW^T$.
\item \label{lem:cluster_variables_modified_weave (ii)}
We rewrite the coordinates $z = (w,z) = (w_1, \ldots, w_{\ell}, z_1, z_2, \ldots, z_{r})$ for $X(\rxw\uii)$.
If $\frakc_1 = [s,s]$,  then we have 
$$
A_1 (\fW) = z_{s}.
$$ 
\ee
\end{lemma}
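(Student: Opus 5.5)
For part (i), I will follow the inductive definition of cluster variables in \cite[Section~5.2]{CGGLSS25}. Coordinates are assigned to the top strands and propagated downward through the weave. At each 4-valent or 6-valent vertex, the coordinates on the outgoing edges are given by universal polynomial formulas in the incoming ones. These are the coordinate changes realizing the braid relations $B_i(a)B_k(b)=B_k(b)B_i(a)$ and $B_i(a)B_j(b)B_i(c)=B_j(b')B_i(c')B_j(a')$. In simply-laced type these have integer coefficients, for instance $(a,b,c)\mapsto(c,\,ac-b,\,a)$ up to the conventions of \cite{CGGLSS25}. At a trivalent vertex the functions $A_t$ are recorded.

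Since $\fW^T$ contains only 4- and 6-valent vertices, composing these local substitutions along $\fW^T$ gives polynomials $f_k(z)\in\Z[z_1,\ldots,z_{\ell+r}]$. They express the coordinates on the bottom slice of $\fW^T$, which is the top of $\fW^B$, in terms of $z$. Moreover $B_{\rxw\uii}(z)=B_{\uii_\rxw(\frakC)}(f(z))$, so $f$ induces an isomorphism $X(\rxw\uii)\to X(\uii_\rxw(\frakC))$. Because the recursive construction of $A_t$ in $\fW=\fW^B\circ\fW^T$ only sees the bottom part $\fW^B$ once the coordinates entering it are fixed, I get $A_t(\fW)(z)=A_t(\fW^B)(f(z))$. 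By construction the $f_k$ depend only on $\fW^T$. I also need that $A_t$ is defined purely locally, so that in \cite{CGGLSS25} no normalization depends on the top word. For this I would cite \cite[Section~5.2]{CGGLSS25} and Proposition~\ref{Prop: we seed}, which also covers the independence from the choice of $\fW^T$.

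For part (ii), let $\frakc_1=[s,s]$. Then $\bfs_\rxw(\frakC)$ begins with $\rxw$ (all entries $\bfR^+$) followed by $h_1\bfR$, where $h_1=i_s$ since $\calH_0=\calR$. So the first trivalent vertex of $\fW^B$ arises from appending $i_s$ on the right of $\rxw$. At this step $\delta(\rxw)=w_\circ$ already has a right descent at $i_s$. In the inductive weave one rewrites $\rxw$ by braid moves into a word ending in $i_s$ and merges. By \cite[Proposition~5.20]{CGGLSS25}, as used in Proposition~\ref{prop: conf variables}, the resulting variable is the coordinate carried by the new strand. In $\uii_\rxw(\frakC)$ this strand is the letter $i_s$ immediately after $\rxw$, at position $|\uii^\rmL(\frakC)|+\ell+1$. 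By Proposition~\ref{prop: conf variables} with $t=1$, we get $A_1(\fW^B)=\PM_{i_s}(B_{i_s}(y))=y$, where $y$ is that coordinate.

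It then remains to track this strand through $\fW^T$. In forming $\uii_\rxw(\frakC)$ from $\rxw\uii$, the letters $i_t$ with $\calH_{t-1}=\calL$ (that is, $t<s$) are moved across $\rxw$ to the left, becoming $i_t^*$ via $\Delta\sigma_i=\sigma_{i^*}\Delta$. The letter $i_s$ only passes, inside $\uii$, the letters $i_t$ with $t<s$ that lie between $\rxw$ and it; it never crosses $\rxw$. I would choose $\fW^T$ to move each such letter across $\rxw$ in turn; these are exactly $i_{s-1},\ldots,i_1$, all of which must move left because $\frakc_1=[s,s]$ forces $\calH_1,\ldots,\calH_{s-1}=\calL$. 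Each of them is first carried rightmost-first to the boundary of $\rxw$, which involves no vertex with the $i_s$ strand. The strand of $i_s$ then simply translates, and its coordinate stays $z_s$, so $f_{\mathrm{pos}}(z)=z_s$ and $A_1(\fW)=z_s$.

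The main obstacle I expect is this last point. Moving $i_t$ across $\rxw$ uses 6-valent moves that alter the coordinates on the strands of $\rxw$, but it must not alter the coordinate on $i_s$. I would verify this carefully using that braid moves only affect the strands involved locally. If that fails, the fallback is to compute $A_1(\fW)$ directly via Proposition~\ref{prop: conf variables} on the $\Conf$ side.
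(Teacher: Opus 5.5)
Your proposal follows the paper's proof in both parts. For (i), you push the top coordinates through the braid-move-only weave $\fW^T$ using the integral local rules; your $(a,b,c)\mapsto(c,ac-b,a)$ is exactly Figure~\ref{fig:z-variable-change}. For (ii), you combine $A_1(\fW^B)=y_{\ell+s}$ with a choice of $\fW^T$ that never touches the strands in positions $\ge \ell+s$, so $f_{\ell+s}=z_s$.

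The locality point you defer to \cite{CGGLSS25} is settled in the paper by one observation. The $u$-components of the $(\tilde z,u)$-labeling are identically $1$ on $\fW^T$, since no Lusztig cycle starts there. So the bottom of $\fW^T$ carries data $(f_k(z),1)$, of the same form as top-boundary data for $\fW^B$. Your ``main obstacle'' is vacuous, because the local rules change only the labels of strands meeting at a vertex.

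One claim must be corrected, although the conclusion you draw from it is true. The condition $\frakc_1=[s,s]$ does not force $\calH_1=\cdots=\calH_{s-1}=\calL$: in Example~\ref{ex:double_weave}, $s=3$ but $\calH_2=\calR$. Also, ``the letters $i_t$ with $\calH_{t-1}=\calL$'' confuses the chain index $t$ with the position of the effective end in $\uii$.

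What holds is that $c=s$ gives exactly $s-1$ left moves, with effective ends $s-1,\dots,1$ in order of occurrence. Hence $\uii^\rmL(\frakC)=(i_1^*,\dots,i_{s-1}^*)$ and $\uii^\rmR(\frakC)=(i_s,\dots,i_r)$, however the moves interleave. Then $\rxw\,(i_1,\dots,i_{s-1})$ and $(i_1^*,\dots,i_{s-1}^*)\,\rxw$ are expressions of the same positive braid $\Delta\sigma_{i_1}\cdots\sigma_{i_{s-1}}=\sigma_{i_1^*}\cdots\sigma_{i_{s-1}^*}\Delta$. So $\fW^T$ can be chosen supported on the first $\ell+s-1$ strands, and your ``rightmost-first'' recipe is not needed.

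Finally, Proposition~\ref{prop: conf variables} is stated for $\rind{\rxw\uii}$, not for $\fW^B$, which has left steps when $s>1$. To use it, you must note that the first trivalent vertex of $\fW^B$ lies in the sub-weave $\rind{\rxw\,(i_s)}$, with the other strands passing by untouched above it. The paper instead cites \cite[Definition~5.8, Theorem~5.19(3), Remark~6.7]{CGGLSS25} directly for $A_1(\fW^B)=y_{\ell+s}$.
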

\begin{proof}
 By construction, $\fW =  \fW^B \circ \fW^T $ and $\fW^T$ consists of $6$-valent and $4$-valent vertices, and contains no $3$-valent vertices.

(i) We trace the coordinates $z = (z_1, \ldots, z_{\ell+r})$ downward through $\fW^T$ using the assignment procedure of \cite[Section~5.2]{CGGLSS25}. By \cite[Theorem~5.12]{CGGLSS25}, there exists a labeling $\zeta$ of $\fW$ producing cluster variables for $\fW$ (see \cite[Definition 5.3]{CGGLSS25} for labeled weaves). 
Let $E$ be the set of all (solid) edges of $\fW$ and write $\zeta(e) = (\tilde{z}_e, u_e)$ for $e \in E$, where $\tilde{z}_e$ and $ u_e $ are rational functions defined by using the inductive relations in Definition \cite[Definition 5.8]{CGGLSS25}. 
If $e_i \in E$ is the edge corresponding to the $i$th strand on the top of $\fW$, then we have $\zeta(e_k) =(\tilde{z}_{e_k}, u_{e_k}) = (z_k, 1)$, which serves as the initial data to determine other values $\zeta(e)$ in $\fW$.   

The functions $u$ depend entirely on the Lusztig cycles originating from $3$-valent vertices. 
Since there are no $3$-valent vertices in $\fW^T$, the functions $u$ are identically $1$ at all edges in $\fW^T$. 
Under the condition $u=1$, the inductive relations for the functions $\tilde{z}$ at $6$-valent and $4$-valent vertices (\cite[Definition~5.8]{CGGLSS25}) degenerate to polynomial relations over $\mathbb{Z}$. 
The polynomial relations are described  in Figure~\ref{fig:z-variable-change}.
Consequently, propagating the coordinates $z$ to the bottom boundary of $\fW^T$ yields variables of the form 
\begin{align} \label{Eq: (z,u)}
(\tilde{z}_{e_k}, u_{e_k}) = (f_k(z), 1) \qquad \text{ for some $f_k \in \mathbb{Z}[z_1, \ldots, z_{\ell+r}]$.}
\end{align}
Since this bottom boundary coincides with the top of the weave $\fW^B$, the uniqueness of the downward inductive procedure ensures that the cluster variables $A_t (\fW)(z)$ coincide with those obtained from $A_t \left(\fW^B \right)$ by substituting $y$ with $(f_1(z), \ldots, f_{\ell+r}(z))$.

(ii) By definition, the length of $\uii^L(\frakC)$ is $s-1$ and 
$$
\uii_\rxw(\frakC) = \uii^L(\frakC) \  \rxw \  \uii^R(\frakC).
$$
Since $\sigma_{i^*} \Delta =  \Delta \sigma_{i}$ for any $i\in I$, we can choose $\fW^T: \rxw \uii \to \uii_\rxw(\frakC)$ such that there are no vertices involving the last $r-s+1$ strands in $\fW^T$.  
From \eqref{Eq: (z,u)}, together with the coordinates \( (w_1,\ldots,w_\ell,z_1,\ldots,z_r) \) for \(X(\rxw\uii)\), 
we have
$$
f_{\ell+s} = z_s
$$ 
i.e., $(\tilde{z}_{e_{\ell + s}}, u_{e_{\ell + s}}) = (z_s, 1)$.
On the other hand, by \cite[Definition 5.8, Theorem 5.19 (3) and Remark 6.7]{CGGLSS25}, we have 
$$
A_1 (\fW^B) = y_{\ell+s}, 
$$
which yields the assertion by (i).
\end{proof}

\begin{figure}[htbp]
  \centering
  \begin{tikzpicture}[scale=1]
    \draw [ultra thick] 
         (0,0) -- (0,-1) (0,0) -- (-1,1) (0,0) -- (1,1);
         \draw [ultra thick, color=purple] 
         (0,0) -- (0,1) (0,0) -- (-1,-1) (0,0) -- (1,-1);
         \node[above] at (-1.3,1) {$\tilde{z}$};
         \node[above] at (0,1) {$\tilde{z}'$};
         \node[above] at (1.3,1) {$\tilde{z}''$};

         \node[below] at (-1.3,-1) {$\tilde{z}''$};
         \node[below] at (0,-1) {$\tilde{z}\tilde{z}'' - \tilde{z}'$};
         \node[below] at (1.3,-1) {$\tilde{z}$};
  \end{tikzpicture}
   \qquad \qquad \qquad
  \begin{tikzpicture}[scale=1]
    \draw [ultra thick] 
         (-1,1) -- (1,-1);
         \draw [ultra thick, color=teal] 
         (1,1) -- (-1,-1);
         \node[above] at (-1.3,1) {$\tilde{z}$};
         \node[above] at (1.3,1) {$\tilde{z}'$};

         \node[below] at (-1.3,-1) {$\tilde{z}'$};
         \node[below] at (1.3,-1) {$\tilde{z}$};
  \end{tikzpicture}  
  \caption{Transformations of the $\tilde{z}$-variables at $6$-valent and $4$-valent vertices under the condition $u=1$.}
  \label{fig:z-variable-change}
\end{figure}

We now have the main theorem of this subsection. 
\begin{theorem}\label{thm: modified weave boxmove}
Let $\frakC$ be an admissible chain of $i$-boxes associated with $\uii \in \Seq(\ttb)$ and let $\frakc_k$ be a movable box. For any $\rxw \in \Seq(\Delta)$, we have 
\begin{equation*}
    Q\bigl(\fW_\rxw(\calB_k(\frakC))\bigr) =
        \begin{cases}
            \mu_k\bigl(Q( \fW_\rxw(\frakC))\bigr)
            & \text{if $\bcol(\frakc_k)=\bcol(\frakc_{k+1})$}, \\[0.3em]
            \sigma_{k,k+1}\bigl(Q(\fW_\rxw(\frakC))\bigr)
            & \text{if $\bcol(\frakc_k)\neq\bcol(\frakc_{k+1})$},
        \end{cases}
\end{equation*}
as labeled quivers and
\begin{equation*}
\calA\bigl(\fW_\rxw(\calB_k(\frakC))\bigr)=
\begin{cases}
\mu_k\bigl(\calA( \fW_\rxw(\frakC))\bigr)
& \text{if $\bcol(\frakc_k)=\bcol(\frakc_{k+1})$},\\[0.3em]
\sigma_{k,k+1}\bigl(\calA( \fW_\rxw(\frakC))\bigr)
& \text{if $\bcol(\frakc_k)\neq\bcol(\frakc_{k+1})$}.
\end{cases}
\end{equation*}
\end{theorem}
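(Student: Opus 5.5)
The plan is to reduce the statement to Proposition~\ref{prop: double string boxmove}. Lemma~\ref{lem:cluster_variables_modified_weave} transports seeds from $\fW^B_\rxw(\frakC)$ to $\fW_\rxw(\frakC)$ by a polynomial substitution that depends only on the braid-move weave $\fW^T_\rxw(\frakC)$. We treat the quiver and the cluster variables separately.

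\emph{Quiver.} Since $\fW^T_\rxw(\frakC)$ contains no trivalent vertices, the Lusztig cycles of $\fW_\rxw(\frakC)$ all start inside $\fW^B_\rxw(\frakC)$ and propagate only downward. Hence the local intersection data, and with it the frozen set, coincide. This gives $Q(\fW_\rxw(\frakC)) = Q(\fW^B_\rxw(\frakC))$ as labeled quivers. For $k\ge 2$ the quiver claim is then exactly Proposition~\ref{prop: double string boxmove}(ii).

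\emph{Cluster variables, $k\ge2$.} The $\uii$-sequences satisfy $\uii_\rxw(\frakC)=\uii_\rxw(\calB_k(\frakC))$. So we may take $\fW^T_\rxw(\calB_k(\frakC)) = \fW^T_\rxw(\frakC)$; by Proposition~\ref{Prop: we seed} this choice is harmless. Both weaves then share the same substitution $y=(f_1(z),\dots,f_{\ell+r}(z))$ from Lemma~\ref{lem:cluster_variables_modified_weave}(i). Permutations obviously commute with substitution. Mutation also commutes with substitution, because the exchange relation is a rational identity among the cluster variables determined by the quiver, and the quiver is unchanged. Applying the substitution to Proposition~\ref{prop: double string boxmove}(ii) therefore yields the claim for $\fW_\rxw$.

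\emph{Case $k=1$.} Here Proposition~\ref{prop: double string boxmove}(i) compares $\fW^B_\rxw(\frakC)$ with $\fW^B_{\rxw'}(\calB_1(\frakC))$, where $\rxw'$ is the rotated expression. Proceed in three steps.
\begin{enumerate}
\item Change $\rxw$ to an expression beginning with $j_1=\bcol(\frakc_1)^*$. Any two expressions of $\Delta$ are related by braid moves, so by Proposition~\ref{Prop: we seed} the seed of $\fW_\rxw(\frakC)$ is independent of $\rxw$. (A braid-move prefix on $\rxw\uii$ changes the top coordinates by a polynomial automorphism of the ambient space, fixing $z_1,\dots,z_r$.) Likewise we may replace $\rxw$ by $\rxw'$ for $\calB_1(\frakC)$.
\item Observe that $\uii_\rxw(\frakC)=\uii_{\rxw'}(\calB_1(\frakC))$, as shown in the proof of Proposition~\ref{prop: double string boxmove}(i). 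Then take $\fW^T_{\rxw'}(\calB_1(\frakC))$ to be $\fW^T_\rxw(\frakC)$ composed with the braid weave $\rxw'\uii\to\rxw\uii$ that realizes $\sigma_{j_1}\Delta'=\Delta\sigma_{j_1^*}$.
\item Transport Proposition~\ref{prop: double string boxmove}(i) through the substitution, exactly as in the case $k\ge2$.
\end{enumerate}

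\emph{Main obstacle.} The hard part is step (1) of the case $k=1$. We must make sure that the identification $X(\rxw\uii)\cong X(\rxw'\uii)$, and hence the coordinates $z$, is compatible across the two choices of half-twist expression, so that the seeds really live in the same ring. I would first try to argue via equivalence of weaves on the fixed top word $\rxw\uii$, using Proposition~\ref{Prop: we seed}. Failing that, I would check directly that the braid-move substitution fixes the last $r$ coordinates, as in the proof of Lemma~\ref{lem:cluster_variables_modified_weave}(ii).
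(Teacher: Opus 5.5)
Your proposal is correct and follows essentially the same route as the paper. The quiver identity comes from $Q(\fW_\rxw(\frakC))=Q(\fW^B_\rxw(\frakC))$ together with Proposition~\ref{prop: double string boxmove}. The cluster-variable identity comes from attaching the same braid-move weave $\fW^T_\rxw(\frakC)$ to both double inductive weaves, whose tops coincide once $\rxw,\rxw'$ are chosen as in that proposition, so that the common substitution of Lemma~\ref{lem:cluster_variables_modified_weave} commutes with mutation and permutation.

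The step you flag as the main obstacle for $k=1$ — that the seed does not depend on the choice of $\rxw\in\Seq(\Delta)$ — is handled in the paper in one line via weave equivalence and Proposition~\ref{Prop: we seed}. Your observation that the braid-move prefix fixes $z_1,\dots,z_r$ is what makes this compatible with the identification \eqref{Eq: 3 iso}.
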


\begin{proof}
Since the weave $\fW^T_\rxw( \frakC)$ has no 3-valent vertices, it does not contribute any vertices to the associated quiver. Hence
$$
Q(\fW_\rxw( \frakC)) = Q(\fW^B_\rxw( \frakC))
$$
as labeled quivers. Thus the equality for quivers follows directly from Proposition \ref{prop: double string boxmove}.

We now focus on the equality for cluster variables. 
Since the weave equivalence class of $ \fW_{\rxw}(\frakC)$ is independent of the choice $\rxw \in \Seq(\Delta)$, there is no harm in verifying the assertion using specific expressions by Proposition \ref{Prop: we seed}.

Let $\rxw$ and $\rxw'$ be the expression sequences chosen as in Proposition~\ref{prop: double string boxmove} if $k=1$ and $\rxw = \rxw'$ otherwise. 
With this choice, the $\uii$-sequences on the tops of the weaves $\fW^B_{\rxw} ( \frakC )$ and $\fW^B_{\rxw'} ( \calB_k(\frakC))$ coincide. 
Thus $\fW(\frakC)$ and $ \fW (\calB_k(\frakC))$ can be constructed by attaching the same weave $\fW^T_{\rxw}(\frakC)$ to the tops of $\fW^B_{\rxw} ( \frakC )$ and $\fW^B_{\rxw'} ( \calB_k(\frakC))$, respectively.
Lemma~\ref{lem:cluster_variables_modified_weave} says that the weave $\fW^T_{\rxw}(\frakC)$ induces the same polynomial substitution on the coordinates. 
Since the substitution of cluster variables by the same polynomials is compatible with quiver mutations and permutations, Proposition~\ref{prop: double string boxmove} induces the assertion. 
\end{proof}

\subsection{Cluster algebra isomorphisms}

Recall the bosonic extension $\tobbA_\C(\ttb)$ and its seed $\seed(\frakC)$ given in Section \ref{Sec: BE}.

\begin{theorem}\label{thm: main sec3}
Let $\g$ be a simple Lie algebra of finite $ADE$ type and let $\ttb \in \Br^+$.   
We take $\rxw \in \Seq(\Delta)$, $\uii \in \Seq(\ttb)$ and set $r:= \ell(\uii)$. Then there exists a unique algebra isomorphism
\[
\varphi_{\uii} \colon \tobbA_\C(\ttb) \xrightarrow{\ \sim\ }
\C[X(\rxw \uii)]
\]
such that 
\bna
\item for any $k \in [1,r]$, 
$$
\varphi_{\uii} (\bFF_{\uii, k})  = z_{k}, 
$$ 
where $\bFF_{\uii, k}$ are the PBW vectors in $\tobbA_\C(\ttb)$ defined in \eqref{Eq: PBW vector} and 
$z = (z_1, \ldots, z_r)$ are the coordinates of $\C[X(\rxw \uii)]$ via the isomorphism \eqref{Eq: 3 iso}, 
\item for any admissible chain $\frakC$ of $i$-boxes associated with $\uii$, the two seeds 
\[
\seed(\frakC) = \bigl( \mD(\frakC), Q(\frakC)\bigr)  \qtq
\Sigma\bigl( \fW_\rxw (\frakC)\bigr) = \bigl( \calA(\fW_\rxw(\frakC)), Q( \fW_\rxw (\frakC)) \bigr)
\]
are identified by $\varphi_{\uii}$ in the following sense:
$$
\calA\bigl(\fW_\rxw(\frakC)\bigr)
=\varphi_{\uii}\bigl(\mD(\frakC)\bigr) \qtq
Q\bigl( \fW_\rxw(\frakC)\bigr) = Q(\frakC)^{\rm op}
$$
as labeled quivers.
\ee
\end{theorem}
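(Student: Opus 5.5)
The plan is to define $\varphi_{\uii}$ directly on generators and then propagate the seed compatibility from the distinguished chain $\frakC_\calR$ to all admissible chains by box moves.

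\textbf{Step 1: definition and bijectivity.} By \eqref{Eq: iso PA}, $\tobbA_\C(\ttb)\simeq\tpP(\uii)$ is the localization of the polynomial ring in the $\bFF_{\uii,k}$ at the frozen variables $\rmF_t$. By \eqref{Eq: 3 iso}, $\C[X(\rxw\uii)]\simeq\calB(\uii)$ is $\C[z_1,\dots,z_r]$ localized at $f_i(z)=\PM_i(B_\uii(z))$. So I first define a ring map $\pP(\uii)\to\C[z_1,\dots,z_r]$ by $x_k\mapsto z_k$. This is an isomorphism of polynomial rings, and it is unique subject to (a), which gives the uniqueness claim.

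It remains to check that the localizing sets correspond. Here I use $\frakC_\calR$. By Lemma~\ref{lem: RRR and riw}, $\fW^B_\rxw(\frakC_\calR)=\rind{\rxw\uii}$. By Proposition~\ref{prop: conf variables}, $A_t(\rind{\rxw\uii})=\PM_{i_t}(B_{i_1}(z_1)\cdots B_{i_t}(z_t))$. I would then show that
\[
\psi_\uii^{-1}\bigl(\mD(\frakc_t)\bigr)(x\mapsto z)=A_t(\rind{\rxw\uii})\quad\text{for all } t\in[1,r],
\]
where $\frakc_t=\{1,t]$.

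\textbf{Step 2: the identity for $\frakC_\calR$ (main obstacle).} Both sides satisfy the same recursion with the same initial values.
\begin{itemize}
\item On the bosonic side, $\mD[a,b]$ with $a=1(i)^+$ is determined by the T-system \eqref{Def: minors}, with $\mD[k]=\bFF_k$.
\item On the braid side, the generalized minors $\PM_{i}$ of partial products satisfy the Fomin–Zelevinsky identity $\PM_{\varpi_i}\PM_{s_i\varpi_i}=\PM\PM+\prod_{j\sim i}\PM_{\varpi_j}$, suitably adapted to the products of $B_i(z)$.
\item Alternatively, one can use the weave labelling recursion of \cite[Section 5]{CGGLSS25} together with the exchange relations read off from the quiver $Q(\rind{\rxw\uii})=Q(\frakC_\calR)^{\rm op}$.
\end{itemize}
For the base cases, $A_t=z_t$ when $t$ is the first occurrence of $i_t$, which matches $\mD[t]=\bFF_t$.

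Concretely, I would extend the identification to all $i$-boxes $[a,b]$, matching $\mD[a,b]$ with a ratio of minors $\PM_{i}(B_{i_1}\cdots B_{i_b})/\PM_{i}(B_{i_1}\cdots B_{i_{a^-}})$-type expressions, or more simply with minors of the partial product $B_{i_a}(z_a)\cdots B_{i_b}(z_b)$. I would then verify the T-system by a Desnanot–Jacobi/Plücker-type identity. This matching of recursions is where I expect the real work to lie; Example~\ref{ex:initial_variables} versus Example~\ref{ex: ibox seed} confirms it in a small case.

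Once the identity holds, the frozen variables $\rmF_t$ go to the $f_i(z)$, up to the frozen $A_t$ being exactly the full minors $\PM_i(B_\uii(z))$. Hence the map extends to an isomorphism of the localizations. This proves (a), and also proves (b) for $\frakC_\calR$, because $\fW^T_\rxw(\frakC_\calR)$ is trivial and Lemma~\ref{lem: RRR and riw} gives the quiver statement.

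\textbf{Step 3: all admissible chains.} Every admissible chain is reached from $\frakC_\calR$ by a sequence of box moves. I argue by induction on the number of moves. Suppose (b) holds for $\frakC$, and let $\frakc_k$ be movable.
\begin{itemize}
\item By Proposition~\ref{prop:boxmove_seed}, $\seed(\calB_k(\frakC))$ is $\mu_k$ or $\sigma_{k,k+1}$ applied to $\seed(\frakC)$.
\item By Theorem~\ref{thm: modified weave boxmove}, the same operation transforms $\Sigma(\fW_\rxw(\frakC))$ into $\Sigma(\fW_\rxw(\calB_k(\frakC)))$.
\item Mutation commutes with ring isomorphisms, and mutating $Q^{\rm op}$ at $k$ gives $(\mu_kQ)^{\rm op}$. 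The exchange relation is symmetric under reversing all arrows, so the new cluster variables agree.
\end{itemize}
Permutations are immediate, which completes the induction. Independence of $\rxw$ (for the $k=1$ case, which changes $\rxw$ to $\rxw'$) follows from Proposition~\ref{Prop: we seed}, since the weaves for different $\rxw$ are equivalent.
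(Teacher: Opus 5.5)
Your argument is correct, but it handles the base case differently from the paper.

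\textbf{How the paper proceeds.} The paper never computes the images of the elements $\mD(\{1,t])$. It defines $\varphi_{\uii}$ abstractly by matching the initial seeds $\seed(\frakC_\calR)$ and $\Sigma(\rind{\rxw\uii})$. This is legitimate because both rings are the localized cluster algebras of these seeds, the two quivers are opposite, and exchange relations are insensitive to reversing arrows. The paper then propagates (b) by box moves exactly as in your Step~3. Only afterwards does it obtain (a): for each $s$ it chooses a chain with $\frakc_1=[s,s]$, so that $\mD(\frakc_1)=\bFF_{\uii,s}$, while Lemma~\ref{lem:cluster_variables_modified_weave}(ii) gives $A_1(\fW_\rxw(\frakC))=z_s$.

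\textbf{How you proceed.} You build (a) and uniqueness into the definition and pay for it with an explicit computation. In effect you first prove Corollary~\ref{Cor: formular for M and C}, which the paper deduces from the theorem: $\mD_{\uii}[a,b]\mapsto\PM_{i_a}\bigl(B_{i_a}(z_a)\cdots B_{i_b}(z_b)\bigr)$ for every $i$-box. Your route establishes the isomorphism and the matching of localizations without using that each ring equals its cluster algebra, and it yields the minor formula for all $i$-boxes along the way. The paper's route avoids any Lie-theoretic identity and gets (a) for free from the weave combinatorics.

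\textbf{Carrying out Step~2.} Use only the partial-product version. The ratio version already fails for $\SL_2$ with $\uii=(1,1,1)$ and $[a,b]=[2,3]$, where it is not even a polynomial. Two facts are needed.
\begin{itemize}
\item The invariance $\PM_i(B_j(z)x)=\PM_i(x)=\PM_i(xB_j(z))$ for $j\ne i$. It holds because $x_j(z)$ and $\dot s_j$ fix $v_{\varpi_i}$ and its dual. This identifies $\PM_{i_t}(B_{i_1}(z_1)\cdots B_{i_t}(z_t))$ with the minor of the product starting at the first occurrence of $i_t$. It also shows the frozen variables $\mD[1(i)^+,r(i)^-]$ go exactly to $f_i(z)$. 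Finally, it lets you rewrite every term of \eqref{Def: minors} for a box $[a,b]$ through the common middle factor $Y=B_{i_{a+1}}(z_{a+1})\cdots B_{i_{b-1}}(z_{b-1})$.
\item The Fomin--Zelevinsky identity for $Y$ with $u=v=e$. After expanding the three minors $\PM_i(B_i(z_a)YB_i(z_b))$, $\PM_i(B_i(z_a)Y)$ and $\PM_i(YB_i(z_b))$ in the four minors of $Y$ with weights in $\{\varpi_i,s_i\varpi_i\}$, it gives
$\PM_i(B_i(z_a)YB_i(z_b))\,\PM_i(Y)=\PM_i(B_i(z_a)Y)\,\PM_i(YB_i(z_b))-\prod_{j\sim i}\PM_j(Y)$.
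This is the T-system with the correct sign.
\end{itemize}
Cancelling $\mD[a^+,b^-]$ in the induction is harmless because you work in a domain.
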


\begin{proof}
For an admissible chain  $\frakC = (\frakc_t)_{t\in [1,r]}$, we write  $\mD_t(\frakC) := \mD(\frakc_t)$ for $t\in [1,r]$ and 
$$ \mD(\frakC) = \{ \mD_t(\frakC) \}_{t\in [1,r]} \qtq \calA(\fW_\rxw(\frakC)) = \{ A_t(\fW_\rxw(\frakC)) \}_{t\in [1,r]}.
$$

Since the weave $\fW^T_\rxw( \frakC)$ has no 3-valent vertices, we have 
$
Q(\fW_\rxw( \frakC)) = Q(\fW^B_\rxw( \frakC))
$
as labeled quivers. Lemma \ref{lem: RRR and riw} says that 
\begin{align} \label{Eq: QCQX}
Q(\frakC_{\calR}) = Q(\fW_\rxw(\frakC_\calR))^{\rm op}.
\end{align}
Since both of $\tobbA_\C(\ttb)$ and $\C[X(\rxw \uii)]$ have cluster algebra structures with the initial quivers $Q(\frakC_{\calR})$ and $Q(\fW_\rxw(\frakC_\calR))$ respectively, 
and they are isomorphic to the localizations of the polynomial rings (with the same number of variables) by frozen variables (see \eqref{Eq: iso PA} and \eqref{Eq: 3 iso}), by \eqref{Eq: QCQX}, we have the algebra isomorphism
\[
\varphi_{\uii} \colon \tobbA_\C(\ttb) \xrightarrow{\ \sim\ } \C[X(\rxw \uii)]
\]
defined by  identifying the initial cluster variables, i.e., 
\[
\varphi_{\uii}\bigl(\mD_t(\frakC_{\calR})\bigr) = A_t(\fW_\rxw(\frakC_\calR)) \quad \text{for all } t \in [1,r].
\]
By construction, it is obvious that $\varphi_{\uii}$ is unique. 
Since any admissible chain $\frakC$ can be obtained from the chain $\frakC_{\calR}$ via a finite sequence of box moves,
Propositions~\ref{prop:boxmove_seed} and Theorem \ref{thm: modified weave boxmove} imply (b).

Let  $s\in [1,r]$, and choose an admissible chain $\frakC$ such that $\frakc_1=[s,s]$.
Since $\mD_1(\frakC)=\bFF_{\uii,s}$
by the definition \eqref{Def: minors},
Lemma~\ref{lem:cluster_variables_modified_weave}
\eqref{lem:cluster_variables_modified_weave (ii)}
gives (a).
\end{proof}

\begin{example}\label{ex:weave_tilde}
Let $\g$ be the simple Lie algebra of type $A_3$, and consider the following
$$
\rxw=(1,2,1,3,2,1) \qtq \uii = (2,3,1,2,2,1).
$$
\bnum
\item We consider the admissible chain $\frakC_\calR = (\frakc_t')_{t\in [1,6]}$ determined by the LR sequence $\frakH' = (\calR,\calR,\calR,\calR,\calR)$ as in Example \ref{ex: ibox exchange matrix} \eqref{ex: ibox exchange matrix (i)}. Then the quiver $Q(\frakC_\calR)$ and cluster variables $\mD(\frakC_\calR)$
are given in Example \ref{ex: ibox exchange matrix} \eqref{ex: ibox exchange matrix (i)} and Example \ref{ex: ibox seed} \eqref{ex: ibox seed (i)} respectively. Note that $\fW_\rxw(\frakC_\calR)  = \rind{\rxw\uii}$.
Theorem \ref{thm: main sec3} says that 
$$
\calA\bigl(\fW_\rxw(\frakC_\calR)\bigr)
=\varphi_{\uii}\bigl(\mD(\frakC_\calR)\bigr) \qtq
Q\bigl( \fW_\rxw(\frakC_\calR)\bigr) = Q(\frakC_\calR)^{\rm op}.
$$
Note that $\calA\bigl(\fW_\rxw(\frakC_\calR)\bigr)$ is computed explicitly in Example \ref{ex:initial_variables} and $Q\bigl( \fW_\rxw(\frakC_\calR)\bigr)$ is given in Example \ref{ex:initial_variables 1}.

\item 
We continue with Example~\ref{ex:double_weave}.
Let $\frakC = (\frakc_t)_{t\in [1,6]}$  be  the admissible chain determined by the LR sequence $\frakH = (\calL,\calR,\calL,\calR,\calR)$. Then the quiver $Q(\frakC)$ and cluster variables $\mD(\frakC)$
are given in Example \ref{ex: ibox exchange matrix} \eqref{ex: ibox exchange matrix (ii)} and Example \ref{ex: ibox seed} \eqref{ex: ibox seed (ii)} respectively.
We take the weave $\fW^{T}_\rxw(\frakC) : \rxw\,\uii \to \uii_\rxw(\frakC)$ in Figure \ref{fig:weave1} so that  the weave $\fW_\rxw(\frakC)$ can be obtained by concatenating the weaves $\fW^{T}_\rxw(\frakC)$ and $\fW^{B}_\rxw(\frakC)$ given in Figure~\ref{fig:weave_quiver}. Note that the quiver $Q(\fW_\rxw(\frakC)) = Q(\fW_\rxw^B(\frakC))$ is given in Figure~\ref{fig:weave_quiver}.
By Theorem \ref{thm: main sec3}, we have 
$$
\calA\bigl(\fW_\rxw(\frakC)\bigr) 
=\varphi_{\uii}\bigl(\mD(\frakC)\bigr) = (A_t)_{t\in [1,6]} \qtq
Q\bigl( \fW_\rxw(\frakC)\bigr) = Q(\frakC)^{\rm op},
$$
where the cluster variables $A_t$ are given as follows:
\bna
\item $A_1 = \varphi_{\uii} (\mD(\frakc_1)) = z_3$,
\item $A_2 = \varphi_{\uii} (\mD(\frakc_2)) = z_2$,
\item $A_3 = \varphi_{\uii} (\mD(\frakc_3)) = z_4$,
\item $A_4 = \varphi_{\uii} (\mD(\frakc_4)) = z_1z_4 - z_2z_3$,
\item $A_5 = \varphi_{\uii} (\mD(\frakc_5)) = z_1z_4z_5 - z_2z_3z_5 - z_1$,
\item $A_6 = \varphi_{\uii} (\mD(\frakc_6)) = z_3z_6 - z_4z_5 + 1$.
\ee

\ee
\end{example}

\begin{figure}[htbp] 
  \begin{tikzpicture}[scale=0.8]
	\draw [ultra thick] 
         (0/2,10) -- (0/2,9) -- (1/2,8.5) -- (1/2,8)
         (2/2,10) -- (2/2,9) -- (1/2,8.5)
         (5/2,10) -- (5/2,8)
         (8/2,10) -- (8/2,8)
         (11/2,10) -- (11/2,8)
         (1/2,8) -- (1/2,3)
         (8/2,8) -- (8/2,3)
         (11/2,8) -- (11/2,3)
         (5/2,8) -- (5/2,7.5) -- (4/2,7) -- (4/2,6) -- (3/2,5) -- (3/2,3.5) -- (2/2,3)
         (3/2,3.5) -- (4/2,3)
         (5/2,7.5) -- (6/2,7) -- (7/2,6) -- (7/2,3);
         \draw [ultra thick, color=purple] 
	(1/2,10) -- (1/2,8.5) -- (0/2,8)
	(1/2,8.5) -- (2/2,8)
	(4/2,10) -- (4/2,8)
	(6/2,10) -- (6/2,8)
	(9/2,10) -- (9/2,8)
	(10/2,10) -- (10/2,8)
	(0/2,8) -- (0/2,3)
	(9/2,8) -- (9/2,3)
	(10/2,8) -- (10/2,3)
	(2/2,8) -- (2/2,4) -- (3/2,3.5) -- (3/2,3)
	(4/2,8) -- (5/2,7.5) -- (5/2,4.5) -- (4/2,4) -- (3/2,3.5)
	(6/2,8) -- (5/2,7.5)
	(5/2,4.5) -- (6/2,4) -- (6/2,3);
	\draw [ultra thick, color=teal]
	(3/2,10) -- (3/2,9) -- (3/2,8)
	(7/2,10) -- (7/2,8)
	(3/2,8) -- (3/2,6) -- (4/2,5) -- (5/2,4.5) -- (5/2,3)
	(7/2,8) -- (7/2,7) -- (6/2,6) -- (6/2,5) -- (5/2,4.5) -- (5/2,3);
         \foreach \i/\j in {0/1, 1/2, 2/1, 3/3, 4/2, 5/1, 6/2, 7/3,8/1,9/2,10/2,11/1}{
         \node[above] at (\i/2,10) {$\j$};
         }
          \foreach \i/\j in  {0/2, 1/1, 2/1, 3/2, 4/1, 5/3, 6/2, 7/1,8/1,9/2,10/2,11/1}{
         \node[above] at (\i/2,2.3) {$\j$};
         }
  \end{tikzpicture}
    \caption{A weave $\fW^{T}_\rxw (\frakC)$  in Example~\ref{ex:weave_tilde}.}
    \label{fig:weave1}
\end{figure}

\newpage

\section{Connection to signed words}\label{sec: connection to signed words}

In this section, we review the connection between $i$-boxes and signed words studied in \cite{CQW26}, and obtain an explicit connection among $i$-boxes, weaves and signed words through the isomorphism given in Theorem~\ref{thm: main sec3}.

Let $\g$ be a simple Lie algebra associated with a Cartan matrix $\cmC = (c_{i,j})_{i,j\in I}$ of \emph{finite $ADE$} type. A \emph{signed word} is a sequence of pairs
\[
\uh = (\epsilon_1 h_1, \epsilon_2 h_2, \ldots, \epsilon_r h_r),
\]
where $h_t \in I$ and $\epsilon_t \in \{\pm 1\}$ for $1 \le t \le r$.  
Given a signed word $\uh=(\epsilon_t h_t)_{1\leq t \leq r}$, we define the index sets
\begin{align*}
\sfK(\uh) &:= \{1,2,\ldots,r \},\\
\sfK(\uh)^{\rm fr} &:= \{ j\in \sfK(\uh) \mid j_\bfh^+ = +\infty \}, \\
\sfK(\uh)^{\rm ex} &:= \sfK(\uh)\setminus \sfK (\uh)^{\rm fr},
\end{align*}
where $\bfh = ( h_1, h_2, \ldots, h_r )$.
For a signed word $\uh$, the corresponding exchange matrix $B(\uh) = (b_{j,t})_{j\in \sfK(\uh),\ t \in  \sfK(\uh)^{\rm ex}}$ is defined by 
\begin{equation*}
b_{j,t} = \begin{cases}
\epsilon_{t} & \text{if } t=j^+, \\
-\epsilon_{j} & \text{if } j=t^+, \\
\epsilon_{t}c_{h_j,h_t} & \text{if } \epsilon_{j^+}=\epsilon_{t} \text{ and } j<t<j^+<t^+, \\
\epsilon_{t}c_{h_j,h_t} & \text{if } \epsilon_{t}=-\epsilon_{t^+} \text{ and } j<t<t^+<j^+, \\
-\epsilon_{j}c_{h_j,h_t} & \text{if } \epsilon_{t^+}=\epsilon_{j} \text{ and } t<j<t^+<j^+, \\
-\epsilon_{j}c_{h_j,h_t} & \text{if } \epsilon_{j}=-\epsilon_{j^+} \text{ and } t<j<j^+<t^+, \\
0 & \text{otherwise}.
\end{cases}
\end{equation*}
We denote by $Q(\uh)$ the quiver associated with the exchange matrix $B(\uh)$.

Let $\ttb \in \Br^+$ and let  $\uii \in \Seq(\ttb)$. We consider an admissible chain $\frakC=(\frakc_t)_{1\le t \le r}$ of $i$-boxes associated with $\uii$, and denote by $\frakH=(\calH_1,\ldots,\calH_{r-1})$ the corresponding LR sequence.
The \emph{associated signed word} 
$$
\uh(\frakC):=(\epsilon_t h_t)_{1\leq t \leq r}
$$
is defined by
\[
h_t := \bcol(\frakc_t) \qtq
\epsilon_t :=
\begin{cases}
+1 & \text{if } t=1 \text{ or } \calH_{t-1}=\calL, \\
-1 & \text{if } \calH_{t-1}=\calR.
\end{cases}
\]
It was shown in \cite[Theorem 1.1]{CQW26} that the quivers 
$Q(\frakC)$ and $Q\bigl(\uh(\frakC)\bigr)$ coincide for an admissible chain $\frakC$ of $i$-boxes associated with $\uii \in \Seq(\ttb)$. 
Combining this with Theorem \ref{thm: main sec3}, we have an explicit connection among $i$-boxes, weaves and signed words. 
\begin{corollary}\label{cor:signed-word-weave-quiver}
Let \( \rxw \in \Seq(\Delta) \), and let  $\frakC$ be an admissible chain of $i$-boxes associated with $\uii \in \Seq(\ttb)$. Then 
we have 
\[
Q(\frakC) = Q\bigl(\uh(\frakC)\bigr)=Q\bigl(\fW_{\rxw}(\frakC)\bigr)^{\rm op}
\]
as labeled quivers.
\end{corollary}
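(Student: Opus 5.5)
The corollary is a direct concatenation of two equalities of labeled quivers, so the plan is simply to chain them.

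First, I would invoke \cite[Theorem 1.1]{CQW26}, recalled just before the statement. It gives $Q(\frakC)=Q\bigl(\uh(\frakC)\bigr)$ as labeled quivers, with vertex $t$ of $Q(\frakC)$ corresponding to the $i$-box $\frakc_t$ and to the $t$-th letter $\epsilon_t h_t$ of $\uh(\frakC)$. Before using this, I would check that the conventions agree. The index sets must match: $\sfK(\uh)^{\rm fr}$ is the set of $j$ with $j^+_{\bfh}=+\infty$, and this should coincide with $\sfK(\frakC)^{\rm fr}$ from \eqref{eq: frakC index}. The sign convention of $B(\uh)$ should also be the same as that of $B(\frakC)$. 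Both of these are part of the content of the cited theorem, so nothing new needs to be proved here.

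Second, I would apply Theorem~\ref{thm: main sec3}(b) to the same $\frakC$ and the fixed $\rxw\in\Seq(\Delta)$. It gives $Q\bigl(\fW_\rxw(\frakC)\bigr)=Q(\frakC)^{\rm op}$ as labeled quivers. Taking opposites of both sides, and using that $(Q^{\rm op})^{\rm op}=Q$ with the labeling and the frozen/mutable partition unchanged, yields $Q(\frakC)=Q\bigl(\fW_\rxw(\frakC)\bigr)^{\rm op}$.

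Combining the two equalities gives the chain in the statement. The only point requiring care, and the nearest thing to an obstacle, is the labeling. Both equalities must use the identification of vertex $t$ with $\frakc_t$. On the weave side, the trivalent vertices of $\fW_\rxw(\frakC)$ are indexed top to bottom by the non-additive steps of $\bfs_\rxw(\frakC)$, and the $t$-th non-additive step is $h_t\bfX_t$. This is exactly the labeling under which Theorem~\ref{thm: main sec3} is stated, so the identifications agree and the corollary follows.
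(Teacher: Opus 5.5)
Your proposal is correct and is exactly the paper's argument: the first equality is \cite[Theorem 1.1]{CQW26}, and the second is Theorem~\ref{thm: main sec3}(b) after passing to opposite quivers. Your labeling check is also right: the trivalent vertices are indexed by the non-additive steps $h_t\bfX_t$, which matches the identification of vertex $t$ with $\frakc_t$ on both sides.
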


\vskip 2em


\section{Connection to quantum affine algebras}\label{sec: connection to qaa}

Let $U_q'(\g)$ be a \emph{quantum affine algebra} over an algebraic closure of $\C(q)$, where $q$ is an indeterminate. We denote by $\Cg$ the category of finite-dimensional integrable $U_q'(\g)$-modules and by $\Cgz$ the \emph{Hernandez-Leclerc} category, which is a distinguished subcategory of $\Cg$ (see \cite{HL10, HL15}, \cite[Section 2]{KKOP24A} and \cite[Section 2.2]{KKOP25} for details).
To each $U_q'(\g)$, we associate a simply-laced finite simple Lie algebra $\gf$ as listed in Table~\ref{tab: g_sfg_corr} (see \cite{KKOP22} and see also \cite[Section 2]{KKOP25}). We use the notations $I_\fin$, $\cmC_\fin$, etc., for the Lie algebra $\gf$.

\begin{table}[h]
\centering
\begin{tabular}{c|c}
\hline
affine type  $\g$ & simply-laced  finite type $\gf$ \\ \hline
$A_1^{(1)}$ & $A_1$ \\ 
$A_{2n}^{(1)},\ A_{2n}^{(2)}$ & $A_{2n}\ (n\ge 1)$ \\ 
$A_{2n-1}^{(1)},\ A_{2n-1}^{(2)},\ B_n^{(1)}$ & $A_{2n-1}\ (n\ge 2)$ \\ 
$D_4^{(1)},\ D_4^{(2)},\ C_3^{(1)},\ D_4^{(3)},\ G_2^{(1)}$ & $D_4$ \\ 
$D_{n+1}^{(1)},\ D_{n+1}^{(2)},\ C_n^{(1)}$ & $D_{n+1}\ (n\ge 4)$ \\ 
$E_6^{(1)},\ E_6^{(2)},\ F_4^{(1)}$ & $E_6$ \\ 
$E_7^{(1)}$ & $E_7$ \\ 
$E_8^{(1)}$ & $E_8$ \\ \hline
\end{tabular}
\caption{Affine type $\g$ and simply-laced finite type $\gf$}
\label{tab: g_sfg_corr}
\end{table}

We take a \emph{complete duality datum} $\bbD=\{L_i^\bbD\}_{i \in I_\fin} \subset \Cgz$, which is the set of \emph{root modules} satisfying certain categorical conditions determined by $\cmC_\fin$ (see \cite[Definition 2.10]{KKOP25}).  Let $\hcalA$ be the bosonic extension associated with $\cmC_\fin$. It was shown in \cite[Proposition 3.18]{KKOP25} that there exists a homomorphism 
\[
\bPhi_\bbD \colon \hcalA_\Zq \to K(\Cgz)    
\]
satisfying $\bPhi_\bbD(f_{i,p}) = \bigl[\rdual^p(L_i^\bbD)\bigr]$ for any $(i,p) \in I_\fin \times \Z$ and $\bPhi_\bbD(q^{1/2}) = 1$, where $\rdual$ is the right dual functor in $\Cgz$.
Specializing $\obbA := \hcalA_\Zq / (q^{1/2}-1) \hcalA_\Zq$,
\footnote{$\obbA$ and $\obbA(\ttb)$ are denoted by  $^{\circ}\obbA$ and $^{\circ}\obbA(\ttb)$ respectively in \cite{KKOP25}.}
we obtain the induced isomorphism 
\[
{^\circ}\bPhi_\bbD \colon \obbA \to K(\Cgz)    
\]
(see \cite[Theorem 3.19]{KKOP25}).

We denote by $\Br$ the Braid group associated with $\cmC_\fin$. Let $\ttb \in \Br^+$ and let 
$$
\uii = (i_1, i_2, \ldots, i_r)  \in \Seq(\ttb).
$$ 
For each $t\in [1,r]$, we consider the \emph{affine cuspidal module} $C^{\bbD,\uii}_t$ defined in \cite[(4.4)]{KKOP25}.  For an $i$-box $[a,b]$ of $\uii$, we define the \emph{affine determinantial module} $M^{\bbD,\uii}[a,b]$ by
    \[
    M^{\bbD,\uii}[a,b] := \head\left(C^{\bbD,\uii}_{b} \tens C^{\bbD,\uii}_{b^-} \tens \cdots \tens C^{\bbD,\uii}_{a^+} \tens C^{\bbD,\uii}_a\right)
    \]
(see \cite[Definition 4.8]{KKOP25}). The affine determinantial modules are real simple modules in $\Cgz$ and satisfy the $T$-system relations.
We simply write $C^{\uii}_{t}$ and $ M^{\uii}[a,b]$ for $C^{\bbD,\uii}_{t}$ and $ M^{\bbD,\uii}[a,b]$ respectively if no confusion arises. 

\begin{theorem}[{\cite[Theorem 5.15]{KKOP25}}] \label{thm:Tsystem}
For any $i$-box $[a,b]$ of $\uii$, there exists the following short exact sequence
$$ 
0 \to  \bigotimes_{ \substack{ j \in I_\fin \\ (\al_{i_a},\al_j)=-1}}
M^{\uii}[a(j)^+,b(j)^-]
\to
M^{\uii}[a^+,b] \tens M^{\uii}[a,b^-]
\to
M^{\uii}[a,b]\tens M^{\uii}[a^+,b^-]
\to 0.
$$
\end{theorem}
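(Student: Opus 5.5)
The plan is to prove the sequence first in the Grothendieck ring, then show the middle term has exactly the two composition factors indicated, sitting in the right order.

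\textbf{Step 1: the identity in $K(\Cgz)$.} I would not use the relation \eqref{Def: minors}: that is the definition of $\mD_{\uii}[a,b]$, so invoking it would be circular. Instead I would work at the quantum level.
\begin{itemize}
\item Define quantum unipotent-minor analogues $\mathsf{D}^q_{\uii}[a,b]\in\hcalA(\ttb)$ as suitably normalized ordered products of PBW vectors twisted by the braid symmetries $\TT_i$.
\item Show they satisfy the quantum $T$-system. Via the automorphisms $\TT_{i_1}\cdots\TT_{i_{a-1}}$ and the translation $f_{i,p}\mapsto f_{i,p+1}$, the box $[a,b]$ can be moved so that the relevant window of $\uii$ becomes (a prefix of) a reduced word for a Weyl group element inside one copy of $\calU_q^-(\g)$, i.e.\ $\hcalA_{\ge0}\cap\hcalA_{<1}$. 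There the identity is the quantum $T$-system among unipotent quantum minors, known from GLS and Kashiwara--Kim--Oh--Park.
\item Apply $\bPhi_\bbD$. Using $\bPhi_\bbD(\FF_{\uii,t})=[C^{\uii}_t]$ and the head description of $M^{\uii}[a,b]$ (the dual-PBW/head compatibility of \cite{KKOP25}), this gives
$$[M^{\uii}[a^+,b]][M^{\uii}[a,b^-]]=[M^{\uii}[a,b]][M^{\uii}[a^+,b^-]]+\prod_{j}[M^{\uii}[a(j)^+,b(j)^-]].$$
\end{itemize}

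\textbf{Step 2: head and socle of the middle term.} I would use the invariants $\Lambda$, $\tilde\Lambda$ and $\mathfrak d$ for real simple modules.
\begin{itemize}
\item Show $M^{\uii}[a^+,b]$ and $M^{\uii}[a,b^-]$ are real and that $\mathfrak d\bigl(M^{\uii}[a^+,b],M^{\uii}[a,b^-]\bigr)=1$. Then $M^{\uii}[a^+,b]\tens M^{\uii}[a,b^-]$ has simple head and simple socle, and these are non-isomorphic.
\item The boxes $[a,b]$ and $[a^+,b^-]$ are nested, so their determinantial modules commute. Hence $M^{\uii}[a,b]\tens M^{\uii}[a^+,b^-]$ is simple, and it appears as the head. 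This follows by writing both sides as heads of ordered tensor products of cuspidals and using associativity of heads.
\item The factors $M^{\uii}[a(j)^+,b(j)^-]$ have pairwise distinct colors adjacent to $i_a$ and are pairwise commuting, so their tensor product is simple.
\end{itemize}

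\textbf{Step 3: length two.} Given Step 1 and the simplicity statements in Step 2, the middle term has exactly the two composition factors shown. Since the head is the stated quotient, the kernel is the remaining simple factor, which is the required sequence.

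The main obstacle is the computation $\mathfrak d=1$ in Step 2. I would first try the same braid reduction as in Step 1. The right-dual functor $\rdual$ and the categorical braid-group action on $\Cgz$ (via the duality datum $\bbD$) transport $\Lambda$-invariants. This reduces to the case where $\uii$ locally comes from a $Q$-datum adapted word, in which the $M^{\uii}[a,b]$ are Kirillov--Reshetikhin modules. There the statement is the classical $T$-system of Nakajima and Hernandez, for which $\mathfrak d=1$ is known. If that transport fails, the fallback is to compute $\Lambda$ directly from the additivity formula for $\Lambda$ over the cuspidal decomposition, $\Lambda(M,C_t)=\sum_s\Lambda(C_s,C_t)$ along the ordered product.
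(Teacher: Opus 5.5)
The paper gives no proof of this statement; it imports it from \cite{KKOP25}. In the framework the paper relies on, the logic runs opposite to yours: the exact sequence is established in $\Cgz$, and the recursion \eqref{Def: minors} defining $\mD_{\uii}[a,b]$ is then read off from it (see the remark after \eqref{Def: minors} and Proposition~\ref{cor: K simeq oA}). So your argument has to stand on its own. Your scheme (Grothendieck identity, plus simplicity of the outer terms, plus identification of the head) works when the window $(i_a,\dots,i_b)$ is a reduced word. The problem is the reduction that is supposed to bring every box to that case.

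\textbf{The gap in Step 1.} Applying $(\TT_{i_1}\cdots\TT_{i_{a-1}})^{-1}$ and the translation $f_{i,p}\mapsto f_{i,p+1}$ lets you assume $a=1$. It does not change the window $(i_a,\dots,i_b)$. For an arbitrary expression of $\ttb\in\Br^+$, that window is in general not a reduced word of any Weyl group element. Its PBW vectors $\TT_{i_a}\cdots\TT_{i_{t-1}}(f_{i_t,0})$ then lie in several copies of $\calU_q^-(\g)$ at once, and no translation moves them into one.

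The paper's running example shows this. Take $\uii=(2,3,1,2,2,1)$ and the box $[4,5]$. One has $\FF_{\uii,5}=q^{1/2}\TT_2\TT_3\TT_1(f_{2,1})$, so $C^{\uii}_5\simeq\rdual(C^{\uii}_4)$. The statement to be proved is $0\to\bone\to C^{\uii}_5\otimes C^{\uii}_4\to M^{\uii}[4,5]\to 0$, i.e.\ $\bFF_4\bFF_5=\mD[4,5]+1$. After undoing $\TT_2\TT_3\TT_1$, this is the boson relation $f_{2,0}f_{2,1}=q^{2}f_{2,1}f_{2,0}+(1-q^2)$ between adjacent copies. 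Its constant term cannot occur in the $Q_-$-graded algebra $\calU_q^-(\g)$, so it is not an instance of the GLS/KKKO $T$-system for unipotent quantum minors. You therefore have no argument for the Grothendieck-ring identity in exactly the cases that go beyond $A_q(\mathfrak n(w))$, and that identity carries the whole proof.

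The same example defeats your fallback for Step 2. With the duality datum of the paper's last example, $M^{\uii}[4,5]\simeq L(Y_{2,5}Y_{2,9})$, which is not a Kirillov--Reshetikhin module. You also give no mechanism by which the determinantial modules of a fixed box are carried along braid moves to a $Q$-datum adapted word. What is missing is a $T$-system for non-reduced windows in which the duality $\rdual$ (the source of the $\bone$ above) enters essentially. That could be an identity in $\hcalA$ that genuinely uses the boson relations, or a direct categorical identification of head, socle and length.

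\textbf{Secondary points.}
\begin{itemize}
\item If $\mathsf D^q_{\uii}[a,b]$ is literally a normalized ordered product of PBW vectors, its image under $\bPhi_\bbD$ is the class of the whole tensor product of the cuspidal modules, not of its head. You need the global-basis element singled out by the head.
\item You then need a proof that $\bPhi_\bbD$ sends that element to $[M^{\uii}[a,b]]$ which does not go through Proposition~\ref{cor: K simeq oA}(i). As the paper presents it, that proposition rests on the $T$-system, so citing it here would be circular.
\item Once the Grothendieck identity and the simplicity of the two outer terms are known, length two is automatic. So $\mathfrak d=1$, which you single out as the main obstacle, is not needed.
\item What is needed, and only asserted, are: the commutation of the nested modules $M^{\uii}[a,b]$ and $M^{\uii}[a^+,b^-]$; the pairwise commutation of the $M^{\uii}[a(j)^+,b(j)^-]$; and a surjection $M^{\uii}[a^+,b]\otimes M^{\uii}[a,b^-]\twoheadrightarrow M^{\uii}[a,b]\otimes M^{\uii}[a^+,b^-]$. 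These require $\Lambda$-invariant arguments, not just associativity of heads.
\end{itemize}
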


 We define $\scrC_\g^{\bbD} (\ttb)$ to be the smallest full subcategory of $\Cgz$ such that 
 \bna
 \item $\scrC_\g^{\bbD} (\ttb)$ contains all affine cuspidal modules $\{ C_t^{\bbD,\uii} \}_{1 \le t \le r}$ and the trivial module $\bone$,
 \item $\scrC_\g^{\bbD} (\ttb)$ is stable under taking tensor products, subquotients, and extensions.
\ee 
The category $\scrC_\g^{\bbD} (\ttb)$ provides a monoidal categorification of $\obbA(\ttb)$ using the affine determinantial modules (\cite{KKOP25}).

\begin{proposition}[{\cite[Corollary 5.35, Theorem 9.4]{KKOP25}}] \label{cor: K simeq oA}
Let $\ttb \in \Br^+$ and let $\uii \in \Seq(\ttb)$.
\bnum
\item There is a unique algebra isomorphism 
$$
K(\scrC_\g^{\bbD}(\ttb)) \simeq \obbA(\ttb)
$$
such that
\begin{align*}
[C^{ \uii}_t] &\mapsto \bFF_{\uii, t} \qquad\qquad \text{ for any $t\in [1,r]$,} \\
[M^{ \uii}[a,b]]  &\mapsto \mD_{\uii}[a,b] \ \qquad \text{ for any $i$-box $[a,b]$ of $\uii$,}
\end{align*}
where $\bFF_{\uii, t}$ is the PBW vector defined in \eqref{Eq: PBW vector}, and $\mD_{\uii}[a,b]$ is given in \eqref{Def: minors}.
\item For each admissible chain $\frakC = (\frakc_t)_{t\in [1,r]}$ of  $i$-boxes associated with $\uii$, the pair 
$$\seed(\frakC) = ( \{ M^{ \uii}(\frakc_t) \}_{t\in [1,r]} , B(\frakC))
$$
forms an admissible monoidal seed for $\scrC_\g^{\bbD}(\ttb)$, which gives a categorical cluster algebra structure of $\scrC_\g^{\bbD}(\ttb)$. Therefore, $\scrC_\g^{\bbD} (\ttb)$ provides a monoidal categorification of $\obbA(\ttb)$.
\ee
\end{proposition}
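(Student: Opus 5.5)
This proposition is cited from \cite[Corollary 5.35, Theorem 9.4]{KKOP25}. The plan is therefore to assemble it from the homomorphism ${^\circ}\bPhi_\bbD$ and the results recalled above, rather than to reprove it from scratch.

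For (i), restrict the isomorphism ${^\circ}\bPhi_\bbD\colon \obbA \to K(\Cgz)$ to the subalgebra $\obbA(\ttb)$. First we check that ${^\circ}\bPhi_\bbD(\bFF_{\uii,t}) = [C^{\uii}_t]$. This should follow because the affine cuspidal modules in \cite[(4.4)]{KKOP25} are defined by applying the categorical braid actions (reflection functors) $\mathcal{T}_{i_1}\cdots\mathcal{T}_{i_{t-1}}$ to $L^\bbD_{i_t}$, mirroring $\FF_{\uii,t}=q^{1/2}\TT_{i_1}\cdots\TT_{i_{t-1}}(f_{i_t,0})$. The compatibility of $\bPhi_\bbD$ with the two braid actions then gives the claim.

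Since $\obbA(\ttb)$ is the polynomial ring in the $\bFF_{\uii,t}$, its image is the subring generated by the classes $[C^\uii_t]$. We show this subring equals $K(\scrC^\bbD_\g(\ttb))$. Every simple object of $\scrC^\bbD_\g(\ttb)$ is the head of an ordered tensor product of cuspidals (a PBW-type classification). Unitriangularity of such heads against standard products then shows that the classes of ordered monomials span $K(\scrC^\bbD_\g(\ttb))$. Injectivity is inherited from ${^\circ}\bPhi_\bbD$.

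Next we treat the determinantial modules, by induction on the length of the $i$-box $[a,b]$. The base case is $M^\uii[k]=C^\uii_k$. For the inductive step, take classes in the short exact sequence of Theorem~\ref{thm:Tsystem}. This gives exactly the recursion \eqref{Def: minors} for the classes, with the minus sign arising from moving the kernel term to the other side. Because $\mD_\uii[a^+,b^-]$ is a nonzero element of a domain, the recursion determines $\mD_\uii[a,b]$ uniquely. Hence $[M^\uii[a,b]]\mapsto \mD_\uii[a,b]$. Uniqueness of the isomorphism is clear, since its values on the generators are prescribed.

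For (ii), the admissibility of the monoidal seed is the real content: the $M^\uii(\frakc_t)$ are real, they strongly commute pairwise, and the first-step mutations are realized by short exact sequences. For this we invoke \cite[Theorem 9.4]{KKOP25} directly. The key inputs are the following.
\begin{enumerate}[{\rm (1)}]
\item The determinantial modules in a chain form a commuting family, which is a $\Lambda$-invariant computation.
\item The box moves of Proposition~\ref{prop:boxmove_seed} are realized categorically by the T-system of Theorem~\ref{thm:Tsystem}.
\end{enumerate}
Combined with (i) and Theorem~\ref{thm: mCat of cluster}, this gives the monoidal categorification.

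The main obstacle is the spanning and surjectivity part of (i): showing that the subcategory generated by the cuspidals has Grothendieck ring no larger than the polynomial ring. I would first try to deduce this from the PBW classification of simples in $\scrC^\bbD_\g(\ttb)$ by heads of ordered cuspidal products.
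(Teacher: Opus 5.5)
Your proposal is sound and matches the paper's treatment. The paper gives no independent argument and simply imports the result from \cite[Corollary 5.35, Theorem 9.4]{KKOP25}, and your outline rests on exactly the inputs that reference provides: the identification $[C^{\uii}_t]={^\circ}\bPhi_\bbD(\bFF_{\uii,t})$, spanning of $K(\scrC_\g^{\bbD}(\ttb))$ by ordered cuspidal monomials via the PBW-type classification, the $T$-system induction for $M^{\uii}[a,b]$ using that $\obbA(\ttb)$ is a domain, and Theorem 9.4 for admissibility. One small precision: injectivity on $\obbA(\ttb)$ is not automatically inherited from ${^\circ}\bPhi_\bbD$, because it needs the map $\obbA(\ttb)\to\obbA$ to remain injective after specialization; it does, however, follow from the linear independence of the classes of ordered cuspidal monomials that your PBW-type classification, with its unitriangularity, already provides.
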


Combining this with Theorem~\ref{thm: main sec3}, we realize the cluster structure on $\C[X(\Delta \uii)]$ at the level of the monoidal category $\scrC_\g^{\bbD}(\ttb)$.

\begin{corollary}
Let $\ttb \in \Br^+$ and let $\uii \in \Seq(\ttb)$. Then the category $\scrC_\g^{\bbD}(\ttb)$ provides a monoidal categorification of the partially compactified cluster algebra of the coordinate ring $\C[X(\Delta \uii)]$ (see Definition \ref{Def: cluster alg} \eqref{Def: cluster alg (ii)}).
Moreover, the affine cuspidal modules $C^{ \uii}_t$ categorify the coordinates $z_t \in \C[X(\Delta \uii)]$ under this categorification. 
\end{corollary}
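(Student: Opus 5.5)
The corollary is obtained by composing the isomorphism of Theorem~\ref{thm: main sec3} with the categorification of Proposition~\ref{cor: K simeq oA}.

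First, we fix $\rxw \in \Seq(\Delta)$ and identify $X(\Delta\uii)$ with $X(\rxw\uii)$; different expressions of $\Delta\ttb$ give isomorphic braid varieties. By Proposition~\ref{cor: K simeq oA}(i), after tensoring with $\C$ we get an isomorphism $\C\otimes_\Z K(\scrC_\g^{\bbD}(\ttb)) \simeq \obbA_\C(\ttb)$ sending $[C^\uii_t]\mapsto \bFF_{\uii,t}$ and $[M^\uii[a,b]]\mapsto \mD_\uii[a,b]$. Composing with the inclusion $\obbA_\C(\ttb)\hookrightarrow\tobbA_\C(\ttb)$ and with $\varphi_\uii$ gives an injective algebra map
\[
\Psi\colon \C\otimes_\Z K(\scrC_\g^{\bbD}(\ttb)) \to \C[X(\rxw\uii)],
\]
with $\Psi([C^\uii_t]) = z_t$ by Theorem~\ref{thm: main sec3}(a). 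This already gives the second assertion.

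Next, we identify the image of $\Psi$ with the partially compactified cluster algebra $\ccA(\Sigma(\fW_\rxw(\frakC)))$ for any admissible chain $\frakC$.
\begin{itemize}
\item By Theorem~\ref{thm: mCat of cluster}, $\obbA_\C(\ttb) \simeq \ccA(\seed(\frakC))$.
\item By Theorem~\ref{thm: main sec3}(b), $\varphi_\uii$ sends the cluster $\mD(\frakC)$ to $\calA(\fW_\rxw(\frakC))$, and the quiver $Q(\frakC)$ to $Q(\fW_\rxw(\frakC))^{\rm op}$.
\item The seeds $(\mathbf{x},B)$ and $(\mathbf{x},-B)$ generate the same cluster algebra: mutation commutes with $B\mapsto -B$, and the exchange relation is symmetric under swapping the two monomials. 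So passing to the opposite quiver changes neither the set of cluster variables nor the frozen polynomial ring $\overline{\mathbb{P}}$.
\item Frozen vertices match: $\varphi_\uii$ is a ring isomorphism onto $\C[X(\rxw\uii)]$ taking the frozen variables of $\seed(\frakC)$ to those of $\Sigma(\fW_\rxw(\frakC))$, since the labeled quivers agree up to orientation.
\end{itemize}
Together these give $\varphi_\uii(\obbA_\C(\ttb)) = \ccA(\Sigma(\fW_\rxw(\frakC)))$.

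Finally, Proposition~\ref{cor: K simeq oA}(ii) says that $\{M^\uii(\frakc_t)\}$ with $B(\frakC)$ is an admissible monoidal seed, so cluster monomials of $\seed(\frakC)$ are classes of real simple modules. Transporting along $\Psi$, the cluster monomials of $\Sigma(\fW_\rxw(\frakC))$ are classes of real simples: the initial cluster variables $A_t(\fW_\rxw(\frakC))$ correspond to the affine determinantial modules $M^\uii(\frakc_t)$. The opposite-quiver convention only reverses the sign of $B$, which is harmless for the notion of monoidal categorification up to this standard identification.

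The only step needing care is this sign and opposite-quiver bookkeeping, together with checking that "partially compactified" matches. On the bosonic side the frozen variables are not inverted in $\obbA(\ttb)$, while $\C[X]$ inverts them, so the categorified object is the subalgebra $\varphi_\uii(\obbA_\C(\ttb))$, and this is exactly $\ccA$.
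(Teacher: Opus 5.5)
Your proposal is correct and takes the same approach as the paper. The paper obtains the corollary by composing the categorification $K(\scrC_\g^{\bbD}(\ttb)) \simeq \obbA(\ttb)$ of Proposition~\ref{cor: K simeq oA} with the isomorphism $\varphi_{\uii}$ of Theorem~\ref{thm: main sec3}; your argument spells out the bookkeeping the paper leaves implicit, namely that $\varphi_{\uii}(\obbA_\C(\ttb)) = \ccA(\Sigma(\fW_\rxw(\frakC)))$ and that passing to the opposite quiver changes neither the cluster variables nor the cluster monomials.
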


The isomorphism $\varphi_{\uii}$ given in Theorem~\ref{thm: main sec3} together with Proposition \ref{prop: conf variables} yields an explicit formula for the decomposition of $[M^{ \uii}[a,b]]$ in terms of $[C_{t}^{ \uii}]$ in the Grothendieck ring $K(\Cg^{\bbD}(\ttb))$.

\begin{corollary} \label{Cor: formular for M and C}
For any $\uii \in \Seq(\ttb)$ and an $i$-box $[a,b]$ of $\uii$, we have  
\[
[M^{ \uii}[a,b]]  = \triangle_{i_a}\bigl(B_{i_a}([C_{a}^{ \uii}])\, B_{i_{a+1}}( [C_{a+1}^{ \uii}]) \cdots B_{i_b}( [C_{b}^{ \uii}])\bigr),
\]
where $\triangle_{i}$ denotes the generalized principal minor associated to the fundamental weight $\varpi_{i}$, and $B_i(z)$ is defined as in \eqref{eq: B_i(z)}.
\end{corollary}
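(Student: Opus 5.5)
The approach is to reduce the formula to the right inductive weave, where the cluster variables are known generalized principal minors, and then transport it through $\varphi_{\uii}$ and the categorification isomorphism of Proposition~\ref{cor: K simeq oA}.

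\emph{Reduction to a prefix-free case.} Fix an $i$-box $[a,b]$ of $\uii$ and consider the subword $\uii' := (i_a, i_{a+1}, \ldots, i_b)$, an expression of some $\ttb' \in \Br^+$. I expect the PBW vectors of $\uii$ with indices in $[a,b]$ to be compatible with those of $\uii'$, up to the braid automorphism $\TT_{i_1}\cdots\TT_{i_{a-1}}$. Concretely, $\TT_{i_1}\cdots \TT_{i_{a-1}}$ should send $\FF_{\uii',t-a+1}$ to $\FF_{\uii,t}$ for $t\in[a,b]$. Since the $T$-system \eqref{Def: minors} involves only indices in $[a,b]$, this automorphism would then send $\mD_{\uii'}[1,b-a+1]$ to $\mD_{\uii}[a,b]$. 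The same shift statement holds on the categorical side, where $M^{\uii}[a,b]$ is the head of a tensor product of the $C^{\uii}_t$ for $t\in[a,b]$. It is cleaner, however, to argue directly in the polynomial ring: $\mD_{\uii}[a,b]$ is a polynomial $P_{[a,b]}$ in $\bFF_{\uii,a},\ldots,\bFF_{\uii,b}$. Because the recursion \eqref{Def: minors} depends only on the color pattern of $\uii$ restricted to $[a,b]$, this polynomial coincides with $P_{[1,b-a+1]}$ computed for $\uii'$.

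\emph{Computation for $[1,m]$.} For $\uii'$, with $m=b-a+1$, the $i$-box $[1,m]=\{1,m]$ is the last box of color $i_a$ in the chain $\frakC_\calR$ for $\uii'$. The steps are as follows.
\begin{enumerate}
\item By Theorem~\ref{thm: main sec3}(b) applied to $\frakC_\calR$ and Lemma~\ref{lem: RRR and riw}, $\varphi_{\uii'}(\mD_{\uii'}[1,m]) = A_m(\rind{\rxw\uii'})$.
\item By Proposition~\ref{prop: conf variables}, $A_m(\rind{\rxw\uii'}) = \PM_{i_a}(B_{i_a}(z_1)\cdots B_{i_b}(z_m))$.
\item By Theorem~\ref{thm: main sec3}(a), $\varphi_{\uii'}(\bFF_{\uii',t})=z_t$.
\end{enumerate}
Since $\varphi_{\uii'}$ is injective and the $\bFF_{\uii',t}$ are algebraically independent (the polynomial-ring statement), we get the polynomial identity
\[
P_{[1,m]}(x_1,\ldots,x_m)=\PM_{i_a}\bigl(B_{i_a}(x_1)\cdots B_{i_b}(x_m)\bigr)
\]
in $\C[x_1,\ldots,x_m]$.

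\emph{Conclusion.} Substituting $x_t=\bFF_{\uii,a+t-1}$ gives
\[
\mD_{\uii}[a,b]=\PM_{i_a}\bigl(B_{i_a}(\bFF_{\uii,a})\cdots B_{i_b}(\bFF_{\uii,b})\bigr)
\]
in $\obbA(\ttb)$. Applying the inverse of the isomorphism in Proposition~\ref{cor: K simeq oA}(i), which sends $\bFF_{\uii,t}\mapsto[C_t^{\uii}]$ and $\mD_{\uii}[a,b]\mapsto[M^{\uii}[a,b]]$, yields the stated formula. This step is legitimate because the right-hand side is a polynomial in commuting variables with integer coefficients.

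\emph{Main obstacle.} The main obstacle is the claim that $P_{[a,b]}$ depends only on the colors $i_a,\ldots,i_b$. The recursion \eqref{Def: minors} uses $a(j)^+$ and $b(j)^-$, which stay in $[a,b]$ whenever they are finite; when $a(j)^+>b(j)^-$ the term is $\mD[\emptyset]=1$. So an induction on $b-a$ should settle it, with the caveat that the division in \eqref{Def: minors} is by a nonzero element of a domain. The only care needed is that $[a,b]$ is an $i$-box, so that $[1,m]$ is an $i$-box of $\uii'$ of the same color.
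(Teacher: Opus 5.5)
Your proposal is correct and follows the paper's proof. In both, you reduce to an interval starting at $1$ by observing that the $T$-system recursion only involves the subword $(i_a,\ldots,i_b)$, and then obtain the case $a=1$ from the right inductive weave using Proposition~\ref{prop: conf variables}, Theorem~\ref{thm: main sec3} and Proposition~\ref{cor: K simeq oA}. The only cosmetic difference is that you carry out the reduction in $\obbA(\ttb)$, via the recursion \eqref{Def: minors} and an explicit algebraic-independence argument, whereas the paper does it in the Grothendieck ring via Theorem~\ref{thm:Tsystem}; the two are equivalent under Proposition~\ref{cor: K simeq oA}.
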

\begin{proof}
By the \(T\)-system in Theorem~\ref{thm:Tsystem}, together with the initial
conditions \([M^{\uii}[k,k]]=[C_k^{\uii}]\), the class
\([M^{\uii}[a,b]]\) is determined by the subword \((i_a,\ldots,i_b)\) and the
classes \([C_a^{\uii}],\ldots,[C_b^{\uii}]\) through the same recursion as in
the case of an interval starting at \(1\).  Thus, after relabeling the interval
\([a,b]\), it suffices to prove the case \(a=1\), which follows from
Proposition~\ref{prop: conf variables}, Theorem~\ref{thm: main sec3} and Proposition~\ref{cor: K simeq oA}.
\end{proof}

\begin{example}
Assume that $\g$ is of affine type $A_3^{(1)}$. 
For the notation used in this example, we refer the reader to \cite[Section 8]{KKOP25A} and \cite[Section 5]{HL10}.
Then $\gf$ is of finite type
$A_3$, and the corresponding Lie group is $\SL_4(\C)$. We set
\[
  \uii=(2,3,1,2,2,1).
\]
Let \(L(m)\) denote the simple module with dominant monomial
\(m\), written in the variables \(Y_{i,p}= Y_{i, (-q)^p}\).
Take the
complete duality datum
\[
  \bbD=\{L^{\bbD}_1=L(Y_{3,6}),\quad
  L^{\bbD}_2=L(Y_{2,3}),\quad
  L^{\bbD}_3=L(Y_{1,6})\}
\]
in \(\Cgz\).  For this choice, the affine cuspidal modules attached to \(\uii\)
are
\[
\begin{array}{lll}
  C_1^{\uii}=L(Y_{2,3}),&
  C_2^{\uii}=L(Y_{3,4}),&
  C_3^{\uii}=L(Y_{1,4}),\\[2mm]
  C_4^{\uii}=L(Y_{2,5}),&
  C_5^{\uii}=L(Y_{2,9}),&
  C_6^{\uii}=L(Y_{1,6}Y_{2,9}).
\end{array}
\]

Consider the \(i\)-box \(\frakc=[3,6]\), and set 
$C_t:=[C_t^{\uii}]$ for $1\le t\le 6$,  
\[
  M:=[M^{\uii}[3,6]]= [ \hd(C_6^{\uii} \tens C_3^{\uii}) ] = [L(Y_{1,4} Y_{1,6} Y_{2,9} )].
\]
The formula in Corollary~\ref{Cor: formular for M and C} gives
\[
  M=
  \triangle_{1}\Bigl(
    B_{1}(C_3)\,B_{2}(C_4)\,B_{2}(C_5)\,B_{1}(C_6)
  \Bigr).
\]
Here
\[
B_{1}(C_3)=
\begin{pmatrix}
C_3&-1&0&0\\
1&0&0&0\\
0&0&1&0\\
0&0&0&1
\end{pmatrix},\quad
B_{2}(C_4)=
\begin{pmatrix}
1&0&0&0\\
0&C_4&-1&0\\
0&1&0&0\\
0&0&0&1
\end{pmatrix},
\]
and
\[
B_{2}(C_5)=
\begin{pmatrix}
1&0&0&0\\
0&C_5&-1&0\\
0&1&0&0\\
0&0&0&1
\end{pmatrix},\quad
B_{1}(C_6)=
\begin{pmatrix}
C_6&-1&0&0\\
1&0&0&0\\
0&0&1&0\\
0&0&0&1
\end{pmatrix}.
\]
A direct multiplication yields
\[
B_{1}(C_3)\,B_{2}(C_4)\,B_{2}(C_5)\,B_{1}(C_6)
=
\begin{pmatrix}
C_3C_6-C_4C_5+1 & -C_3 & C_4&0\\
C_6&-1&0&0\\
C_5&0&-1&0\\
0&0&0&1
\end{pmatrix}.
\]
Since \(\triangle_1\) is the \(1\times 1\) principal minor, we obtain
\[
  M=C_3C_6-C_4C_5+1.
\]

This identity can be used to compute the dimension of \(M^{\uii}[3,6]\).  
Since \(C_3^{\uii}\), \(C_4^{\uii}\), and \(C_5^{\uii}\) are fundamental modules with
\[
  \dim C_3^{\uii}=4,\quad
  \dim C_4^{\uii}=6,\quad
  \dim C_5^{\uii}=6.
\]
The short exact sequence
\[
  0\to L(Y_{3,8})
  \to L(Y_{2,9})\tens L(Y_{1,6})
  \to L(Y_{1,6}Y_{2,9})
  \to 0
\]
gives \(\dim C_6^{\uii}=6\cdot 4-4=20\).
Taking dimensions in the identity
\(M=C_3C_6-C_4C_5+1\), we get
\[
  \dim M
  =
  \dim C_3^{\uii}\dim C_6^{\uii}
  -
  \dim C_4^{\uii}\dim C_5^{\uii}
  +
  \dim \bone
  =
  4\cdot 20-6\cdot 6+1
  =
  45.
\]
We also remark that applying \(q\)-characters to the identity
\(M=C_3C_6-C_4C_5+1\) gives the \(q\)-character of \(M\) from the
\(q\)-characters of the modules \(C_t^{\uii}\).
\end{example}

\vskip 2em 

\providecommand{\bysame}{\leavevmode\hbox to3em{\hrulefill}\thinspace}
\providecommand{\MR}{\relax\ifhmode\unskip\space\fi MR }
\providecommand{\MRhref}[2]{%
	\href{http://www.ams.org/mathscinet-getitem?mr=#1}{#2}
}
\providecommand{\href}[2]{#2}

\end{document}